\documentclass[11pt]{article}
\usepackage{authblk}
\usepackage{setspace}
\usepackage[utf8]{inputenc}

\usepackage{graphicx} 
\graphicspath{ {./images/} }
\usepackage[rightcaption]{sidecap}
\usepackage{wrapfig}
\usepackage{tikz}
\usepackage[skip=4pt]{caption}
\usepackage[flushleft]{threeparttable} 
\usepackage{bm} 
\usepackage{bbm}
\usepackage{amsmath} 
\usepackage{amsthm}
\usepackage{amssymb}
\usepackage{pdflscape}
\usepackage{array}
\usepackage{url}
\usetikzlibrary{positioning, shapes.geometric, arrows.meta, calc}
\usepackage{subcaption} 
\usepackage{titlesec}
\titlespacing*{\section}
  {0pt}{1ex}{1ex}  
\titlespacing*{\subsection}
  {0pt}{0.5ex}{0.5ex}  
\makeatletter
\renewcommand\paragraph{\@startsection{paragraph}{4}{\z@}%
  {1.5ex \@plus1ex \@minus.2ex}%
  {0.8ex}%
  {\normalfont\normalsize\bfseries}}
\makeatother

\usepackage{multirow}
\usepackage{comment}
\usepackage{array}
\usepackage{makecell}
\newtheorem{assumption}{Assumption}

\newtheorem{definition}{Definition}
\newtheorem{example}{Example}
\newtheorem{remark}{Remark}

\newtheorem{theorem}{Theorem}

\newtheorem{corollary}[theorem]{Corollary}
\newtheorem{proposition}[theorem]{Proposition}
\usepackage{colortbl}

\usepackage{color}
\usepackage{enumerate}
\usepackage[noend]{algpseudocode}
\usepackage[hidelinks]{hyperref}
\usepackage{pgf}
\usepackage{tikz}
\usetikzlibrary{arrows,automata}
\usepackage{booktabs}
\usepackage{adjustbox}
\usepackage[normalem]{ulem}
\useunder{\uline}{\ul}{}
\usepackage{lscape} 
\usepackage{placeins}
\usepackage{algorithm}
\usepackage{algpseudocode}

\newcounter{myparagraph}[subsubsection]

\usepackage{geometry} 
\title{A Primal Perspective on Distributionally Robust Optimization: An Investigation on Modeling and Solution Strategies}
\author{Yiqi Tian and Bo Zeng\\
Department of Industrial Engineering, University of Pittsburgh}
\date{}

\begin{document}

\maketitle

\singlespacing
\begin{abstract}
As a popular optimization scheme, distributionally robust optimization (DRO) protects decisions against ambiguity in probability distributions. For (single-stage) DRO, prevailing dual reformulations can become difficult when model or ambiguity-set structures are complex. We study DRO from a primal perspective, working directly with distributions in ambiguity sets on closed, potentially unbounded sample spaces. This perspective leads to an algorithmic framework, referred to as BiCS, that constructs and leverages distribution cuts to achieve strong performance. We show that BiCS is applicable to standard DRO, almost-sure DRO, DRO with various chance constraints, and DRO with ambiguity sets strengthened by local information. Numerical experiments with moment and Wasserstein ambiguity sets show that this framework demonstrates superior performance, including solving cases where the examined compact reformulations are unavailable or computationally difficult. The local-information study also makes changes in worst-case distributions directly visible.
\end{abstract}

\section{Introduction}
Randomness or uncertainty is an inherent and unavoidable component of almost all real-world decision-making problems. In many applications, such as supply chain planning, healthcare operations, and financial portfolio design, decision makers must act based on incomplete data, imperfect estimates, and uncertain future conditions. Ensuring feasibility under adverse realizations while balancing risk with economic efficiency has long been a central topic in optimization. To this end, several modeling paradigms have been developed, among which stochastic programming (SP), robust optimization (RO), and distributionally robust optimization (DRO) are the most widely adopted in both research and practice \cite{bertsimas2011theory, wiesemann2014distributionally, kuhn2025distributionally}. Compared to deterministic models, these paradigms allow randomness/uncertainty to be considered explicitly (through different representations), and offer various trade-offs between protecting against adverse outcomes and producing cost-effective solutions. 

After a modeling paradigm is chosen, an equally important question is how to solve the resulting optimization problem efficiently. For SP and RO, many fast, scalable, and intuitive exact and approximate algorithms have been developed and successfully applied in practice. We note that these strong solution methods have inspired and enabled further extensions and variants of SP and RO to meet broader practical needs and application environments, such as incorporating more complex random factor descriptions or multi-stage decision structures. Certainly, these extensions and variants require continual algorithmic advancements. Nowadays, such mutually reinforcing progress has made SP and RO sufficiently mature for real-world deployment \cite{sahinidis2004optimization, gabrel2014recent, gorissen2015practical}. Nevertheless, this is not necessarily the case for DRO, a rather recent paradigm. This is largely due to the fact that many existing solution approaches rely on mathematically sophisticated reformulations, which can be computationally demanding or challenging to adapt beyond standard or highly structured settings. As a result, many existing studies remain restricted to DRO formulations with standard or highly structured problem settings. Consequently, despite its ability to safeguard out-of-sample performance, translating DRO models into real-world applications remains challenging. 

Clearly, further research efforts are required to make DRO methodologies more accessible and intuitive, enhance their computational tractability, and expand their modeling power and flexibility to handle different settings in practice. Against this backdrop, this paper (i) reviews and synthesizes existing (single-stage) DRO research and related methodologies, while identifying key gaps and inconsistencies that warrant attention; (ii) presents a primal-based decomposition method (and its variants) that is both mathematically intuitive and computationally efficient. Building upon this strong and flexible method, we further develop new DRO extensions that can accommodate different risk attitudes, computational requirements, and data-driven practices (including the integration of domain knowledge). These modeling extensions enable us to address problems that would otherwise be impractical to solve. Overall, our results significantly expand the applicability and practical relevance of DRO in real-world decision-making environments.

The paper is organized as follows. Section~\ref{sec:Relevant literature} revisits how uncertainty/randomness is modeled under different risk-treatment paradigms, including SP, RO, DRO, and related variants, and surveys the prevailing solution schemes associated with these formulations. Section~\ref{sec:DRO} explains how to extend and adapt the RO solution scheme to DRO, especially from a primal perspective. Section~\ref{sec:AS_DRO} introduces the crucial yet previously overlooked concept of almost-sure DRO and demonstrates how our unified solution scheme effectively addresses this formulation. Section~\ref{sec:extensions} broadens the discussion to encompass various nonconvex and discontinuous settings, including DRCCP (Section~\ref{sec:DRCCP}) and locally informed ambiguity set definitions (Section~\ref{sec:local_info}). Section~\ref{sec:results} presents extensive computational experiments using both first- and second-order moment ambiguity sets as well as $\ell_1$- and $\ell_2$-norm Wasserstein ambiguity sets. Finally, Section~\ref{sec:conclusion} provides a conclusion for this paper.

\section{Review of Relevant Research}\label{sec:Relevant literature}

In this section, we first provide a conceptual overview of existing approaches to modeling uncertainty in decision making and identify a key conceptual gap that has not been fully addressed in the literature. We then summarize the current solution schemes associated with these modeling paradigms. Finally, we outline our study and list contributions to the literature.

\subsection{Modeling Uncertainties}

When little reliable distributional information is available or the decision maker focuses only on the most disruptive possible outcome, RO \cite{soyster1973convex,ben1998robust} offers a natural modeling paradigm by optimizing against the worst-case realization while ensuring feasibility across a prescribed uncertainty set. RO is not usually viewed as a risk measure in the classical sense \cite{artzner1999coherent, FollmerSchied2016}, since it does not rely on an explicit probability distribution and instead enforces worst-case protection over an uncertainty set \cite{ben1998robust}. Nevertheless, it admits a natural interpretation under a broader concept of risk, which we term \textit{risk treatment}. In this paper, we use risk treatment to describe how uncertainty is protected against in an optimization model, regardless of the availability of any probabilistic information.  It subsumes the classical concept of risk measures, which are typically defined as functionals on random variables and encode risk attitudes through probability distributions \cite{FollmerSchied2016}. From this perspective, the supremum-based protection in RO reflects a highly conservative treatment: every admissible scenario is treated as potentially critical and feasibility is enforced over the entire uncertainty set. This property makes RO particularly appealing when even low-probability adverse events can lead to severe consequences. On the other hand, RO is often criticized for producing overly conservative solutions, especially when the uncertainty set is large or loosely specified.

As additional distributional information becomes available, it is certainly more reasonable to move beyond purely set-based protection and allow risk to be treated in a probabilistic manner. If the distribution is known exactly, SP \cite{birge2011introduction,prekopa2013stochastic} targets expected performance by prioritizing typical realizations, making it ideal for settings with well-understood variability \cite{gaspars2017newsvendor,irawan2022stochastic, burdett2023stochastic}. Moreover, when the decision maker instead seeks explicit control over adverse outcomes, attention naturally shifts from expectation-based criteria to tail-based risk measures. Particularly, chance-constrained optimization (CCP) \cite{charnes1959chance,nemirovski2007convex} captures this preference by limiting the violation probability to $\epsilon$, thereby controlling the likelihood of unfavorable tail events based on the Value at Risk (VaR) principle. VaR is a quantile-based risk measure that identifies a specified distributional quantile rather than accounting for average performance \cite{liu2021theory, fissler2025elicitability}. Such VaR quantile formulations have been widely adopted in infrastructure systems, including power grids and transportation networks, where occasional violations may be acceptable while their frequency must be tightly controlled 
\cite{bienstock2014chance, kang2014value}. This risk treatment, by allowing violations,
is clearly different from RO. 
However, as VaR limits only the frequency but not the magnitude of losses, Conditional Value-at-Risk (CVaR) is introduced to account for the expected severity of losses beyond the VaR quantile \cite{rockafellar2000optimization, rockafellar2002conditional}, and thus incorporates a more informative description of tail behavior. For this reason, CVaR-based formulations are commonly adopted in applications like portfolio optimization when decision makers seek to account for both the likelihood and the magnitude of possible extreme losses.

In many applications, however, the underlying distribution cannot be specified exactly, making it ineffective or inappropriate to rely on a fixed distribution for decision making. Yet, we often have partial information available, taking the form of support bounds, moment estimates, or qualitative structural properties. In such settings, distributionally robust approaches provide a natural extension of probabilistic modeling by replacing the nominal distribution with an \textit{ambiguity set}, a set of all plausible distributions consistent with the available information. From the risk treatment perspective, distributionally robust approaches can be understood as placing a worst-case risk measure \cite{FollmerSchied2016, shapiro2017distributionally} on top of a classical risk measure evaluated under each distribution in the ambiguity set. For example, consider the worst-case expected value over all distributions in the ambiguity set.   
This paradigm, by leveraging available (and incomplete) probabilistic structure,  allows decision makers to hedge against distributional misspecification in a more effective fashion.  

If the aforementioned worst-case expected value treatment is adopted,  this leads to the mainstream DRO formulation \cite{scarf1957min,delage2010distributionally, wiesemann2014distributionally}. It imposes restrictions on the worst-case expected values of the left-hand-side (LHS) expressions.
Differently, when the risk treatment is reflected by a quantile-based consideration, the decision-making model naturally becomes a distributionally robust chance-constrained program (DRCCP), which is tailored for high-stakes settings where the reliability of the chance constraint must be guaranteed even when the underlying distribution is inaccurately estimated \cite{calafiore2006distributionally,xie2018deterministic,chen2024data}.
In many such settings, DRCCP is further relaxed or approximated through CVaR-based reformulations, leading to distributionally robust CVaR (DR-CVaR) models that optimize or constrain the worst-case average tail risk over the ambiguity set \cite{deo2025design,ren2024distributionally}. Clearly, when the ambiguity set collapses to a singleton distribution, DRCCP reduces to the classical CCP, and DR-CVaR generalizes  the classical CVaR formulation.

\subsection{A Discussion on Risk Treatments}

Although the preceding risk treatments are widely used, their relative conservatism and modeling implications are not always clear from their definitions. For a fixed uncertain quantity, the feasible region can change depending on whether the model evaluates average performance, controls tail events, or protects against the worst admissible realization. A systematic comparison therefore helps clarify how these paradigms relate.
\begin{example}
To facilitate a comparison between risk treatments, we consider the single-variable optimization problem:
\begin{equation}
\max \{ cx : \rho(a) x \leq b, x \in \mathbb{R}_+ \}
\label{eq_demo}
\end{equation}
where $a$ is a random parameter and $b,c>0$. In Table~\ref{tab:risk_treatments}, $\rho(a)$ denotes the risk treatment, and $\eta \in \mathbb{R}$ is an auxiliary threshold variable in the definitions where it appears. We assume that $a$ follows a fixed discrete distribution on the sample space $\Omega = \{1, \dots, 9\}$ with probability mass function $p=$(0, 0.05, 0.1, 0.2, 0.3, 0.2, 0.1, 0.05, 0).

\begin{table}[h]
\centering
\caption{Definitions of risk treatments $\rho(a)$}
\label{tab:risk_treatments}
\begin{tabular}{lll}
\toprule
\textbf{Risk Treatment} & \textbf{Notation} & \textbf{Mathematical Definition} \\
\midrule
Supremum (Worst-case) 
& $\sup(a)$ 
& $\sup_{\omega \in \Omega} a(\omega)$ \\
\addlinespace
Essential Supremum 
& $\operatorname{ess.sup}(a)$ 
& $\inf\{\eta \in \mathbb{R} : \mathbb{P}(a > \eta)=0\}$ \\
\addlinespace
Conditional VaR 
& $\operatorname{CVaR}_{1-\epsilon}(a)$ 
& $\inf_{\eta \in \mathbb{R}} \left\{ \eta + \frac{1}{\epsilon}\mathbb{E}[(a-\eta)^+] \right\}$ \\
\addlinespace
Value-at-Risk 
& $\operatorname{VaR}_{1-\epsilon}(a)$ 
& $\inf\{\eta \in \mathbb{R} : \mathbb{P}(a \le \eta) \ge 1-\epsilon\}$ \\
\addlinespace
Expectation 
& $\mathbb{E}[a]$ 
& $\sum_{\omega \in \Omega} a(\omega)\mathbb{P}(\omega)$ \\
\bottomrule
\end{tabular}
\end{table}

Since the constraint in \eqref{eq_demo} takes the form $\rho(a)x \le b$, a larger value of $\rho(a)$ directly tightens the constraint and reduces the feasible region of $x$. Therefore, risk treatments that yield larger values of $\rho(a)$ correspond to higher levels of conservatism in the optimization problem.

Figure~\ref{Risk Measure Comparison} provides a schematic illustration of these risk treatments and the spectrum of conservatism, ranging from the highly conservative almost-sure protection to the risk-neutral expectation-based one. Here, the shaded regions indicate which portion of the distribution is relevant to each risk operator. At the most conservative end, the supremum operator is independent of the probability distribution and depends only on the boundary of the sample space. In this discrete example, the essential supremum instead depends on the largest value with positive probability mass, regardless of how probability is distributed elsewhere. Moving along the spectrum, the VaR operator depends only on the quantile cutoff and is insensitive to the distribution's shape beyond that threshold. In contrast, CVaR and expectation both aggregate values over a range of outcomes, and therefore depend on the shape of the distribution.

\begin{figure}[h!]
    \centering
    \includegraphics[width=0.9\textwidth]{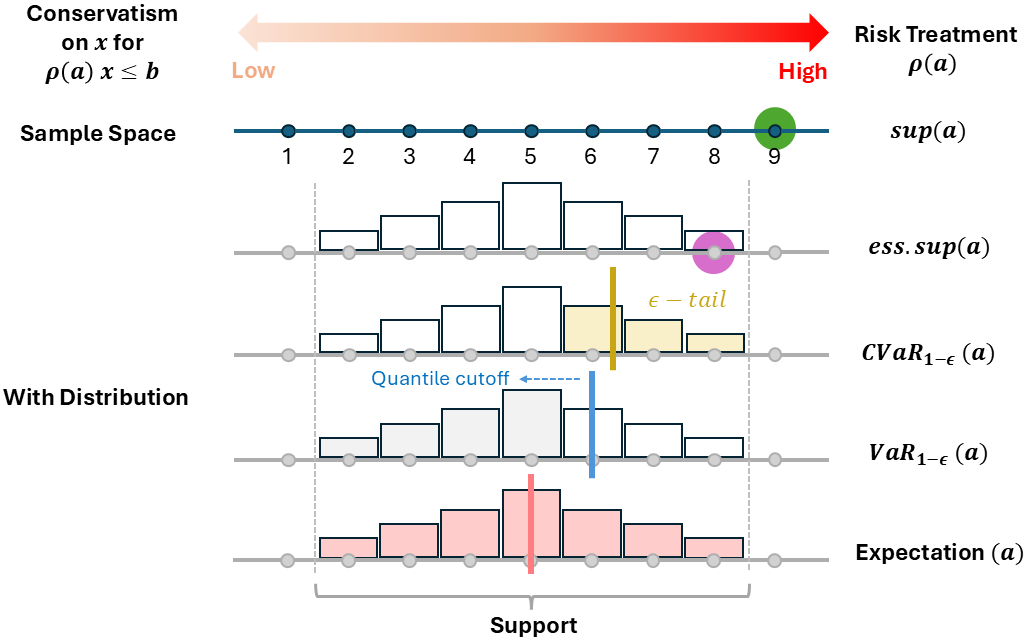}
    \caption{Schematic illustration of different risk treatments $\rho(a)$ over $\Omega$.}
    \label{Risk Measure Comparison}
\end{figure}
\end{example}

We note that Figure~\ref{Risk Measure Comparison} also helps clarify the distinction between two supremum-based risk treatments: the supremum protection, imposed over the entire sample space without distributional information, and the essential supremum (ess.sup) protection, which depends on the underlying probability distribution \cite{shapiro2021lectures}. To the best of our knowledge, this distinction has not been made explicit in the literature, partly because the sample space is often implicitly identified with the support of the distribution (which is indeed valid in some contexts). For instance, a CCP is sometimes described as resembling an RO model as $\epsilon \downarrow 0$ \cite{erdougan2006ambiguous}. That interpretation requires almost-sure feasibility to coincide with pointwise feasibility on the modeled sample space. Nevertheless, as illustrated in Figure~\ref{Risk Measure Comparison}, although points 1 and 9 belong to the sample space, they have zero probability; therefore, the support of this distribution consists only of points 2 through 8, a proper subset of the sample space.  Consequently, the supremum protection is strictly more conservative than the essential supremum protection.

When extending from a single distribution to an ambiguity set, this distinction carries over to the relationship between DRO and RO. 
One might expect DRO to reduce to RO when the ambiguity set becomes sufficiently simple (e.g., only enforcing normalization). Such a reduction is guaranteed when the ambiguity set contains all Dirac measures over the entire sample space. By contrast, an almost-sure requirement may protect only the smaller effective support induced by the admissible distributions. 
For clarity, we refer to optimization under essential-supremum protection (with respect to an ambiguity set) as the \emph{almost-sure DRO} (AS-DRO), and the currently mainstream formulation, i.e., optimization under worst-case expected-value protection, as \emph{standard DRO}. 
Almost-sure feasibility under ambiguity has previously appeared as a recourse-feasibility condition in two-stage DRO \cite{lu2024two}; here, we study this protection criterion as a standalone DRO model class.
AS-DRO can differ from RO when a nonempty violation event has probability zero under every admissible distribution. Additional support or mass restrictions, including local information, may make the effective support induced by the ambiguity set a strict subset of the sample space. This phenomenon will be discussed in detail in Section~\ref{sec:AS_DRO}, with further implications for local-information-based ambiguity sets presented in Section~\ref{sec:local_info}.

Figure~\ref{Concept} provides a high-level summary of the main approaches for modeling uncertainty. The figure organizes these methods along two dimensions: (i) the underlying risk treatment (expectation-based, tail-based, or supremum-based) and (ii) the amount of distributional information available (e.g., a pure uncertainty set, a fixed known distribution, or a family of plausible distributions). This organization highlights fundamental differences among those paradigms and clarifies how they relate to one another. Note that  \emph{Almost-Sure Stochastic Programming} (AS-SP) is AS-DRO when the ambiguity set reduces to a fixed known distribution.  

\begin{figure}[h!]
    \centering
    \includegraphics[width=0.75\textwidth]{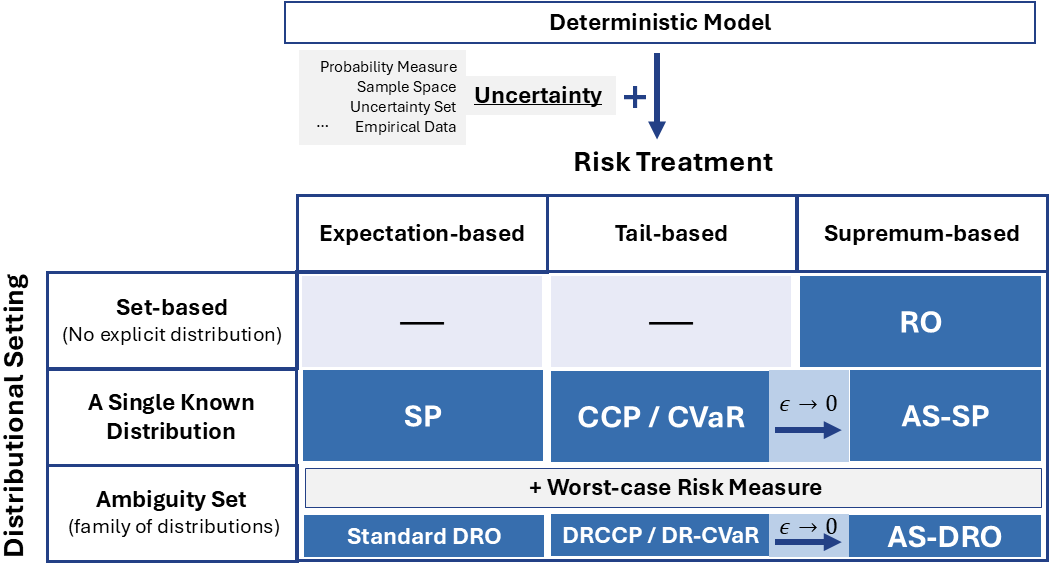}
    \caption{Taxonomy of optimization approaches for modeling uncertainty}
    \label{Concept}
\end{figure}

\subsection{Exact Solution Schemes of DROs}

Beyond the question of how uncertainty should be modeled, an equally fundamental issue is how the resulting models can be solved exactly.  Note that standard DRO and DRCCP have been formulated as bilevel $\min$--$\max$ (or, more precisely, $\min$--$\sup$) problems: the ``outer'' minimization chooses a decision, while the ``inner'' maximization selects a worst-case distribution from an ambiguity set. This bilevel structure is the core algorithmic challenge. In the following, we review methods developed for computing standard DRO and DRCCP problems, mainly focusing on exact ones.

From a theoretical perspective, the inner maximization problem in standard DRO is closely related to the classical generalized moment problem and semi-infinite optimization \cite{wiesemann2014distributionally, shapiro2017distributionally}. A predominant approach to address this challenge is to exploit strong duality and replace the inner optimization with an equivalent dual representation \cite{shapiro2017distributionally,wiesemann2014distributionally,kuhn2025distributionally}. Depending on the structure of the ambiguity set — such as moment-based sets, $\phi$-divergence models, Wasserstein metric–based sets, and event-based descriptions — the inner maximization problem admits a strong dual representation \cite{delage2010distributionally, wiesemann2014distributionally, chen2020robust}. When the resulting dual constraints are tractable, this converts the bilevel min--sup model into an equivalent single-level optimization problem. For instance, moment-based ambiguity sets typically lead to semidefinite or conic reformulations through dualizing moment constraints \cite{delage2010distributionally,bertsimas2005optimal,jiang2023optimized}, while $\phi$-divergence models yield convex programs by exploiting conjugate duality of divergence functions \cite{ben2013robust, shapiro2017distributionally}. Similarly, Wasserstein-based DRO exploits duality in optimal transport (i.e., Kantorovich duality) to reformulate the worst-case expectation problem into a tractable optimization problem involving transportation-cost penalization \cite{mohajerin2018data, gao2023distributionally, blanchet2019quantifying}. Because strong duality is the cornerstone, these reformulations typically require regularity conditions such as tail conditions, bounded moments, or appropriate continuity and integrability conditions \cite{kuhn2019wasserstein, xu2018distributionally, gao2023distributionally}. We note that this duality-based reformulation paradigm is currently the dominant and mainstream approach for solving standard DRO models in the literature \cite{kuhn2025distributionally, rahimian2022frameworks}. In many cases, the resulting reformulations are convex programs that can be solved efficiently using existing optimization solvers.

As with standard expectation-based DRO, tail-based DRCCPs constitute another well-established line of research involving ambiguity sets, despite the nonlinear and nonconvex nature of probabilistic constraints \cite{kuccukyavuz2022chance}. In these models, the inner maximization problem seeks a worst-case probability distribution that maximizes the probability of constraint violation, thereby coupling distributional ambiguity with nonconvex chance constraints and further complicating the bilevel structure. A common approach approximates chance constraints with CVaR, which provides a convex surrogate and often leads to tractable reformulations \cite{rockafellar2000optimization, nemirovski2007convex}. Moreover, for individual DRCCPs under second-order moment and support information, the worst-case CVaR approximation is exact when the constraint function is concave or quadratic \cite{zymler2013distributionally}. Beyond CVaR reformulations, under certain classes of ambiguity sets, such as moment-based sets and Wasserstein balls, DRCCPs admit exact reformulations through a combination of duality theory and mixed-integer modeling techniques \cite{zymler2013distributionally, xie2018deterministic, xie2021distributionally, ji2021data, shen2025convex}. 
Some Wasserstein-based reformulations introduce binary variables to encode violation events and yield exact mixed-integer conic models under specific uncertainty and decision structures \cite{xie2021distributionally, ji2021data}.

In addition to the aforementioned dual-based reformulation strategies, which aim to eliminate the infinite-dimensional inner problem in standard DRO and DRCCP models, duality has also been employed in developing decomposition algorithms for these models; such algorithms are generally approximation methods (e.g., \cite{xu2018distributionally} for standard DRO and \cite{jiang2022also, jiang2024also} for DRCCPs). More recent direct approaches include a probability cut framework for a structured assignment problem and a finitely convergent decomposition method under finite discrete support \cite{wang2022solution, pathy2025decomposition}. Overall, we emphasize that duality in convex programs serves as the primary driver behind existing solution approaches for standard DRO and DRCCPs. This stands in contrast to the methodological literature on RO, where both dual-based reformulations and primal-based decomposition methods are widely used due to their different strengths and applicability.

\subsection{Our Research Contributions} 
Distinct from the existing literature on DRO, we investigate several critical problems concerning its fundamental concepts and solution methods. This study aims to develop a deeper and more intuitive understanding of general DRO models, provide stronger and less conservative modeling tools suitable for practical data-driven contexts, and significantly enhance computational capabilities for solving complex instances.  Our contributions fall into two main categories, \emph{conceptual} and \emph{methodological}.

On the conceptual side, we introduce AS-DRO to bridge a structural gap between the underlying sample space and the effective support induced by probability mass allocation. This perspective highlights a form of worst-case protection that is not captured by existing expectation-based or tail-based DRO formulations. We further study DRO problems directly from a primal viewpoint, without requiring an ambiguity-set-specific compact dual reformulation of the full DRO model. This primal perspective provides a unified interpretation of worst-case protection and clarifies the structural connections among RO, DRO, AS-DRO, and DRCCPs. Building on this primal perspective, we also present a systematic classification of DRCCP formulations based on their protected performance criteria and ambiguity structures, which facilitates unified modeling and algorithmic development.

On the methodological side, we develop an exact primal-based decomposition framework for solving DRO problems, inspired by cutting-plane methods in robust optimization. The proposed algorithm operates mainly in the primal space and is supported by rigorous convergence guarantees. A key feature of the framework is its extensibility: with minimal structural modifications, it applies to AS-DRO, DRCCPs, and DRO models with ambiguity sets strengthened by additional local information, which can be used to capture a wide range of distributions that otherwise could not be well characterized by classical ambiguity sets. 

Our extensive numerical experiments show that, on the tested instances, the proposed primal-perspective framework applies across the considered models and, where applicable, substantially outperforms the examined duality-based reformulations.  Overall, the presented work provides an intuitive and straightforward treatment of distributionally robust approaches, 
making these approaches easier to implement for robust and data-driven decision making in practice.

\section{Standard DRO} \label{sec:DRO}

Consider the standard DRO problem as follows:
\begin{subequations}\label{eq_DRO}
\begin{align}
    \min_{\bm x \in \mathcal{X}} & \quad \bm c^\top \bm x\\
    \text{s.t.} & \quad \sup_{\mathbb P_m \in \mathcal{P}_m} \mathbb{E}_{\mathbb{P}_m}[f_m(\bm a_m, \bm{x})] \leq b_m, \quad \forall m \in [M] \label{DRO_constraint}
\end{align}
\end{subequations}
Here, \( \bm{x} \in \mathcal{X} \subseteq \mathbb{R}^n \) represents the decision variable, and $M$ is the number of constraints with $[M]=\{1,\dots,M\}$. In constraint $m \in [M]$, \( \bm a_m \) denotes a random vector on a measurable space \((\mathcal{A}_m, \mathcal{F}_m)\), where \( \mathcal{A}_m \subseteq \mathbb{R}^n  \) is the sample space and \( \mathcal{F}_m = \mathcal{B}(\mathcal{A}_m) \) is its Borel \(\sigma\)-algebra. The function \( f_m(\bm a_m, \bm{x}) \) is a prescribed constraint function.  
The vectors \( \bm{c} \in \mathbb{R}^n \) in the objective function and \( \bm{b} \in \mathbb{R}^M \) on the RHS are assumed to be fixed problem parameters. This assumption is made without loss of generality, noting that the objective function with uncertain coefficients can be treated as a  constraint through the epigraph reformulation technique and an uncertain RHS can be moved to LHS by introducing a new decision variable. Assume that the unknown probability distribution \( \mathbb{P}_m \) governing \( \bm a_m \) resides in an ambiguity set \( \mathcal{P}_m \), a collection of all plausible probability distributions of constraint $m$. In our study, we adopt the following generalized moment ambiguity set for all $\mathcal P_m$s, noting that this structure subsumes moment-based ambiguity sets and lifted Wasserstein ambiguity sets with finite or finitely representable nominal distributions \cite{delage2010distributionally, mohajerin2018data}.

\begin{definition} \label{ambiguity_set_define} The generalized moment ambiguity set \(\mathcal{P}\) is defined as:
\begin{align}
    \mathcal{P} := \left\{ \mathbb{P} \in \mathcal{M}_+(\mathcal{A}, \mathcal{F}) \,\middle|\, 
    \mathbb{P}(\mathcal{A}) = 1, \mathbb{E}_{\mathbb{P}} \left[ g_t^-(\bm a) \right] < \infty,
    \mathbb{E}_{\mathbb{P}} \left[ g_t(\bm a) \right] \leq \gamma_t, \forall t \in [T] \right\} \label{p_gm_ambiguity},
\end{align}
Here, \(\mathcal{M}_+(\mathcal{A}, \mathcal{F})\) denotes the set of nonnegative measures on \((\mathcal{A}, \mathcal{F})\).
The function \(g_t(\cdot): \mathcal{A} \to \mathbb{R}\) is a measurable mapping that specifies a moment condition indexed by \(t \in [T]\), where \(g_t^+ := \max\{g_t,0\}\), \(g_t^- := \max\{-g_t,0\}\), and \(\gamma_t \in \mathbb{R}\). For every \(\mathbb{P} \in \mathcal{P}\), the finite negative part and finite upper moment bound imply
\(\mathbb{E}_{\mathbb{P}}[g_t^+] = \mathbb{E}_{\mathbb{P}}[g_t] + \mathbb{E}_{\mathbb{P}}[g_t^-] \leq \gamma_t + \mathbb{E}_{\mathbb{P}}[g_t^-] < \infty\), and hence \(\mathbb{E}_{\mathbb{P}}[|g_t|] < \infty\).
\end{definition}

When Definition~\ref{ambiguity_set_define} is instantiated for constraint $m\in[M]$, we write $T_m$ for the number of moment conditions and $\gamma_{mt}$ for the bound associated with $g_t$, $t\in[T_m]$.

\begin{remark}[Generalized moment framework as a unified target]
\label{rem:regularity-unification}
A key motivation for adopting the generalized moment form in Definition~\ref{ambiguity_set_define} is that, by suitable choices of $g_t$ and $\gamma_t$, it subsumes or serves as the reformulation target for the principal families of ambiguity sets in the DRO literature.
\begin{itemize}
    \item \emph{Moment-based sets} \cite{delage2010distributionally, wiesemann2014distributionally}: directly an instance of Definition~\ref{ambiguity_set_define}, with $g_t$ encoding mean, covariance, or higher-order moment bounds.
    \item \emph{Wasserstein sets} \cite{mohajerin2018data, kuhn2019wasserstein}: through the standard lifting, a Wasserstein ball with a finite or finitely representable nominal distribution can be represented by joint distributions $\pi$ on $\mathcal A\times\mathcal A$ whose second marginal is the nominal distribution and whose transport-cost moment is bounded by $\theta^p$. The resulting finite marginal constraints and transport-cost bound fit Definition~\ref{ambiguity_set_define}; for a general nominal distribution, the fixed-marginal condition need not have a finite representation.
\end{itemize}
\end{remark}

Building on the ambiguity set in Definition~\ref{ambiguity_set_define}, we now introduce a mild regularity condition that is maintained throughout the remainder of the paper. Together with the integrability built into Definition~\ref{ambiguity_set_define}, it guarantees 
that the expectations entering the DRO problem~\eqref{eq_DRO} are well defined under each $\mathbb{P}_m \in \mathcal{P}_m$, and it underpins both the dual reformulation in Section~\ref{sec:dro-dual} and the primal decomposition in Section~\ref{BiCS section}. Conditions of this type are 
standard in the DRO literature \cite{fournier2015rate, kuhn2019wasserstein, 
gao2023distributionally, wang2025sinkhorn}.

\begin{assumption}[Regularity of the generalized moment ambiguity set]
\label{assumption:regularity}
For each $m \in [M]$, the ambiguity set $\mathcal{P}_m$ is nonempty and 
satisfies the following two conditions:
\begin{itemize}
    \item[\textup{(i)}] \textbf{Closed sample space.} 
    The sample space $\mathcal{A}_m \subseteq \mathbb{R}^n$ is nonempty and closed, the objective kernel $f_m(\cdot, \bm{x})$ is upper semicontinuous on $\mathcal{A}_m$ for every $\bm{x} \in \mathcal{X}$, and each moment function $g_t : \mathcal{A}_m \to \mathbb{R}$, $t \in [T_m]$, is lower 
    semicontinuous.
    \item[\textup{(ii)}] \textbf{Controlled growth.} 
    For every $\bm{x} \in \mathcal{X}$, the cost kernel $f_m(\cdot,\bm x)$ is dominated by the moment functions in the sense that
    \[
        |f_m(\bm a_m,\bm x)| \le \beta_{0m}(\bm x) + 
        \sum_{t \in [T_m]} \beta_{tm}(\bm x)\, |g_t(\bm a_m)|, \quad \forall \bm a_m \in \mathcal{A}_m,
    \]
    for some scalars $\beta_{0m}(\bm x) \in \mathbb{R}$ and 
    $\beta_{tm}(\bm x) \ge 0$.
    Thus, for every $\mathbb{P}_m \in \mathcal{P}_m$,
    \[
        \mathbb{E}_{\mathbb{P}_m}[|f_m(\bm a_m,\bm x)|]
        \le \beta_{0m}(\bm x) + \sum_{t \in [T_m]} \beta_{tm}(\bm x)\,
        \mathbb{E}_{\mathbb{P}_m}[|g_t(\bm a_m)|] < \infty.
    \]
\end{itemize}
\end{assumption}

With the formulation in place, we next present a couple of solution schemes for (\ref{eq_DRO}). They actually can be seen as natural extensions of their counterparts developed for solving RO problems.

\subsection{A Revisit of Solution Schemes for RO}
\label{subsect_RO}
Before presenting solution methods for DRO, we briefly revisit two classical approaches for solving RO: dual-based reformulation and primal-based decomposition. Note that this helps strengthen our understanding of the connection between RO and DRO and allows us to appreciate the development of their solution methods from a unified perspective.

The general RO model is:
\begin{subequations}\label{eq_RO}
\begin{align}
    \textbf{(RO)} \quad \min_{\bm{x} \in \mathcal{X}} \quad & \bm{c}^\top \bm{x} \\
    \text{s.t.} \quad & \sup_{\bm{a}_m \in \mathcal{A}_m} f_m(\bm{a}_m, \bm{x}) \;\;\leq\; b_m, 
    \quad \forall m \in [M]. \label{RO_constraint}
\end{align}
\end{subequations}
Currently, two main classes of solution methods are widely used for solving RO instances, i.e., dual-based reformulation and primal-based decomposition approaches \cite{ben1999robust,bertsimas2004price,bertsimas2016reformulation}. 

\noindent $(i)$ \textbf{Dual-based Reformulation Method:}
A foundational result for RO is that strong duality (with respect to certain conic uncertainty sets) can be leveraged to reformulate \eqref{RO_constraint}, which contains an inner worst-case maximization, into an equivalent regular constraint. 

\begin{theorem}[Adapted from \cite{ben1999robust}]
\label{linear-robust}
Consider a proper cone $\mathcal{K}_m$ with dual cone $\mathcal{K}_m^*$. 
Suppose that $f_m(\bm a_m, \bm x) = \bm a_m^\top \bm x$ and 
$\mathcal{A}_m = \{\bm a_m \in \mathbb{R}^n : \bm D_m \bm a_m 
\preceq_{\mathcal{K}_m} \bm d_m\}$ for $m \in [M]$, and that each 
$\mathcal{A}_m$ satisfies the conic Slater condition: there exists 
$\bm a_m^0 \in \mathbb{R}^n$ with $\bm d_m - \bm D_m \bm a_m^0 \in 
\mathrm{int}(\mathcal{K}_m)$. Then the RO problem~\eqref{eq_RO} is equivalent to the following optimization problem in $\bm x$ and $\{\bm p_m\}_{m\in[M]}$:
\begin{subequations}
\label{eq_RO_duality_reform}
\begin{align}
    \min_{\bm{x} \in \mathcal{X}} \quad & \bm{c}^\top \bm{x} \\
    \text{s.t.} \Big\{& \bm{p}_m^\top \bm{d}_m \le b_m, \; 
                       \bm{D}_m^\top \bm{p}_m = \bm{x}, \; 
                       \bm{p}_m \in \mathcal{K}_m^*\Big\} 
                     \quad \forall m \in [M].
\end{align}
\end{subequations}
\end{theorem}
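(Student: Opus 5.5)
The plan is to show, for each fixed $m\in[M]$ and fixed $\bm x$, that the semi-infinite constraint $\sup_{\bm a_m\in\mathcal A_m}\bm a_m^\top\bm x\le b_m$ holds if and only if there exists $\bm p_m$ with $\bm p_m^\top\bm d_m\le b_m$, $\bm D_m^\top\bm p_m=\bm x$, $\bm p_m\in\mathcal K_m^*$. Since the constraints are separable in $m$ and the objective is unchanged, equivalence of the two problems then follows: $\bm x$ is feasible for \eqref{eq_RO} exactly when it can be extended by some $\{\bm p_m\}$ to a feasible point of \eqref{eq_RO_duality_reform}, with identical objective value.

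Fix $m$ and $\bm x$. Consider the primal conic program $v_P:=\sup\{\bm x^\top\bm a : \bm d_m-\bm D_m\bm a\in\mathcal K_m\}$ and its Lagrangian dual. For $\bm p\in\mathcal K_m^*$, the Lagrangian $\bm x^\top\bm a+\bm p^\top(\bm d_m-\bm D_m\bm a)$ is bounded above in $\bm a$ only if $\bm D_m^\top\bm p=\bm x$, giving the dual $v_D:=\inf\{\bm p^\top\bm d_m : \bm D_m^\top\bm p=\bm x,\ \bm p\in\mathcal K_m^*\}$. The ``if'' direction is weak duality: for any feasible $\bm a$ and such $\bm p$, $\bm x^\top\bm a=\bm p^\top\bm D_m\bm a=\bm p^\top\bm d_m-\bm p^\top(\bm d_m-\bm D_m\bm a)\le\bm p^\top\bm d_m\le b_m$, because $\bm p\in\mathcal K_m^*$ and $\bm d_m-\bm D_m\bm a\in\mathcal K_m$.

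The ``only if'' direction is the main step, and it relies on conic strong duality. Suppose $v_P\le b_m<\infty$. The primal is feasible and strictly feasible by the conic Slater condition at $\bm a_m^0$, and it is bounded above. The conic duality theorem (for a proper cone, with a strictly feasible and bounded primal) then gives $v_D=v_P$ and, crucially, guarantees that the dual infimum is \emph{attained}. Hence there is $\bm p_m\in\mathcal K_m^*$ with $\bm D_m^\top\bm p_m=\bm x$ and $\bm p_m^\top\bm d_m=v_P\le b_m$.

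Attainment is the delicate point. Without it one would only get $\bm p$ with $\bm p^\top\bm d_m\le b_m+\varepsilon$, which does not give an exact reformulation. If I needed to verify attainment directly rather than cite it, I would argue as follows. First, the set $S=\{(\bm D_m\bm a+\bm k,\ \bm x^\top\bm a): \bm a\in\mathbb R^n,\ \bm k\in\mathcal K_m\}$ is convex. Second, the point $(\bm d_m,\ v_P+\varepsilon)$ lies outside $S$ for every $\varepsilon>0$. A separating hyperplane then yields a multiplier $(\bm p,\lambda)$. Slater's point rules out $\lambda=0$, and normalizing $\lambda=1$ gives $\bm p\in\mathcal K_m^*$ with $\bm D_m^\top\bm p=\bm x$ and $\bm p^\top\bm d_m\le v_P$. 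This is the standard argument, so in the proof I would simply invoke the conic duality theorem as in \cite{ben1999robust}.
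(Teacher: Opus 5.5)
Your proposal is correct and matches the paper's proof. The paper also fixes each row, uses weak conic duality to show that any certificate $\bm p_m$ implies the robust constraint, and uses Slater-based strong duality with dual attainment, which applies because the inner supremum is finite, to produce the certificate.

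One minor slip in your optional aside: separating $(\bm d_m, v_P+\varepsilon)$ from $S$ only yields $\bm p^\top\bm d_m\le v_P+\varepsilon$, not $\bm p^\top\bm d_m\le v_P$. To obtain the exact bound, either take a supporting hyperplane to $S$ at $(\bm d_m,v_P)\notin\operatorname{int}S$, or let $\varepsilon\downarrow 0$ and use the boundedness of $\{\bm p\in\mathcal K_m^*:\bm D_m^\top\bm p=\bm x,\ \bm p^\top\bm d_m\le v_P+1\}$ that the Slater point guarantees.
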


\emph{Proof.} See Appendix~\ref{app:proof-thm32}.

\begin{remark}
When \(\mathcal{K}_m\) is a polyhedron or belongs to a tractable convex cone (e.g., polyhedral, second-order, or semidefinite), the reformulation \eqref{eq_RO_duality_reform} amounts to evaluating the support function of \( \mathcal{A}_m \), leading to a tractable convex optimization problem. In particular, a polyhedral uncertainty set yields a linear programming reformulation, while second-order and semidefinite uncertainty sets lead to conic programs of the corresponding type, which can be solved efficiently using standard interior-point methods \cite{nesterov1994interior}.
\end{remark}

\noindent $(ii)$ \textbf{Primal-based Decomposition Method:}
As an alternative strategy, we directly consider the primal form of \eqref{RO_constraint} and explicitly hedge against particular scenarios in $\mathcal{A}_m$. Through an iterative master-subproblem procedure, the infinite collection of robust constraints contained in \eqref{RO_constraint} can be approximated by a finite subset that is iteratively refined \cite{mutapcic2009cutting}. Particularly, given a finite set of scenarios \(\hat{\mathcal{A}}_m \subseteq \mathcal{A}_m\), we construct and solve the following \textit{master problem} (MP), which enforces only the constraints defined by $\hat{\mathcal{A}}_m$. Note that it is a relaxation of the original RO model in \eqref{eq_RO}.

\[
\begin{aligned}
\textbf{MP-RO:} \quad
&\min_{\bm x \in \mathcal{X}} \quad  \bm{c}^\top \bm{x} \\
&\text{s.t.} \quad f_m(\bm{a}_{mk}, \bm{x}) \;\le\; b_m, \quad \forall \bm{a}_{mk} \in \hat{\mathcal{A}}_m,\; m \in [M], 
\end{aligned}
\]
Let $\bm x^*$ denote an optimal solution obtained from solving MP. We next solve the following 
\textit{subproblem} (SP)  to identify a scenario $\bm{a}_m \in \mathcal{A}_m$ that is violated by $\bm x^*$ for $m\in [M]$. 
\[
\begin{aligned}
\textbf{SP-RO}_m: \quad
&\Omega_m (\bm x^*)
=\; \sup_{\bm{a}_m\in \mathcal{A}_m} \; f_m(\bm{a}_m, \bm{x}^*)
\end{aligned}
\]
If $\Omega_m (\bm x^*)>b_m$, choose any $\bm{a}^*_m\in\mathcal A_m$ satisfying $f_m(\bm{a}^*_{m},\bm{x}^*)>b_m$, and augment \(\hat{\mathcal{A}}_m\) by including $\bm{a}^*_m$. After completing an exact sweep over all subproblems, if some 
$\hat{\mathcal{A}}_m$s are updated, we resolve the master problem (MP) to start a new iteration. Otherwise, $\Omega_m(\bm x^*)\le b_m$ for every $m\in[M]$, and the current solution $\bm{x}^*$ exactly solves RO in \eqref{eq_RO}.


\begin{remark}
We note that this primal-based decomposition method is rather general, as it does not require the convexity of $\mathcal{A}_m$. Empirical studies indicate that the relative performance of reformulation-based and decomposition-based methods depends on factors such as problem size, structural properties, and solver capabilities \cite{mutapcic2009cutting,fischetti2012cutting,bertsimas2016reformulation}. Yet, the decomposition method is often preferred for large-scale problems due to its computational scalability. Moreover, from a primal perspective, it can explicitly construct worst-case realizations, offering clearer insight than the dual variables in the reformulation, which are often harder to interpret.
\end{remark}

\subsection{Dual-based Reformulation for Standard DRO}\label{sec:dro-dual}

In the existing literature on standard DRO, the dual-based reformulation is arguably the dominant strategy. As shown in~\eqref{eq_DRO} and~\eqref{eq_RO}, both formulations contain an inner worst-case maximization that is suitable for applying the dual-based reformulation technique described in the previous subsection. However, DRO introduces a key structural distinction: the inner worst-case expectation involves an additional optimization over probability measures, which gives rise to an extra layer of duality that is absent in RO. Hence, a ``bi-dual'' construction is naturally needed, dualizing over the probability measure as well as the sample space. In the current literature, this is usually done sequentially, dualizing the probability measure first and then the sample space, though the two dualizations are in principle interchangeable and the order is more a matter of convention. We next present a dual-based reformulation that captures the core structural elements of convex standard DRO, and then discuss the differences and connections between the reformulation techniques for RO and standard DRO.

For notational convenience, let $\bm{e}_{mt} \in \mathbb{R}^n$ denote the moment coefficient vector for the $t$-th moment function of constraint $m$. Let $\bm{E}_m := (\bm{e}_{m1}, \ldots, \bm{e}_{mT_m})^\top \in \mathbb{R}^{T_m \times n}$ stack these coefficient vectors as the rows 
of $\bm{E}_m$, and let $\bm{\gamma}_m := (\gamma_{m1}, \ldots, 
\gamma_{mT_m})^\top \in \mathbb{R}^{T_m}$ collect the corresponding 
moment bounds for constraint $m$.

\begin{theorem}[Adapted from \cite{wiesemann2014distributionally}]
\label{linear-dro} 
Suppose that $f_m(\bm a_m, \bm x) = \bm a_m^\top \bm x$, 
$g_t(\bm a_m) = \bm e_{mt}^\top \bm a_m$, and 
$\mathcal{A}_m = \{\bm a_m \in \mathbb{R}^n : \bm D_m \bm a_m 
\preceq_{\mathcal{K}_m} \bm d_m\}$, where $\mathcal{K}_m$ is a proper 
cone with dual cone $\mathcal{K}_m^*$. Assume further that, for each 
$m \in [M]$, (i) each $\mathcal{A}_m$ satisfies the conic Slater 
condition, i.e., there exists $\bm{a}_m^0 \in \mathbb{R}^n$ with 
$\bm d_m - \bm D_m \bm{a}_m^0 \in \mathrm{int}(\mathcal{K}_m)$, and 
(ii) $\mathcal{P}_m$ is strictly feasible, i.e., it contains a 
probability distribution under which every moment constraint holds 
strictly. Then, the standard DRO in~\eqref{eq_DRO} is equivalent to the following optimization problem in $\bm x$, $\{\bm p_m\}_{m\in[M]}$, and $\{\bm\beta_m\}_{m\in[M]}$:
\begin{subequations}
\label{eq_DRO_reform}
    \begin{align}
        \min_{\bm x \in \mathcal{X}} & \quad \bm{c}^\top \bm{x} \\
        \text{s.t.} \ & \Big\{\bm \beta_m^\top \bm \gamma_m + 
            \bm p_m^\top \bm d_m \leq b_m, \ \bm D_m^\top \bm p_m = 
            \bm x - \bm E_m^\top \bm \beta_m, \ \bm p_m \in 
            \mathcal{K}_m^*, \ \bm \beta_m \in \mathbb{R}_+^{T_m}\Big\}, 
            \ \forall m \in [M].
    \end{align}
\end{subequations}
\end{theorem}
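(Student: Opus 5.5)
We prove the equivalence constraint by constraint. Fix $m\in[M]$ and $\bm x\in\mathcal X$. The plan is to show that the worst-case expectation in \eqref{DRO_constraint} satisfies $\sup_{\mathbb P_m\in\mathcal P_m}\mathbb E_{\mathbb P_m}[\bm a_m^\top\bm x]\le b_m$ if and only if there exist $\bm\beta_m\ge 0$ and $\bm p_m\in\mathcal K_m^*$ satisfying the system in \eqref{eq_DRO_reform}. We use two sequential dualizations. The first is over the probability measure and is justified by assumption (ii). The second is over the sample space and is justified by assumption (i) via Theorem~\ref{linear-robust}.

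\emph{Step 1 (moment-problem duality).} View the inner problem as a linear program over $\mathcal M_+(\mathcal A_m,\mathcal F_m)$. Its objective is $\int \bm a^\top\bm x\,d\mathbb P$, subject to $\int d\mathbb P=1$ and $\int \bm e_{mt}^\top\bm a\,d\mathbb P\le\gamma_{mt}$. Attach a free multiplier $\alpha$ to normalization and $\beta_{mt}\ge0$ to the moment constraints. Weak duality follows directly. If $\alpha+\bm\beta^\top(\bm E_m\bm a)\ge \bm a^\top\bm x$ for all $\bm a\in\mathcal A_m$, then integrating against any feasible $\mathbb P$ gives $\mathbb E_{\mathbb P}[\bm a^\top\bm x]\le \alpha+\bm\beta^\top\bm\gamma_m$. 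Integrability is guaranteed by Definition~\ref{ambiguity_set_define}. Hence the dual is
\[
\inf_{\alpha\in\mathbb R,\ \bm\beta\ge0}\Big\{\alpha+\bm\beta^\top\bm\gamma_m:\ \sup_{\bm a\in\mathcal A_m}(\bm x-\bm E_m^\top\bm\beta)^\top\bm a\le\alpha\Big\}.
\]
For strong duality (zero gap and dual attainment), invoke the classical result for generalized moment problems (Shapiro/Isii-type, as used in \cite{wiesemann2014distributionally}). The Slater-type condition for this result is precisely the strict feasibility of $\mathcal P_m$ in assumption (ii).

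\emph{Step 2 (sample-space duality).} For fixed $\bm\beta$, the semi-infinite constraint is an RO constraint with linear kernel $(\bm x-\bm E_m^\top\bm\beta)^\top\bm a$ over the conic set $\mathcal A_m$. By Theorem~\ref{linear-robust}, which uses the conic Slater condition in (i), $\sup_{\bm a\in\mathcal A_m}(\bm x-\bm E_m^\top\bm\beta)^\top\bm a\le\alpha$ holds iff there is $\bm p_m\in\mathcal K_m^*$ with $\bm D_m^\top\bm p_m=\bm x-\bm E_m^\top\bm\beta$ and $\bm p_m^\top\bm d_m\le\alpha$. Eliminating $\alpha$, the constraint $\sup\le b_m$ becomes equivalent to the existence of $(\bm\beta_m,\bm p_m)$ with $\bm\beta_m^\top\bm\gamma_m+\bm p_m^\top\bm d_m\le b_m$ together with the stated linear-conic constraints. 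This is the system in \eqref{eq_DRO_reform}. Putting Steps 1 and 2 together for every $m$ gives the same feasible set in $\bm x$ and the same objective, which yields the equivalence.

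\emph{Main obstacle.} The delicate point is Step 1. Two things must be established there. First, the duality gap vanishes. Second, the dual infimum is attained, so that ``$\inf\le b_m$'' can be replaced by ``there exist $\bm\beta,\alpha$ with value $\le b_m$.'' This matters because $\mathcal A_m$ may be unbounded. My first attempt would use the conic-hull argument. Consider the convex cone of moment vectors $\{(\int d\mu,\int \bm E_m\bm a\,d\mu,\int \bm a^\top\bm x\,d\mu)\}$ generated by finitely supported measures. Strict feasibility puts $(1,\bm\gamma_m)$ in the relative interior of its projection, and a separating hyperplane argument then yields a dual optimal $(\alpha,\bm\beta)$ attaining the value. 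If the primal value is $+\infty$, the dual is infeasible and both sides reject $\bm x$, so that case is consistent. Step 2 can also require attainment when the sup is finite, but Theorem~\ref{linear-robust} already supplies this under conic Slater.
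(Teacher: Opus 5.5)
Your proposal is correct and follows the paper's proof essentially step for step. The paper also dualizes first over the probability measure, obtaining zero duality gap and, when the value is finite, dual attainment from Proposition~3.4 of Shapiro with strict feasibility of $\mathcal P_m$ as the constraint qualification. It then converts the resulting semi-infinite constraint over $\mathcal A_m$ into the conic certificate $\bm p_m$ via the conic Slater argument of Theorem~\ref{linear-robust}, eliminating $\alpha_m$ exactly as you do.

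The one difference is that the paper checks the constraint qualification directly: scaling the strictly feasible distribution shows that $(1,\bm\gamma_m^\top)^\top$ lies in the interior of the set of moment vectors of finite nonnegative measures minus $\{0\}\times\mathbb R_-^{T_m}$. Your conic-hull side argument is therefore not needed. As you phrase it, ``relative interior of the projection'' should also be replaced by this interior condition.
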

\emph{Proof.} See Appendix~\ref{app:proof-thm34}.

\begin{remark}
    When $\mathcal{K}_m$ is a standard tractable cone (e.g., polyhedral, second-order, or semidefinite) and $\mathcal X$ also has a polynomial-size representation over the listed tractable cones, the reformulation \eqref{eq_DRO_reform} can be solved to prescribed accuracy by interior-point methods under standard conic-complexity assumptions. In particular, it reduces to an LP if both $\mathcal{K}_m$ and $\mathcal X$ are polyhedral.
\end{remark}

Theorem~\ref{linear-dro} can be viewed as a streamlined counterpart of the broader one in \cite{wiesemann2014distributionally}, which accommodates constraint functions that are convex and piecewise affine in both the decision variables and the random parameters. Ours, on the other hand, focuses on a cleaner setting with linear costs, linear moment functions, and a conic-representable sample space, to make the relationship between standard DRO and RO reformulations easier to see. By comparing \eqref{eq_DRO_reform} and  \eqref{eq_RO_duality_reform}, we note that the reformulation of standard DRO indeed shares a significant similarity to its RO counterpart. Yet, 
we also note that there is a clear difference due to the extra dual vector \( \boldsymbol{\beta} \) in the DRO's reformulation in \eqref{eq_DRO_reform},  which is introduced for dualizing the probability measure. A rather detailed comparison between reformulations of RO and DRO is provided in  Table~\ref{table:RO_DRO_comparison}.

\begin{table}[t]
\centering
\caption{Comparison between RO and Standard DRO}
\label{table:RO_DRO_comparison}
\small
\begin{tabular}{lcc}
\toprule
\textbf{Aspect} & \textbf{RO} & \textbf{DRO} \\
\midrule
Uncertainty set/sample space $\mathcal{A}_m$
& \multicolumn{2}{c}{$\{\bm a_m \in \mathbb{R}^n : \bm D_m \bm a_m \preceq_{\mathcal{K}_m} \bm d_m\}$} \\
Dual variables
& $\bm p_m$
& $\bm\beta_m,\, \bm p_m$ \\
Dualization
& Scenario dual
& Bi-dual (probability + scenario) \\
 Expression dualized over $\bm a_m$
& $f_m(\bm a_m,\bm x)$
& $f_m(\bm a_m,\bm x) - \bm\beta_m^\top g(\bm a_m)$ \\
\midrule
\multicolumn{3}{l}{\emph{Reformulation by $\mathcal{A}_m$ / $\mathcal{K}_m$}} \\
\quad Polyhedron ($\mathcal{A}_m = \{\bm a_m : \bm D_m \bm a_m \leq \bm d_m\}$) & LP & LP \\
\quad Polyhedral / second-order / semidefinite cone   & LP / SOCP / SDP & LP / SOCP / SDP \\
\bottomrule
\end{tabular}
\end{table}

\begin{remark}
A few structural observations follow from comparing the two reformulations.  First, RO dualizes only over the uncertainty set, whereas DRO requires a ``bi-dual'' process: first over the probability measure and then over the sample space. Second, the duality argument concerns the remaining maximization over $\bm a_m$: the relevant expression is $f_m(\bm a_m, \bm x)$ in RO and $f_m(\bm a_m, \bm x) - \sum_{t \in [T_m]} \beta_{mt} g_t(\bm a_m)$ in DRO. Thus, no convexity of the formulation in $\bm x$ is required for this inner dualization step. Finally, when $\mathcal{A}_m$ is discrete and finite, both reformulations simplify considerably: the RO constraints can be enumerated directly without invoking Theorem~\ref{linear-robust}, and the DRO reformulation can stop after the first dualization step over the probability measure, since all scenarios can then be listed explicitly.
\end{remark}

The aforementioned observations and comparison help clarify how RO and DRO reformulations relate to each other under comparable assumptions. Naturally, the bi-dual structure in the DRO reformulation may be subject to the same potential restrictions as the purely dual perspective in RO. While the reformulation is tractable for small-scale convex models, it becomes much more challenging in problems involving covariance terms or general bilinearities. Although modern solvers handle linear and conic programs efficiently, their performance typically deteriorates in these more demanding settings. As noted in our discussions on RO, and even more so in DRO, a key limitation of the dual perspective is its lack of intuitive interpretability. Specifically, the worst-case distribution is represented only implicitly through dual variables, which makes it difficult to explicitly characterize or analyze. In the next subsection, we take the primal perspective and develop a decomposition algorithm that often offers computational advantage and intuitive understanding.

\subsection{A Primal-based Decomposition Method for Standard DRO} \label{BiCS section}

As discussed earlier, RO and DRO share a similar worst-case structure. Given that dual reformulation techniques have been successfully extended from RO to the DRO setting, a natural question is whether primal-based methodology can also be extended from RO to DRO. Despite their structural similarity, the two paradigms differ fundamentally in how robustness is enforced: RO protects against worst-case \emph{scenarios}, whereas DRO protects against worst-case \emph{distributions}. This suggests that a genuine primal extension of cutting-plane methods to DRO should operate directly at the distribution level. 

The main challenge, however, is that when the sample space is continuous, the worst-case distribution is a probability measure defined over infinitely many points, and optimizing directly over such an infinite-dimensional space is generally computationally intractable. Theorem \ref{thm:discretize_unbounded} shows that, under the given assumptions, the expected value under any fixed distribution can be approximated arbitrarily closely by finite-support probability measures.

\begin{theorem}[Finite-support representation of an expected value]
\label{thm:discretize_unbounded}
Let $\mathcal{A}\subseteq\mathbb{R}^d$ be a measurable set, $h:\mathcal{A}\to\mathbb{R}$ a measurable function, and $\mathbb{P}$ a Borel probability measure on $\mathcal{A}$ with $\int_{\mathcal{A}}|h|\,d\mathbb{P}<\infty$. Then there exist a sequence of finite sets $\mathcal{A}_n^*=\{\bm a_1^{(n)},\ldots,\bm a_n^{(n)}\}\subseteq\mathcal{A}$ and nonnegative weights $\{w_k^{(n)}\}_{k=1}^n$ with $\sum_{k=1}^n w_k^{(n)}=1$ such that
\[
\mathbb{E}_{\mathbb{P}}[h(\bm a)]
=
\int_{\mathcal{A}}h(\bm a)\,\mathbb{P}(d\bm a)
=
\lim_{n\to\infty}\sum_{k=1}^n
w_k^{(n)}h\bigl(\bm a_k^{(n)}\bigr),
\]
or, equivalently, $\lim_{n\to\infty} \left| \mathbb{E}_{\mathbb{P}}[h(\bm a)] - \sum_{k=1}^n w_k^{(n)}h\bigl(\bm a_k^{(n)}\bigr) \right|
=0.$
\end{theorem}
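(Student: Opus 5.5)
Since $h$ is integrable with respect to $\mathbb P$, the natural approach is to approximate the integral by integrals of simple functions and then realize each simple-function integral as a finite weighted sum of point evaluations of $h$ itself. The subtlety is that the theorem requires the sums to involve $h(\bm a_k^{(n)})$ at actual points of $\mathcal A$, not the values of an approximating simple function. We therefore use a partition of $\mathcal A$ into level sets of $h$ rather than a generic simple approximation.

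\textbf{Step 1 (level-set partition).} For each $N\ge 1$, consider the sets
\[
B_{N,j}=\{\bm a\in\mathcal A:\ j/N\le h(\bm a)<(j+1)/N\},\qquad j\in\{-N^2,\dots,N^2-1\},
\]
together with the tail set $C_N=\{|h|\ge N\}$. These are Borel and partition $\mathcal A$. On each nonempty $B_{N,j}$, pick any point $\bm a_{N,j}\in B_{N,j}$ and assign it weight $\mathbb P(B_{N,j})$. Then $h$ differs from $h(\bm a_{N,j})$ by less than $1/N$ on $B_{N,j}$, so
\[
\Bigl|\int_{\mathcal A\setminus C_N} h\,d\mathbb P-\sum_j \mathbb P(B_{N,j})\,h(\bm a_{N,j})\Bigr|\le 1/N .
\]

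\textbf{Step 2 (tail).} The tail set $C_N$ is handled through integrability. By dominated convergence, $\int_{C_N}|h|\,d\mathbb P\to 0$, and by Markov's inequality, $\mathbb P(C_N)\le N^{-1}\int|h|\,d\mathbb P\to 0$. The remaining mass $\mathbb P(C_N)$ must be placed somewhere so that the weights sum to one, but it should not introduce error. Placing it at an arbitrary point $\bm a_0$ with $|h(\bm a_0)|$ bounded would work but needs such a point to exist. A cleaner choice is to split $C_N$ into its positive and negative parts and, since $C_N$ is a single set, pick a point $\bm c_N\in C_N$ with $|h(\bm c_N)|\le \mathbb P(C_N)^{-1}\int_{C_N}|h|\,d\mathbb P$. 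Such a point exists because a function cannot exceed its average everywhere on a set of positive measure. Assigning mass $\mathbb P(C_N)$ to $\bm c_N$ then contributes at most $\int_{C_N}|h|\,d\mathbb P$ in absolute value. The total error is therefore bounded by
\[
\frac{1}{N}+2\int_{C_N}|h|\,d\mathbb P\;\longrightarrow\;0 .
\]
If $\mathbb P(C_N)=0$, this step is simply skipped.

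\textbf{Step 3 (indexing).} The construction at level $N$ uses at most $K_N=2N^2+1$ points, but the theorem indexes the approximations by the number of points $n$. To match this, choose $N(n)$ as the largest $N$ with $K_N\le n$. Pad the $K_{N(n)}$ atoms to exactly $n$ points by repeating an existing atom with weight zero, or by splitting its weight, neither of which changes the sum. This padding is valid for $n\ge 1$; for small $n$ where $N(n)=0$, take any single point with weight one. Since $N(n)\to\infty$, the error tends to zero.

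I expect the main obstacle to be Step 2: ensuring that the leftover mass on the tail does not spoil convergence while all weights remain nonnegative and sum to one. The mean-value choice of $\bm c_N$ resolves this. Measurability is immediate because $h$ is measurable, and no topological assumption on $\mathcal A$ is needed.
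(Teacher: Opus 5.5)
Your proof is correct and follows essentially the paper's route: both truncate the range of $h$, split it into cells of width $1/N$ with one representative point each, and reach the same bound $1/N+2\int_{C_N}|h|\,d\mathbb P$. The only real difference is the leftover tail mass $\mathbb P(C_N)$: you place it at a mean-value point of $C_N$, whereas the paper renormalizes the cell weights by $\mathbb P(T_n)$ and absorbs the resulting error via $n\,\mathbb P(\mathcal A\setminus T_n)\le\int_{\mathcal A\setminus T_n}|h|\,d\mathbb P$.

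Your Step~3 re-indexing to exactly $n$ atoms handles a point the paper's proof leaves implicit, since its level-$n$ construction uses up to $2n^2$ atoms. The one nit is that $B_{N,-N^2}$ and $C_N$ overlap on $\{h=-N\}$, so the weights can sum to more than one; taking $C_N:=\{h<-N\}\cup\{h\ge N\}$ gives a genuine partition.
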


\emph{Proof.} See Appendix~\ref{app:proof-thm3}.

\begin{remark}
When the sample space $\mathcal{A}_m$ is discrete and finite, Theorem~\ref{thm:discretize_unbounded} holds naturally, as any distribution is already supported on a finite set of scenarios.
\end{remark}

Theorem~\ref{thm:discretize_unbounded} provides the theoretical basis for approximating the expectation under any fixed distribution using a finite scenario set together with probability weights on those scenarios. This representation clarifies a structural point that is easily obscured in continuous formulations: a probability weight has meaning only as mass assigned to a specific scenario, so the scenario set must be available before weights can be meaningfully optimized. Existing decomposition approaches have long relied on reformulating the distributional layer and then solving the resulting finite or semi-infinite formulation by delayed constraint generation or scenario-indexed separation \cite{yang2018distributionally, luo2019decomposition}, thereby preserving a familiar RO-style iterative solution structure.

Our primal construction instead works directly with the finite-support representation and follows its natural order: scenarios first, weights second. This viewpoint leads to two design choices: how much support information is transferred from each subproblem solve and how the retained support and probability weights are represented in the master problem.

\subsubsection{Bilevel Cutting Set (BiCS) Algorithm}

Just as RO generates \emph{scenario cuts} to iteratively refine its protection against adversarial scenarios, BiCS generates \emph{distribution cuts} to iteratively refine its protection against adversarial distributions. In the primal representation suggested by Theorem~\ref{thm:discretize_unbounded}, a distribution cut naturally decomposes into two components: a finite scenario set, which serves as the core of the distribution on which probability mass is defined, and the probability weights assigned to those scenarios. The algorithm of a distribution cut therefore proceeds in two levels: the scenario set is identified first, and the weights are optimized on top of it. We call this the \textbf{Bilevel Cutting Set (BiCS)} framework.

For any finite $\hat A\subseteq\mathcal A_m$, define 
$\mathcal P_m(\hat A):=\{\mathbb P_m\in\mathcal P_m:\operatorname{supp}(\mathbb P_m)\subseteq\hat A\}$. At each iteration, the two components of a distribution cut are handled under a shared principle: the probability measure is treated as a decision variable constrained to lie in $\mathcal{P}_m$ at both levels. The \textbf{subproblem} \eqref{SubProblem}, given the  $\bm{x}^*$, searches over the full $\tilde{\mathbb{P}}_m \in \mathcal{P}_m$ and returns its supporting scenarios to enrich $\hat A_m$. The \textbf{master problem} \eqref{MasterProblem}, given the current $\hat A_m$, jointly optimizes $\bm{x}$ and probability weights in $\mathcal P_m(\hat A_m)$. Because the weights are re-optimized rather than frozen, the same $\hat A_m$ can support different worst-case distributions as $\bm{x}$ changes, yielding what we call a \emph{pooled parametric distribution cut}: scenarios as a persistent core, weights adapting to each candidate decision while remaining feasible for $\mathcal{P}_m$. We detail the framework in Algorithm~\ref{BiCSalgorithm} with convergence established in Theorem~\ref{convergence-BiCS}.

\begin{algorithm}[H]
\footnotesize
\setlength{\itemsep}{1pt}
\caption{\textbf{\textit{Bilevel Cutting Set (BiCS) Framework}}} \label{BiCSalgorithm}
\begin{algorithmic}
\State \textbf{Step 0:} Initialize the \emph{master problem (MP-DRO)} and set the optimality tolerance \( \epsilon>0 \).
\State \textbf{Step 1:} For each $m\in[M]$, select a finite initial set $\hat A_m=\{\bm a_{mk}:k=1,\ldots,K_m\}\subseteq\mathcal A_m$.
\State \textbf{Step 2:} \textbf{Solving Master Problem.} Solve the MP-DRO to obtain an optimal solution \(\bm x^*\):
\begin{subequations} \label{MasterProblem}
\begin{align} 
    \textbf{MP-DRO:}\quad \min_{\bm{x} \in \mathcal{X}} & \quad \bm{c}^\top \bm{x},\\[1ex]
    \text{s.t.} \quad & \max_{\mathbb P_m \in \mathcal{P}_m(\hat A_m)} \sum_{k=0}^{K_m} P_{mk}\, f_m(\bm{a}_{mk}, \bm{x}) \leq b_m,\hspace{2mm} \forall m \in [M].\label{inner_max_constraint}
\end{align}
\end{subequations}
\State \textbf{Step 3:} \textbf{Solving Subproblem.} For each $m$, use an $\epsilon/2$-oracle for the \emph{subproblem (SP-DRO)}:
\Statex \hspace{4em}\textbf{for each row $m\in[M]$ do}
\begin{align} \label{SubProblem}
    \textbf{SP-DRO:} \quad \Omega^*_m(\bm x^*) &= \sup_{\tilde{\mathbb{P}}_m \in \mathcal{P}_m} \; \mathbb{E}_{\tilde{\mathbb{P}}_m}\left[ f_m(\bm{a}_m,\bm x^*) \right]
\end{align}
\Statex \hspace{6em}\parbox[t]{\dimexpr\linewidth-6em\relax}{Let $\tilde{\mathbb P}_m^*\in\mathcal P_m$ be the feasible finitely supported distribution returned by the oracle, let $\tilde A_m^*:=\operatorname{supp}(\tilde{\mathbb P}_m^*)$, and let $\hat\Omega_m(\bm x^*)$ denote the associated oracle value.}
\Statex \hspace{6em}\textbf{if $\hat\Omega_m(\bm x^*)>b_m+\epsilon/2$, then} update $\hat A_m\gets\hat A_m\cup\tilde A_m^*$ and $K_m\gets|\hat A_m|-1$.
\State \textbf{Step 4:} Stopping criterion:
\Statex \hspace{4em}\textbf{if} $\max_{m\in[M]}\{\hat\Omega_m(\bm x^*)-b_m\}\le\epsilon/2$ \textbf{then}
\Statex \hspace{6em}Terminate and return $\bm x^*$.
\Statex \hspace{4em}\textbf{else}
\Statex \hspace{6em}Return to Step~2.
\end{algorithmic}
\end{algorithm}

Suppose problem~\eqref{eq_DRO} has a finite optimal value. We call $\bm x\in\mathcal X$ $\epsilon$-feasible if $\Omega_m^*(\bm x)\le b_m+\epsilon$ for all $m\in[M]$. For the candidate solutions generated by Algorithm~1 that are optimal to MP-DRO, $\epsilon$-feasibility is sufficient to establish $\epsilon$-optimality, consistent with the convention adopted in \cite{mehrotra2014cutting,luo2019decomposition}.

\begin{theorem}[Convergence of Algorithm~\ref{BiCSalgorithm}] 
\label{convergence-BiCS}
Let $\epsilon>0$. Suppose problem~\eqref{eq_DRO} has a finite optimal value, every invoked MP-DRO attains a finite optimum, and, whenever Step~3 is invoked, the oracle returns a feasible finite-support distribution $\tilde{\mathbb P}_m^*\in\mathcal P_m$ satisfying
\[
\Omega_m^*(\bm x)-\epsilon/2\le\hat\Omega_m(\bm x):=\mathbb E_{\tilde{\mathbb P}_m^*}[f_m(\bm a_m,\bm x)]\le\Omega_m^*(\bm x),
\qquad m\in[M].
\]
If Algorithm~\ref{BiCSalgorithm} terminates by the Step~4 stopping criterion, it returns an $\epsilon$-optimal solution to problem~\eqref{eq_DRO}. Moreover, if $\mathcal{X}$ is compact, 
$f_m$ is jointly continuous on $\mathcal{A}_m \times \mathcal{X}$, and the supporting scenarios of distributions added in Step~3 lie in a compact subset $\mathcal{C}_m \subseteq \mathcal{A}_m$ uniformly over generated incumbents, for each $m \in [M]$, then 
Algorithm~\ref{BiCSalgorithm} terminates in a finite number of iterations.
\end{theorem}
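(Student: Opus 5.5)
The argument has two parts: $\epsilon$-optimality upon termination, and finite termination under the compactness and continuity hypotheses.

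\textbf{$\epsilon$-optimality.} First I will show that MP-DRO is a relaxation of \eqref{eq_DRO}. For any finite $\hat A_m\subseteq\mathcal A_m$ we have $\mathcal P_m(\hat A_m)\subseteq\mathcal P_m$. Moreover, for $\mathbb P_m\in\mathcal P_m(\hat A_m)$ the sum $\sum_k P_{mk}f_m(\bm a_{mk},\bm x)$ is exactly $\mathbb E_{\mathbb P_m}[f_m(\bm a_m,\bm x)]$. Hence the left-hand side of \eqref{inner_max_constraint} is at most $\Omega_m^*(\bm x)$. Every $\bm x$ feasible for \eqref{eq_DRO} is therefore feasible for MP-DRO, so $\bm c^\top\bm x^*\le$ the optimal value of \eqref{eq_DRO} at every iteration. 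If Step~4 stops, the oracle bound gives
\[
\Omega_m^*(\bm x^*)\le\hat\Omega_m(\bm x^*)+\epsilon/2\le b_m+\epsilon \quad\text{for all } m.
\]
So $\bm x^*$ is $\epsilon$-feasible, and its objective is no worse than the optimum. This is $\epsilon$-optimality in the stated convention.

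\textbf{Finite termination.} I will argue by contradiction. Suppose the algorithm runs forever, producing incumbents $\bm x^{(j)}$. At iteration $j$ some row $m$ (pass to a subsequence where $m$ is fixed) has
\[
\hat\Omega_m(\bm x^{(j)})>b_m+\epsilon/2,
\]
and the support $\tilde A^{(j)}\subseteq\mathcal C_m$ of the returned distribution $\tilde{\mathbb P}^{(j)}$ is added to $\hat A_m$.

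The key observation is that at every later iteration $i>j$, the distribution $\tilde{\mathbb P}^{(j)}$ lies in $\mathcal P_m$ and is supported on $\hat A_m$. It is therefore feasible in the inner max of MP-DRO, and so
\[
\mathbb E_{\tilde{\mathbb P}^{(j)}}[f_m(\cdot,\bm x^{(i)})]\le b_m.
\]
At $\bm x^{(j)}$ itself,
\[
\mathbb E_{\tilde{\mathbb P}^{(j)}}[f_m(\cdot,\bm x^{(j)})]=\hat\Omega_m(\bm x^{(j)})>b_m+\epsilon/2.
\]
Subtracting the two gives, for all $i>j$,
\[
\mathbb E_{\tilde{\mathbb P}^{(j)}}\bigl[f_m(\cdot,\bm x^{(j)})-f_m(\cdot,\bm x^{(i)})\bigr]>\epsilon/2.
\]

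Now I use compactness. Since $f_m$ is jointly continuous on the compact set $\mathcal C_m\times\mathcal X$, it is uniformly continuous there. So there is $\delta>0$ such that $\|\bm x-\bm x'\|<\delta$ implies
\[
\sup_{\bm a\in\mathcal C_m}|f_m(\bm a,\bm x)-f_m(\bm a,\bm x')|<\epsilon/2.
\]
Because each $\tilde{\mathbb P}^{(j)}$ is a probability measure supported in $\mathcal C_m$, the expectation of the difference is bounded by this supremum. Hence $\|\bm x^{(i)}-\bm x^{(j)}\|\ge\delta$ for all $i>j$ in the subsequence. But $\mathcal X$ is compact, so the sequence $\{\bm x^{(j)}\}$ has a convergent, hence Cauchy, subsequence. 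That contradicts the uniform $\delta$-separation.

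\textbf{Main obstacle.} The delicate step is the feasibility of the previously generated distribution $\tilde{\mathbb P}^{(j)}$ in later master problems. It relies on $\hat A_m$ only growing, and on the oracle returning a distribution that is genuinely in $\mathcal P_m$, not merely approximately feasible. Both are guaranteed by the algorithm and the hypotheses. The bound must also be uniform over all of $\mathcal C_m$ rather than pointwise, which is exactly why the compact set $\mathcal C_m$ must be uniform over incumbents.
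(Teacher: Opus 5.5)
Your proof is correct and follows essentially the same route as the paper: $\epsilon$-optimality comes from the relaxation property of MP-DRO plus the oracle bound, and finite termination comes from the pigeonhole choice of a row, the persistence of previously generated distributions in later master problems, and uniform continuity of $f_m$ on $\mathcal C_m\times\mathcal X$. The only cosmetic difference is the order of the final step: you first show the incumbents are pairwise $\delta$-separated and then contradict compactness, while the paper first extracts a convergent subsequence and then picks two nearby iterates.
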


\begin{proof}
The proof has two parts. Part~(i) shows that any terminating incumbent is $\epsilon$-optimal, using the theorem's well-posedness premises and the Step~3 oracle condition. Part~(ii) rules out non-termination via a contradiction that draws on the additional compactness and continuity conditions.

\medskip
\noindent\textbf{Part (i): Correctness upon termination.}\;
Suppose the algorithm terminates at $\bm{x}^*$. By the Step~4 stopping criterion and the oracle condition, for every $m\in[M]$,
\[
    \Omega_m^*(\bm{x}^*) 
    \;\le\; \hat{\Omega}_m(\bm{x}^*) + \epsilon/2 
    \;\le\; b_m + \epsilon.
\]
Thus every worst-case constraint of~\eqref{eq_DRO} is satisfied to tolerance $\epsilon$. For the objective, $\mathcal P_m(\hat A_m)\subseteq\mathcal P_m$, so MP-DRO~\eqref{MasterProblem} is a relaxation of~\eqref{eq_DRO}; hence its finite optimum $\bm{c}^\top \bm{x}^*$ does not exceed the optimal value of~\eqref{eq_DRO}. Therefore $\bm{x}^*$ is $\epsilon$-optimal for~\eqref{eq_DRO}.

\medskip
\noindent\textbf{Part (ii): Finite termination.}\;
Suppose, for contradiction, that Algorithm~\ref{BiCSalgorithm} generates an infinite sequence $\{\bm{x}_k\}_{k \ge 1} \subseteq \mathcal{X}$. Every nonterminating iteration adds support for at least one row from a finite-support feasible distribution whose actual value exceeds $b_m+\epsilon/2$. Since $[M]$ is finite, the pigeonhole principle yields an index $m^*$ whose support is updated infinitely often; passing to a subsequence and using compactness of $\mathcal{X}$, we may assume that row $m^*$ is updated at every iteration and $\bm{x}_k \to \bm{x}^* \in \mathcal{X}$.

For each such $k$, let $\tilde{\mathbb{P}}^*_{m^*, k}$ be the finite-support distribution whose support is added for row $m^*$, supported on $\{\bm{a}^{(k)}_{\ell}\}_{\ell} \subseteq \mathcal{C}_{m^*}$ with weights $\{P^{(k)}_{\ell}\}_{\ell}$. Then
\begin{equation} \label{eq:oracle_approx}
    \sum_{\ell} P^{(k)}_{\ell}\, f_{m^*}(\bm{a}^{(k)}_{\ell}, \bm{x}_k) 
    \;=\; \hat{\Omega}_{m^*}(\bm{x}_k) 
    \;>\; b_{m^*} + \epsilon/2,
\end{equation}
and Step~3 absorbs $\{\bm{a}^{(k)}_{\ell}\}_{\ell}$ into $\hat{A}_{m^*}$. 
Hence, at any later iteration $k' > k$, the distribution 
$\tilde{\mathbb{P}}^*_{m^*, k}$, when extended by zero mass to scenarios added afterwards, remains feasible in the inner maximization of MP-DRO; therefore, feasibility of $\bm{x}_{k'}$ yields $\sum_{\ell} P^{(k)}_{\ell}\, f_{m^*}(\bm{a}^{(k)}_{\ell}, \bm{x}_{k'}) \;\le\; b_{m^*}.$ 

The product set $\mathcal{C}_{m^*} \times \mathcal{X}$ is compact, so $f_{m^*}$ is uniformly continuous on it. Pick $\delta > 0$ such that $\|\bm{x} - \bm{x}'\| \le \delta$ implies $|f_{m^*}(\bm{a}, \bm{x}) - f_{m^*}(\bm{a}, \bm{x}')| < \epsilon/2$ for all $\bm{a} \in \mathcal{C}_{m^*}$. Since $\bm{x}_k \to \bm{x}^*$, choose $k' > k$ large enough that $\|\bm{x}_k - \bm{x}_{k'}\| \le \delta$. Multiplying by $P^{(k)}_{\ell} \ge 0$, summing over $\ell$ and using MP-DRO feasibility, we have
\[
    \sum_{\ell} P^{(k)}_{\ell}\, f_{m^*}(\bm{a}^{(k)}_{\ell}, \bm{x}_k)
    \;<\; \sum_{\ell} P^{(k)}_{\ell} \, f_{m^*}(\bm{a}^{(k)}_{\ell}, \bm{x}_{k'}) + \epsilon/2 \sum_{\ell} P^{(k)}_{\ell}
    \;\le\; b_{m^*} + \epsilon/2,
\]
contradicting~\eqref{eq:oracle_approx}. Hence the algorithm terminates in finitely many iterations.
\end{proof}

\begin{remark}[Infeasibility detection]
If MP-DRO becomes infeasible at some iteration, Algorithm~\ref{BiCSalgorithm} terminates and problem~\eqref{eq_DRO} is declared infeasible because MP-DRO is a relaxation of~\eqref{eq_DRO}.
\end{remark}

\begin{remark}[Exact convergence]
\label{rem:exact-BiCS}
Under the remaining well-posedness premises of Theorem~\ref{convergence-BiCS}, consider the zero-tolerance variant obtained by replacing Step~0 with $\epsilon=0$. If every \textnormal{SP-DRO} oracle is exact and this variant terminates by the stopping criterion, then the returned solution is optimal for~\eqref{eq_DRO}.
\end{remark}

\begin{corollary}[Discrete $\mathcal{X}$]
\label{cor:discrete-X}
Under the conditions of Theorem~\ref{convergence-BiCS}, if $\mathcal{X}$ is compact and discrete, then Algorithm~\ref{BiCSalgorithm} terminates finitely and returns an $\epsilon$-optimal solution to~\eqref{eq_DRO}.
\end{corollary}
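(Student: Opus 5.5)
The corollary follows by combining Part~(i) of Theorem~\ref{convergence-BiCS} with a finite-termination argument that uses only the discreteness of $\mathcal{X}$. The joint continuity and uniform compactness of the support sets $\mathcal C_m$ are not needed. The key observation is that a compact discrete subset of $\mathbb{R}^n$ is finite: the singletons $\{\bm x\}$, $\bm x\in\mathcal X$, form an open cover of $\mathcal X$ in the relative topology, and compactness extracts a finite subcover. Hence $\mathcal X=\{\bm x^1,\ldots,\bm x^N\}$ for some finite $N$.

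Finite termination comes from showing that no incumbent can repeat across nonterminating iterations. Suppose iteration $k$ does not terminate at $\bm x_k$. Then some row $m$ satisfies $\hat\Omega_m(\bm x_k)>b_m+\epsilon/2$. Step~3 adds $\operatorname{supp}(\tilde{\mathbb P}^*_{m,k})$ to $\hat A_m$, and
\[
\mathbb E_{\tilde{\mathbb P}^*_{m,k}}[f_m(\bm a_m,\bm x_k)]>b_m+\epsilon/2 .
\]
At every later iteration $k'>k$, the distribution $\tilde{\mathbb P}^*_{m,k}$ remains feasible for the inner maximization in \eqref{inner_max_constraint}. It lies in $\mathcal P_m$, and after extension by zero mass its support is contained in $\hat A_m$, so it lies in $\mathcal P_m(\hat A_m)$. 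Any MP-DRO-feasible $\bm x_{k'}$ must therefore satisfy
\[
\mathbb E_{\tilde{\mathbb P}^*_{m,k}}[f_m(\bm a_m,\bm x_{k'})]\le b_m .
\]
This is exactly the persistence step from Part~(ii) of the proof of Theorem~\ref{convergence-BiCS}. The two displays are incompatible if $\bm x_{k'}=\bm x_k$, so $\bm x_{k'}\neq\bm x_k$. Consequently the incumbents produced at nonterminating iterations are pairwise distinct elements of the finite set $\mathcal X$. There are at most $N$ such iterations, and the algorithm must stop through Step~4 (or through infeasibility of MP-DRO, which correctly certifies that \eqref{eq_DRO} is infeasible).

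Once the algorithm stops via the Step~4 criterion, Part~(i) of Theorem~\ref{convergence-BiCS} applies directly and gives $\epsilon$-optimality. Two premises are needed for this. First, each MP-DRO has a finite optimum; this is part of the hypotheses of Theorem~\ref{convergence-BiCS}, and in any case a feasible MP-DRO attains its optimum because it minimizes over finitely many points. Second, the oracle condition of Step~3 holds; this is also among the inherited hypotheses.

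The only step needing care is the persistence argument: the distribution kept from iteration $k$ must remain feasible in $\mathcal P_m(\hat A_m)$ even though $\hat A_m$ has since been enlarged. This holds because $\hat A_m$ only grows and membership in $\mathcal P_m$ does not depend on $\hat A_m$. Beyond that, the proof reduces to the pigeonhole argument on the finite set $\mathcal X$. As an alternative, one could note that every function on a discrete $\mathcal X$ is continuous, so the continuity argument of Part~(ii) goes through with $\delta$ smaller than the minimal pairwise distance between points of $\mathcal X$. The direct no-repeat argument is cleaner because it also avoids the compactness assumption on $\mathcal C_m$.
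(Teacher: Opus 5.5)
Your proposal is correct and follows essentially the same route as the paper: $\mathcal X$ is finite, and the support added at a nonterminating iterate keeps its distribution feasible in every later MP-DRO, so a repeated incumbent would contradict MP-DRO feasibility, after which Part~(i) of Theorem~\ref{convergence-BiCS} supplies $\epsilon$-optimality. The paper argues by contradiction from a repeated incumbent, whereas you count pairwise distinct incumbents; this also gives the explicit bound of at most $|\mathcal X|$ nonterminating iterations.
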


\begin{proof}
Since $\mathcal X$ is compact and discrete, it is finite. Suppose Algorithm~\ref{BiCSalgorithm} does not terminate. Then there exist iterations $k<\ell$ such that $\bm x^k=\bm x^\ell=:\bar{\bm x}$. Since iteration $k$ is nonterminating, for some $m\in[M]$, the oracle returns a distribution $\mathbb P_m^k$ satisfying
\[
\mathbb E_{\mathbb P_m^k}[g_m(\bar{\bm x},\bm a_m)]>b_m+\epsilon/2.
\]
Once its support is added, $\mathbb P_m^k$ remains feasible for all subsequent restricted problems. Hence, at iteration $\ell$, the left-hand side of the corresponding MP-DRO constraint at $\bar{\bm x}$ is greater than $b_m$, contradicting the feasibility of $\bm x^\ell$. Therefore, the algorithm terminates finitely, and $\epsilon$-optimality follows from Theorem~\ref{convergence-BiCS}.
\end{proof}

\subsubsection{Cutting-set Mechanism}

In RO, the adversarial response is a scenario, so row generation naturally transfers a violated scenario to the master. In DRO, Luo and Mehrotra \cite{luo2019decomposition} use scenario-indexed separation after a semi-infinite reformulation, while Yang and Wu \cite{yang2018distributionally} solve an equivalent semidefinite reformulation by delayed constraint generation. SP-DRO instead returns a finite-support distribution, so BiCS transfers its entire support in a single update. The comparison formalized below concerns the granularity of support generation: a single-scenario update versus a complete support set. Theorem~\ref{thm:discretize_unbounded} motivates finite-support approximation at the expected value level. Under the conditions of Proposition~\ref{prop:SP-O1}, a finite-support oracle returns an optimal distribution; appending its entire support in a single update recovers the full worst-case value at the current master solution. We refer to this as a \emph{cutting-set} update, and Proposition~\ref{prop:multi-vs-single} below quantifies the resulting gain over the single-scenario approach. We state the comparison under exact subproblem optimality to keep the structural distinction transparent; in the $\epsilon$-optimal setting of Theorem~\ref{convergence-BiCS}, a finite-support return instead provides a certified lower bound.

\begin{proposition}\label{prop:multi-vs-single}
Fix an iteration $k$ with master solution $\bm x^k$. Suppose SP-DRO~\eqref{SubProblem} is solved exactly at $\bm x^k$ for every row $m\in[M]$ by an oracle returning a finite-support optimal distribution. For each $m\in[M]$, let $\hat{A}_m^M$ and $\hat{A}_m^S$ denote the scenario sets obtained from the same current set $\hat{A}_m$ by a cutting-set update and a single-scenario update, respectively, and let $\phi_m(\bm x, \hat{A}) := \sup\{\mathbb{E}_{\mathbb P_m}[f_m(\bm{a}_m, \bm x)] : \mathbb P_m \in \mathcal{P}_m,\ \operatorname{supp}(\mathbb P_m) \subseteq \hat{A}\}$. Then
\begin{itemize}
    \item[(i)] for any fixed $m\in[M]$, $\phi_m(\bm x^k, \hat{A}_m^S) \;\le\; \Omega_m^*(\bm x^k) \;=\; \phi_m(\bm x^k, \hat{A}_m^M)$;
    \item[(ii)] $\mathcal{F}^* \subseteq \mathcal{F}^M$, where $\mathcal{F}^M := \{\bm x \in \mathcal{X} : \phi_m(\bm x, \hat{A}_m^M) \le b_m,\ \forall m \in [M]\}$ and $\mathcal{F}^*$ is the feasible region of \eqref{eq_DRO}.
\end{itemize}
\end{proposition}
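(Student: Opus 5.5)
The proof rests on one elementary fact: $\phi_m(\bm x,\hat A)$ is monotone nondecreasing in $\hat A$ under set inclusion. Indeed, if $\hat A\subseteq\hat A'$, any $\mathbb P_m\in\mathcal P_m$ with $\operatorname{supp}(\mathbb P_m)\subseteq\hat A$ also has support in $\hat A'$. Hence the feasible set of the supremum defining $\phi_m(\bm x,\hat A)$ is contained in that defining $\phi_m(\bm x,\hat A')$. Moreover, every such distribution lies in $\mathcal P_m$. So for any finite $\hat A\subseteq\mathcal A_m$ we have $\phi_m(\bm x,\hat A)\le\sup_{\mathbb P_m\in\mathcal P_m}\mathbb E_{\mathbb P_m}[f_m(\bm a_m,\bm x)]=\Omega_m^*(\bm x)$. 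All expectations are well defined by Assumption~\ref{assumption:regularity}(ii).

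For part (i), I would fix $m$ and write $\hat A_m^S=\hat A_m\cup\{\bm a\}$, a finite subset of $\mathcal A_m$. The upper bound above then gives $\phi_m(\bm x^k,\hat A_m^S)\le\Omega_m^*(\bm x^k)$. For the equality, note that $\hat A_m^M=\hat A_m\cup\tilde A_m^*$, where $\tilde A_m^*=\operatorname{supp}(\tilde{\mathbb P}_m^*)$ and $\tilde{\mathbb P}_m^*\in\mathcal P_m$ is the exact optimal finite-support distribution returned at $\bm x^k$. So $\tilde{\mathbb P}_m^*$ is feasible in the supremum defining $\phi_m(\bm x^k,\hat A_m^M)$, which gives
\[
\phi_m(\bm x^k,\hat A_m^M)\;\ge\;\mathbb E_{\tilde{\mathbb P}_m^*}[f_m(\bm a_m,\bm x^k)]\;=\;\Omega_m^*(\bm x^k).
\]
Combined with the upper bound, this yields equality.

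For part (ii), take $\bm x\in\mathcal F^*$, so $\Omega_m^*(\bm x)\le b_m$ for all $m$. Since $\hat A_m^M$ is a finite subset of $\mathcal A_m$, the upper bound gives $\phi_m(\bm x,\hat A_m^M)\le\Omega_m^*(\bm x)\le b_m$, hence $\bm x\in\mathcal F^M$. Note that this holds at every $\bm x$, not only at $\bm x^k$: the bound does not use optimality of the returned distribution.

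I do not expect a real obstacle. The only points needing care are, first, that the supremum over a possibly empty restricted set is read as $-\infty$, which is harmless for both inequalities. Second, the argument should rely on the returned distribution being genuinely feasible in $\mathcal P_m$ and attaining the exact optimum; this is precisely the exactness hypothesis of the proposition.
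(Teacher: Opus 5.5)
Your proof is correct and follows essentially the same route as the paper. The lower bound in (i) comes from feasibility of the exact returned distribution $\tilde{\mathbb P}_m^*$ on $\hat A_m^M=\hat A_m\cup\tilde A_m^*$, and every upper bound, including all of (ii), comes from the inclusion $\mathcal P_m(\hat A)\subseteq\mathcal P_m$; the monotonicity of $\phi_m$ in $\hat A$ that you open with is true but is not needed beyond that inclusion.
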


\begin{proof}
For part~(i), fix any $m\in[M]$ and let $\hat{A}_m$ denote its scenario set before the update.

Let $\tilde{\mathbb P}_m^*$ be the finite-support optimal distribution returned by the oracle and set $\tilde A_m^*:=\operatorname{supp}(\tilde{\mathbb P}_m^*)$. The cutting-set update appends this entire support, so $\hat{A}_m^M = \hat{A}_m \cup \tilde A_m^*$ and $\tilde{\mathbb P}_m^*$ is feasible for the maximization defining $\phi_m(\bm x^k, \hat{A}_m^M)$, giving
\[
\phi_m(\bm x^k, \hat{A}_m^M) \;\ge\; \mathbb{E}_{\tilde{\mathbb P}_m^*}[f_m(\bm{a}_m, \bm x^k)] \;=\; \Omega_m^*(\bm x^k).
\]
The reverse inequality $\phi_m(\bm x^k, \hat{A}_m^M) \le \Omega_m^*(\bm x^k)$ is immediate from $\mathcal{P}_m(\hat{A}_m^M) \subseteq \mathcal{P}_m$, hence the equality. The single-scenario update appends one scenario $\bm{a}_{md}\in\tilde A_m^*$, yielding $\hat{A}_m^S = \hat{A}_m \cup \{\bm{a}_{md}\} \subseteq \mathcal{A}_m$, and the same set inclusion gives $\phi_m(\bm x^k, \hat{A}_m^S) \le \Omega_m^*(\bm x^k)$, which establishes (i).

For (ii), take any $\bm x \in \mathcal{F}^*$, so that $\Omega_m^*(\bm x) \le b_m$ for all $m \in [M]$. Since $\mathcal{P}_m(\hat{A}_m^M) \subseteq \mathcal{P}_m$, we have $\phi_m(\bm x, \hat{A}_m^M) \le \Omega_m^*(\bm x) \le b_m$ for every $m$, and hence $\bm x \in \mathcal{F}^M$.
\end{proof}

Part~(i) of Proposition~\ref{prop:multi-vs-single} shows that the cutting-set update reproduces the worst-case value $\Omega_m^*(\bm x^k)$ exactly after a single subproblem solve, whereas the single-scenario update only yields a value bounded above by $\Omega_m^*(\bm x^k)$ and may require subsequent iterations to close this gap. The traditional single-scenario method is therefore recovered as a degenerate special case of the cutting-set framework, in which the appended support is artificially restricted to a singleton at each iteration. Part~(ii) confirms that the cutting-set master remains a valid relaxation, since $\mathcal{F}^* \subseteq \mathcal{F}^M$.

A natural concern is that the master problem may grow more rapidly under cutting-set updates than under single-scenario ones. However, the scenarios added in each cutting set jointly represent the support of one adversarial distribution, rather than unrelated individual cuts, and therefore provide a more complete representation of the identified distributional violation. Theoretical bounds and practical strategies for controlling this growth are discussed later in Section~\ref{BiCS SP} and ~\ref{sec:Enhancement}.

\subsubsection{Bilevel Structure of Distribution Cuts}

The cutting-set mechanism specifies how many scenarios enter a cut, but it leaves open how probability mass is assigned to those scenarios. Algorithm~\ref{BiCSalgorithm} reflects this by treating the probability measure as a decision variable in both MP-DRO~\eqref{MasterProblem} and SP-DRO~\eqref{SubProblem}, constrained in each case to lie in $\mathcal{P}_m$. Proposition~\ref{prop:SP-O1} provides an exact admissible finite-support optimizer under its conditions, while Corollary~\ref{cor:SP-O1-oracle} provides an $\epsilon/2$-accurate admissible finite-support solution under its stated conditions. In both cases, membership in $\mathcal{P}_m$ ensures that the resulting distribution cut is valid for $\Omega_m^*(\bm x)$, as required by Theorem~\ref{convergence-BiCS}.

The master problem raises a related but distinct question: how should support information from successive oracle calls participate in future distributions? There are three natural representations. A \emph{fixed distribution cut}, as in the sampling-based cutting surface method of Mehrotra and Papp \cite{mehrotra2014cutting}, uses each generated distribution as a cut and thereby freezes its support and probability weights within that cut. A related fixed weight cutting plane construction is used by Xu et al.\ \cite{xu2018distributionally} on a discretized ambiguity set. A naive parametric construction retains each generated support as an individual block and reoptimizes weights only within that block; we call this a \emph{blockwise parametric distribution cut}. BiCS instead uses a \emph{pooled parametric distribution cut}: it combines all retained support and reoptimizes the weights jointly over the resulting set. Pooling therefore permits admissible distributions that combine scenarios generated in different iterations. The next proposition compares the three representations for a common support history.

\begin{proposition}[Ordering of distribution cuts]
\label{prop:BiCS-vs-fixed}
Fix a constraint $m\in[M]$, a positive integer $R$, and a common collection of finite support blocks $A_m^r\subseteq\mathcal A_m$, $r\in[R]$, with $\bar{\mathbb P}_m^r\in\mathcal P_m(A_m^r)$. Let $\hat A_m:=\bigcup_{r=1}^R A_m^r$ and, using the restricted expected value $\phi_m$ defined in Proposition~\ref{prop:multi-vs-single}, define
\[
\phi_m^{\mathrm{fix}}(\bm x):=\max_{r\in[R]}\mathbb E_{\bar{\mathbb P}_m^r}[f_m(\bm a_m,\bm x)],\qquad
\phi_m^{\mathrm{blk}}(\bm x):=\max_{r\in[R]}\phi_m(\bm x,A_m^r),\qquad
\phi_m^{\mathrm{pool}}(\bm x):=\phi_m(\bm x,\hat A_m).
\]
Then, for every $\bm x\in\mathcal X$, $\phi_m^{\mathrm{fix}}(\bm x)\le\phi_m^{\mathrm{blk}}(\bm x) \le\phi_m^{\mathrm{pool}}(\bm x)\le\Omega_m^*(\bm x).$
For $q\in\{\mathrm{fix},\mathrm{blk},\mathrm{pool}\}$, let $\mathcal X_m^q:=\{\bm x\in\mathcal X:\phi_m^q(\bm x)\le b_m\}$ and let $\mathcal X_m^*:=\{\bm x\in\mathcal X:\Omega_m^*(\bm x)\le b_m\}$. Then
\[
\mathcal X_m^*\subseteq\mathcal X_m^{\mathrm{pool}}
\subseteq\mathcal X_m^{\mathrm{blk}}\subseteq\mathcal X_m^{\mathrm{fix}}.
\]
\end{proposition}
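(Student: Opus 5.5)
The plan is to establish the chain of value inequalities pointwise in $\bm x$ and then read off the reverse chain of feasible-set inclusions from the monotonicity of sublevel sets. The only tool needed is the elementary fact that a supremum over a larger feasible set is at least the supremum over a smaller one, applied to the nested families
\[
\{\bar{\mathbb P}_m^r\}\subseteq\mathcal P_m(A_m^r)\subseteq\mathcal P_m(\hat A_m)\subseteq\mathcal P_m .
\]
No earlier theorem beyond the definition of $\phi_m$ in Proposition~\ref{prop:multi-vs-single} is required.

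Fix $\bm x\in\mathcal X$ and proceed in three steps.

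First, for each $r\in[R]$, the distribution $\bar{\mathbb P}_m^r$ lies in $\mathcal P_m(A_m^r)$ by hypothesis. It is therefore feasible in the supremum defining $\phi_m(\bm x,A_m^r)$, so $\mathbb E_{\bar{\mathbb P}_m^r}[f_m(\bm a_m,\bm x)]\le\phi_m(\bm x,A_m^r)$. Taking the maximum over $r$ gives $\phi_m^{\mathrm{fix}}(\bm x)\le\phi_m^{\mathrm{blk}}(\bm x)$.

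Second, since $A_m^r\subseteq\hat A_m$, any $\mathbb P_m\in\mathcal P_m$ with support in $A_m^r$ also has support in $\hat A_m$. Hence $\mathcal P_m(A_m^r)\subseteq\mathcal P_m(\hat A_m)$ and $\phi_m(\bm x,A_m^r)\le\phi_m(\bm x,\hat A_m)$ for every $r$. Maximizing over $r$ yields $\phi_m^{\mathrm{blk}}(\bm x)\le\phi_m^{\mathrm{pool}}(\bm x)$.

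Third, $\mathcal P_m(\hat A_m)\subseteq\mathcal P_m$ directly gives $\phi_m^{\mathrm{pool}}(\bm x)\le\Omega_m^*(\bm x)$, which is exactly the relaxation argument already used in Part~(i) of Theorem~\ref{convergence-BiCS} and in Proposition~\ref{prop:multi-vs-single}(ii).

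The one point needing care is that all these quantities are well defined, i.e.\ that the expectations are finite. Every distribution involved belongs to $\mathcal P_m$, so Assumption~\ref{assumption:regularity}(ii) guarantees $\mathbb E_{\mathbb P_m}[|f_m(\bm a_m,\bm x)|]<\infty$. Each restricted family is nonempty because it contains the corresponding $\bar{\mathbb P}_m^r$, so no supremum is taken over an empty set. The suprema may be $+\infty$ only in the case of $\Omega_m^*$, and the inequalities remain valid in the extended reals. Because $\hat A_m$ is finite, the suprema over $\mathcal P_m(A_m^r)$ and $\mathcal P_m(\hat A_m)$ are finite-dimensional problems in the weights, although attainment is not needed for the argument.

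For the set inclusions, take $\bm x\in\mathcal X_m^*$, so that $\Omega_m^*(\bm x)\le b_m$. The value chain then gives $\phi_m^{\mathrm{pool}}(\bm x)\le b_m$, i.e.\ $\bm x\in\mathcal X_m^{\mathrm{pool}}$. Applying the same reasoning along each link, $\phi^{\mathrm{blk}}\le\phi^{\mathrm{pool}}$ gives $\mathcal X_m^{\mathrm{pool}}\subseteq\mathcal X_m^{\mathrm{blk}}$, and $\phi^{\mathrm{fix}}\le\phi^{\mathrm{blk}}$ gives $\mathcal X_m^{\mathrm{blk}}\subseteq\mathcal X_m^{\mathrm{fix}}$. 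There is no real obstacle in this proof; the only step worth flagging is the support-inclusion monotonicity $\mathcal P_m(A_m^r)\subseteq\mathcal P_m(\hat A_m)$, which holds because the moment constraints of $\mathcal P_m$ do not depend on the restricting support set.
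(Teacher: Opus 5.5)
Your proposal is correct and matches the paper's proof: both use the nested inclusions $\{\bar{\mathbb P}_m^r\}\subseteq\mathcal P_m(A_m^r)\subseteq\mathcal P_m(\hat A_m)\subseteq\mathcal P_m$, take the maximum over $r$ to obtain the value chain, and then read off the feasible-region inclusions from the monotonicity of sublevel sets. Your remarks on well-definedness (each restricted family is nonempty and each expectation is finite) are a harmless refinement that the paper leaves implicit.
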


\begin{proof}
For every $r\in[R]$, $\{\bar{\mathbb P}_m^r\}\subseteq\mathcal P_m(A_m^r)
\subseteq\mathcal P_m(\hat A_m)\subseteq\mathcal P_m$.
Therefore, for every $\bm x\in\mathcal X$,
\[
\mathbb E_{\bar{\mathbb P}_m^r}[f_m(\bm a_m,\bm x)]
\le\phi_m(\bm x,A_m^r)
\le\phi_m(\bm x,\hat A_m)
\le\Omega_m^*(\bm x).
\]
Taking the maximum over $r\in[R]$ gives the expected value inequalities. The feasible region inclusions follow immediately.
\end{proof}

\begin{example}[Strict feasible-region inclusions]
\label{ex:strict-fixed-cut}
Let $\mathcal X=\mathbb R_+$, $\mathcal A_m=\{0,1,2\}$,
$f_m(a,x)=a^2x$, $b_m=1$, and
\[
\mathcal P_m
=
\left\{
\mathbb P\text{ on }\mathcal A_m:
\mathbb E_{\mathbb P}[a]\le\tfrac32
\right\}.
\]
Take $R=2$, $A_m^1=\{0,1\}$, $A_m^2=\{1,2\}$, and
\[
\bar{\mathbb P}_m^1=\tfrac12\delta_0+\tfrac12\delta_1,
\qquad
\bar{\mathbb P}_m^2=\tfrac23\delta_1+\tfrac13\delta_2.
\]
The fixed cuts give $\phi_m^{\mathrm{fix}}(x)=2x$. Over
$A_m^1$ and $A_m^2$, the largest values of
$\mathbb E_{\mathbb P}[a^2]$ are $1$ and $\tfrac52$,
respectively, so $\phi_m^{\mathrm{blk}}(x)=\tfrac52x$.
After pooling, $a^2\le2a$ on $\mathcal A_m$, giving
$\mathbb E_{\mathbb P}[a^2]\le3$, with equality at
$\mathbb P=\tfrac14\delta_0+\tfrac34\delta_2$. Hence
$\phi_m^{\mathrm{pool}}(x)=3x$ and
\[
\mathcal X_m^{\mathrm{pool}}=[0,\tfrac13]
\subsetneq
\mathcal X_m^{\mathrm{blk}}=[0,\tfrac25]
\subsetneq
\mathcal X_m^{\mathrm{fix}}=[0,\tfrac12].
\]
Thus, both inclusions among the three representations in
Proposition~\ref{prop:BiCS-vs-fixed} can be strict simultaneously.
\end{example}

\begin{remark}[Decision-Dependent Uncertainty]
\label{rem:DDU}
The support--weight mechanism also accommodates decision-dependent ambiguity sets $\mathcal{P}_m(\bm x)$. Because the master problem re-optimizes the probability weights at each candidate decision, decision dependence can be incorporated directly through $\mathcal{P}_m(\bm x)$. When the sample space $\mathcal{A}_m$ is decision-independent, support points generated in past iterations remain valid as $\bm x$ varies.
\end{remark}

Together, Propositions~\ref{prop:multi-vs-single} and~\ref{prop:BiCS-vs-fixed} show that BiCS retains distributional information at two levels: the cutting set update transfers the complete support returned by the oracle, and the pooled parametric distribution cut reoptimizes admissible weights jointly over all retained support points. The three representations form a progression: a fixed cut freezes its support and weights together, a blockwise cut reoptimizes weights within each support block, and a pooled cut reoptimizes weights jointly over the union of all retained blocks. Implementing this representation requires the inner maximization in the master problem and the oracle subproblem, which are developed next.

\subsection{Anatomy of BiCS Framework}

This subsection details how each component of the BiCS framework is solved in practice. Section~\ref{BiCS Master Problem} presents two reformulation strategies for eliminating the inner maximization in the master problem, while Section~\ref{BiCS SP} develops two oracle procedures for the subproblem: a finite mathematical programming formulation and a column generation scheme. Finally, Section~\ref{sec:Enhancement} further introduces computational enhancements to improve efficiency. An overview of the framework is provided in Figure~\ref{fig:BiCS framework}. 

\begin{figure}[h!]
    \centering
    \includegraphics[width=0.9\textwidth]{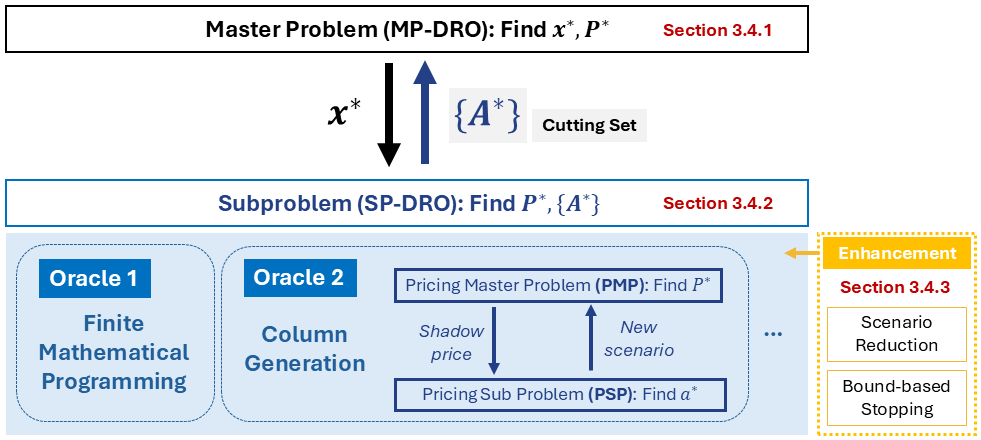}
    \caption{Anatomy of BiCS framework.}
    \label{fig:BiCS framework}
\end{figure}

\subsubsection{Solving Master Problem} \label{BiCS Master Problem}

The master problem \eqref{MasterProblem} contains an embedded worst-case expected value defined by maximization over the probability measure. By Step~1 of Algorithm~\ref{BiCSalgorithm}, $\mathcal P_m(\hat A_m)$ is nonempty throughout the algorithm; for fixed $\bm x$, the constraints in Definition~\ref{ambiguity_set_define} make this inner maximization a feasible and bounded linear program. This expected value can be represented in a single-level formulation via standard optimality conditions, such as LP duality or KKT-based reformulations. Detailed formulations are provided in Appendix~\ref{appendix:master}.

\subsubsection{Solving Subproblem} \label{BiCS SP}

While the master problem reduces cleanly to a single-level model, the subproblem~\eqref{SubProblem} is where the main computational challenge lies. For a fixed decision $\bm{x}^*$, the subproblem must identify a \emph{worst-case distribution} over a space of probability measures that is, in general, infinite-dimensional. Theorem~\ref{thm:discretize_unbounded} provides only expected-value-level approximation by finite-support measures; admissible exact and approximate finite-support distributions are supplied by Proposition~\ref{prop:SP-O1} and Corollary~\ref{cor:SP-O1-oracle}, respectively. The remaining construction and computational question is \emph{how} to identify such a finite support, and do so efficiently. We address this question from two complementary angles through the two oracles developed in the sequel.

We note that the finite mathematical programming oracle and the PMP--PSP oracle based on the column generation technique have been employed by Lu and Zeng \cite{lu2024two} to solve two-stage DRO problems with recourse, in their basic versions for a compact sample space. Here, we investigate and develop them in the context of single-stage DRO. In particular, we present analyses for closed but possibly unbounded sample spaces, which require substantial new arguments for finite-support representation, oracle construction, and convergence.

\begin{assumption}[Light-tail domination]
\label{assumption:light-tail}
For each $m \in [M]$, each moment function $g_t : \mathcal{A}_m \to \mathbb{R}$ is continuous, $t \in [T_m]$. There exists an index $t_0 \in [T_m]$ such that $g_{t_0}$ is nonnegative and coercive on $\mathcal{A}_m$, and $g_{t_0}$ strictly dominates the tail behavior of both $f_m$ and the remaining moment functions: for every $\bm{x} \in \mathcal{X}$,
\[
    \lim_{g_{t_0}(\bm{a}_m) \to \infty} 
    \frac{|f_m(\bm{a}_m, \bm{x})|}{g_{t_0}(\bm{a}_m)} = 0,
    \qquad 
    \lim_{g_{t_0}(\bm{a}_m) \to \infty} 
    \frac{|g_t(\bm{a}_m)|}{g_{t_0}(\bm{a}_m)} = 0 
    \;\; \forall t \in [T_m] \setminus \{t_0\}.
\]
\end{assumption}

Conditions of the form in Assumption~\ref{assumption:light-tail} are standard tools for controlling tails in DRO over unbounded sample spaces \cite{xu2018distributionally,blanchet2019quantifying,yue2022linear}. Our contribution is to show that, within the primal BiCS framework for generalized-moment single-stage DRO, they yield an exact finite-support formulation for Oracle~1 and, together with the additional conditions of Theorem~\ref{ConvergeProofSPCG}, finite termination of Oracle~2 without requiring the sample space to be compact.

\paragraph{Oracle 1 (Finite Mathematical Programming)}

The key idea behind Oracle~1 is conceptually straightforward. Recall that in RO, the subproblem seeks a single worst-case scenario $\bm{a}_m \in \mathcal{A}_m$. In DRO, the subproblem instead seeks an entire worst-case \emph{distribution} over $\mathcal{A}_m$, which is an inherently infinite-dimensional object. However, a classical result \cite{shapiro2001duality} shows that whenever the supremum in \eqref{SubProblem} is attained, there exists a worst-case distribution supported on at most $T_m+1$ scenarios. This means we only need to find $T_m+1$ ``representative'' scenarios and allocate probability mass among them. Oracle~1 exploits this reduction directly: it pre-allocates $T_m+1$ slots for candidate scenarios, and simultaneously optimizes over both their locations $\tilde{\bm{a}}_i \in \mathcal{A}_m$ and their probability weights $\tilde{P}_i$, treating all of them as decision variables of a single finite-dimensional mathematical program.

\begin{proposition} \label{prop:SP-O1}
Suppose Assumptions~\ref{assumption:regularity} and~\ref{assumption:light-tail} 
hold. Then for each constraint $m \in [M]$ 
and any fixed $\bm{x}^* \in \mathcal{X}$, the worst-case expectation 
$\Omega_m^*(\bm{x}^*)$ defined in \eqref{SubProblem} coincides with the 
optimal value $\Omega_m^{\mathrm{O1}}(\bm{x}^*)$ of the following finite 
mathematical program:
\begin{subequations} \label{SP-bilinear}
\begin{align}
    \textbf{\emph{SP-O1:}} \quad 
    \Omega_m^{\mathrm{O1}}(\bm{x}^*) \;=\; \max \quad 
    & \sum_{i=1}^{T_m+1} \tilde{P}_i \, f_m(\tilde{\bm{a}}_i, \bm{x}^*) \\
    \mathrm{s.t.} \quad 
    & \sum_{i=1}^{T_m+1} \tilde{P}_i = 1, \quad
      \sum_{i=1}^{T_m+1} \tilde{P}_i \, g_t(\tilde{\bm{a}}_i) \leq \gamma_{mt}, 
      \quad \forall t \in [T_m], \label{SP-bilinear-start} \\
    & \tilde{\bm{a}}_i \in \mathcal{A}_m, \quad \tilde{P}_i \geq 0,
      \quad \forall i \in [T_m+1]. \label{SP-bilinear-end}
\end{align}
\end{subequations}
\end{proposition}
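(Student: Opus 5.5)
Two inequalities are needed. The easy one is $\Omega_m^{\mathrm{O1}}(\bm x^*)\le\Omega_m^*(\bm x^*)$. Any feasible point $(\tilde P_i,\tilde{\bm a}_i)_{i\in[T_m+1]}$ of SP-O1 defines a finitely supported probability measure $\tilde{\mathbb P}=\sum_i\tilde P_i\delta_{\tilde{\bm a}_i}$ on $\mathcal A_m$. This measure satisfies the moment inequalities, and every expectation under it is a finite sum, so it lies in $\mathcal P_m$. Its expected value of $f_m(\cdot,\bm x^*)$ equals the SP-O1 objective, which gives the bound.

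For the reverse inequality, I will show that the supremum in \eqref{SubProblem} is attained and that an optimizer can be taken with at most $T_m+1$ atoms. Alternatively, I will show directly that the supremum over finite-support measures with $T_m+1$ atoms equals $\Omega_m^*$, since SP-O1 is written as a $\max$ and must be attained as well.

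The attainment step uses Assumptions~\ref{assumption:regularity} and~\ref{assumption:light-tail}.
\begin{enumerate}
\item Tightness. Take a maximizing sequence $\mathbb P^{(j)}\in\mathcal P_m$. Because $g_{t_0}\ge0$, we have $\mathbb E_{\mathbb P^{(j)}}[g_{t_0}]\le\gamma_{mt_0}$. Since $g_{t_0}$ is coercive and continuous on the closed set $\mathcal A_m$, its sublevel sets are compact. Markov's inequality then gives tightness, and Prokhorov's theorem yields a weakly convergent subsequence $\mathbb P^{(j)}\Rightarrow\bar{\mathbb P}$, supported on $\mathcal A_m$ because $\mathcal A_m$ is closed.
\item Moment feasibility of the limit. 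For $t_0$, lower semicontinuity and nonnegativity give $\mathbb E_{\bar{\mathbb P}}[g_{t_0}]\le\liminf_j\mathbb E_{\mathbb P^{(j)}}[g_{t_0}]\le\gamma_{mt_0}$. For $t\ne t_0$, the domination $|g_t|=o(g_{t_0})$ together with the uniform bound on $\mathbb E[g_{t_0}]$ makes $g_t$ uniformly integrable along the sequence. Since $g_t$ is continuous, $\mathbb E_{\mathbb P^{(j)}}[g_t]\to\mathbb E_{\bar{\mathbb P}}[g_t]$, so $\bar{\mathbb P}\in\mathcal P_m$.
\item Upper semicontinuity of the objective. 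The same uniform-integrability argument applies to $f_m(\cdot,\bm x^*)$, which is upper semicontinuous and $o(g_{t_0})$. Truncating as $\max\{f_m,-L\}$ and letting $L\to\infty$, the portmanteau theorem for u.s.c.\ functions bounded above on compacts, with tails controlled by $\sup_j\mathbb E[g_{t_0}]$, gives $\limsup_j\mathbb E_{\mathbb P^{(j)}}[f_m]\le\mathbb E_{\bar{\mathbb P}}[f_m]$. Hence $\bar{\mathbb P}$ is a maximizer and $\Omega_m^*(\bm x^*)$ is finite and attained.
\end{enumerate}

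I expect this step to be the main obstacle. The tail estimate must be done carefully: given $\eta>0$, choose $R$ so that $|f_m|\le\eta\, g_{t_0}$ and $|g_t|\le\eta\, g_{t_0}$ on $\{g_{t_0}>R\}$. The tail contributions are then at most $\eta\gamma_{mt_0}$ uniformly in $j$, and the compact part $\{g_{t_0}\le R\}$ is handled by weak convergence.

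To reduce the support, I will apply the classical Richter/Rogosinski-type argument cited from \cite{shapiro2001duality}. Fix the optimal $\bar{\mathbb P}$ and consider the vector $v=(\mathbb E_{\bar{\mathbb P}}[f_m],\mathbb E_{\bar{\mathbb P}}[g_1],\dots,\mathbb E_{\bar{\mathbb P}}[g_{T_m}])\in\mathbb R^{T_m+1}$. It lies in the convex hull of the image curve $\{(f_m(\bm a,\bm x^*),g(\bm a)):\bm a\in\mathcal A_m\}$, with integrability supplied by Assumption~\ref{assumption:regularity}(ii). Richter's theorem therefore gives a measure with at most $T_m+2$ atoms matching all these moments. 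Optimality of $\bar{\mathbb P}$ then shows that $v$ lies on the boundary of the convex hull in the $f$-direction. A supporting hyperplane drops the count to $T_m+1$ atoms. Alternatively, I can take the restricted LP over the $T_m+2$ atoms, whose basic optimal solution has at most $T_m+1$ positive weights because there are $T_m+1$ equality/inequality rows. The resulting measure is feasible for SP-O1 and attains $\Omega_m^*(\bm x^*)$, which proves both the equality and attainment of the $\max$ in SP-O1.
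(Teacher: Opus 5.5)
Your proposal is correct and follows essentially the same route as the paper. The induced finite-support measure gives $\Omega_m^{\mathrm{O1}}\le\Omega_m^*$. Attainment follows from Markov tightness via the coercive $g_{t_0}$, Prokhorov, and uniform-integrability arguments, both for the limiting moment constraints and for upper semicontinuity of the objective. The support is then reduced by Richter--Rogosinski followed by a basic optimal solution of the finite weights LP. Extracting a limit from a maximizing sequence, instead of proving weak compactness of $\mathcal P_m$ as the paper does, is an immaterial variation.

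One small correction: the truncation that makes the usc portmanteau inequality applicable is the upper one, $\min\{f_m,L\}$, removed afterwards using uniform integrability of $f_m^+$, as in the paper. Truncating below by $\max\{f_m,-L\}$ is harmless, but it does not produce a function bounded above.
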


\begin{proof}
We prove both inequalities. Any feasible solution 
$\{(\tilde{\bm{a}}_i,\tilde P_i)\}_{i=1}^{T_m+1}$ of SP-O1 induces the 
probability measure $\mathbb Q:=\sum_{i=1}^{T_m+1}\tilde P_i
\delta_{\tilde{\bm a}_i}\in\mathcal P_m$. Its SP-O1 objective equals 
$\mathbb E_{\mathbb Q}[f_m(\bm a_m,\bm x^*)]$, and hence
$\Omega_m^{\mathrm{O1}}(\bm x^*)\le\Omega_m^*(\bm x^*)$.

For the reverse inequality, we first establish attainment. For $R>0$, let
$C_R:=\{\bm a\in\mathcal A_m:g_{t_0}(\bm a)\le R\}$. This set is compact 
because $\mathcal A_m$ is closed and $g_{t_0}$ is continuous and coercive. 
For every $\mathbb P\in\mathcal P_m$, Markov's inequality gives
\[
    \mathbb P(C_R^c)\le
    \frac{\mathbb E_{\mathbb P}[g_{t_0}]}{R}
    \le\frac{\gamma_{mt_0}}{R},
\]
so $\mathcal P_m$ is tight. To prove weak closedness, let 
$\mathbb P_n\rightharpoonup\mathbb P$ with $\mathbb P_n\in\mathcal P_m$. 
Since $\mathcal A_m$ is closed, the Portmanteau theorem implies 
$\mathbb P(\mathcal A_m)=1$. Moreover, because $g_{t_0}$ is nonnegative and 
lower semicontinuous, $\mathbb E_{\mathbb P}[g_{t_0}]\le
\liminf_n\mathbb E_{\mathbb P_n}[g_{t_0}]\le\gamma_{mt_0}$.
For $t\ne t_0$ and any $\eta>0$, Assumption~\ref{assumption:light-tail} 
gives $R_\eta>0$ such that $|g_t|\le\eta g_{t_0}$ on 
$\{g_{t_0}>R_\eta\}$. Continuity of $g_t$ gives 
$M_\eta:=\sup_{C_{R_\eta}}|g_t|<\infty$. Thus, for $K>M_\eta$,
\[
    \sup_{\mathbb Q\in\mathcal P_m}
    \mathbb E_{\mathbb Q}[g_t^-\mathbf 1_{\{g_t^->K\}}]
    \le\eta\gamma_{mt_0},
\]
Since $\eta>0$ is arbitrary, letting $\eta\downarrow0$ proves uniform 
integrability of $\{g_t^-:\mathbb Q\in\mathcal P_m\}$. 
The same tail bound gives $\mathbb E_{\mathbb P}[g_t^-]<\infty$. For 
$g_{t,L}:=\max\{g_t,-L\}$, Portmanteau and uniform integrability yield
\[
    \mathbb E_{\mathbb P}[g_{t,L}]
    \le\liminf_n\mathbb E_{\mathbb P_n}[g_{t,L}]
    \le\gamma_{mt}+\sup_n
    \mathbb E_{\mathbb P_n}[g_t^-\mathbf 1_{\{g_t^->L\}}].
\]
Letting $L\to\infty$ gives 
$\mathbb E_{\mathbb P}[g_t]\le\gamma_{mt}$. Hence 
$\mathcal P_m$ is weakly closed and, by Prokhorov's theorem, weakly compact.

Assumption~\ref{assumption:regularity}(i) makes 
$f_m(\cdot,\bm x^*)$ upper semicontinuous. The same tail argument, now using 
its boundedness above on each $C_R$, shows that 
$\{f_m^+(\cdot,\bm x^*):\mathbb Q\in\mathcal P_m\}$ is uniformly 
integrable. Applying Portmanteau to the bounded-above upper semicontinuous 
truncations $f_{m,L}:=\min\{f_m,L\}$ and removing the truncation using uniform 
integrability and Assumption~\ref{assumption:regularity}(ii) gives
\[
    \limsup_n\mathbb E_{\mathbb P_n}[f_m(\bm a_m,\bm x^*)]
    \le\mathbb E_{\mathbb P}[f_m(\bm a_m,\bm x^*)].
\]
Thus the objective is weakly upper semicontinuous on the nonempty weakly 
compact set $\mathcal P_m$, and its supremum is attained by some 
$\mathbb P^\star\in\mathcal P_m$.

All these expectations are finite by Definition~\ref{ambiguity_set_define} 
and Assumption~\ref{assumption:regularity}(ii). Apply Richter--Rogosinski 
\cite[Lemma~3.1]{shapiro2001duality} to $1,g_1,\ldots,g_{T_m}$, and 
$f_m(\cdot,\bm x^*)$ under $\mathbb P^\star$. 
It yields atoms $\{\bm a_j\}_{j=1}^r\subseteq\mathcal A_m$, 
$r\le T_m+2$, and weights $q_j\ge0$ matching these expectations. Let 
$v_t:=\mathbb E_{\mathbb P^\star}[g_t]$ and consider the finite weights LP
\[
    \max_{p_j\ge0}\left\{\sum_{j=1}^r p_j f_m(\bm a_j,\bm x^*):
    \sum_{j=1}^r p_j=1,\ \sum_{j=1}^r p_j g_t(\bm a_j)=v_t,\ \forall t\in[T_m]\right\}.
\]
The weights $q_j$ are feasible, so this LP has a basic optimal solution with 
at most $T_m+1$ positive weights because it has $T_m+1$ equality constraints. 
It preserves $v_t\le\gamma_{mt}$ and therefore defines a measure in 
$\mathcal P_m$. Its objective is at least that of $q$, namely 
$\Omega_m^*(\bm x^*)$, and cannot exceed this supremum. Zero-padding gives 
a feasible SP-O1 solution attaining $\Omega_m^*(\bm x^*)$, proving the 
reverse inequality and the result.
\end{proof}

Proposition~\ref{prop:SP-O1} establishes that Oracle~1 exactly recovers the worst-case expectation when the supremum is attained. In some cases, however, the supremum may only be approached but not achieved (e.g., when the sample space is unbounded and lacks a coercive moment constraint). The following corollary shows that even in such cases, Oracle~1 can still produce a solution that is arbitrarily close to the true worst-case value. Such an $\epsilon/2$-accurate solution supplies the finite-support value certificate required in Step~3 of Algorithm~\ref{BiCSalgorithm}.

\begin{corollary}[SP-O1 as an $\epsilon/2$-optimal oracle]
\label{cor:SP-O1-oracle}
Suppose $\mathcal{P}_m$ is nonempty and Assumption~\ref{assumption:regularity}(ii) holds. Suppose further that $\Omega_m^*(\bm{x}^*) < \infty$. Then for every $\epsilon > 0$, there exists a feasible solution 
$\{(\tilde{\bm{a}}_i, \tilde{P}_i)\}_{i=1}^{T_m+1}$ of \emph{SP-O1} 
satisfying
\[
    \Omega_m^*(\bm{x}^*) - \epsilon/2 
    \;\le\; \sum_{i=1}^{T_m+1} \tilde{P}_i\, f_m(\tilde{\bm{a}}_i, \bm{x}^*) 
    \;\le\; \Omega_m^*(\bm{x}^*).
\]
\end{corollary}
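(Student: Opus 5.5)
Fix $\epsilon>0$ and write $\Omega:=\Omega_m^*(\bm x^*)$, which is finite by hypothesis and exceeds $-\infty$ because $\mathcal P_m\neq\emptyset$. The upper bound is the easy half. Any feasible point of SP-O1 induces $\mathbb Q=\sum_i\tilde P_i\delta_{\tilde{\bm a}_i}$, which is a probability measure on $\mathcal A_m$ with finitely many atoms. Its moments $g_t$ are therefore finite and satisfy the moment inequalities, so $\mathbb Q\in\mathcal P_m$, and the objective of SP-O1 equals $\mathbb E_{\mathbb Q}[f_m]\le\Omega$. This is the first half of the proof of Proposition~\ref{prop:SP-O1}, and it uses no compactness. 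So the work is to construct a feasible point whose objective is at least $\Omega-\epsilon/2$.

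\textbf{Step 1 (near-optimal measure).} By the definition of the supremum, there is $\mathbb P_\epsilon\in\mathcal P_m$ with $\mathbb E_{\mathbb P_\epsilon}[f_m(\bm a_m,\bm x^*)]\ge\Omega-\epsilon/2$. Assumption~\ref{assumption:regularity}(ii), combined with the integrability built into Definition~\ref{ambiguity_set_define}, makes $f_m(\cdot,\bm x^*)$ and every $g_t$ integrable under $\mathbb P_\epsilon$.

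\textbf{Step 2 (Richter--Rogosinski reduction).} We apply \cite[Lemma~3.1]{shapiro2001duality} to the finitely many integrable functions $g_1,\ldots,g_{T_m}$ and $f_m(\cdot,\bm x^*)$ under $\mathbb P_\epsilon$. This gives atoms $\bm a_1,\ldots,\bm a_r\in\mathcal A_m$ with $r\le T_m+2$ and probability weights $q_j$ that reproduce all of these expectations exactly. In particular $\sum_j q_j g_t(\bm a_j)=v_t:=\mathbb E_{\mathbb P_\epsilon}[g_t]\le\gamma_{mt}$ and $\sum_j q_j f_m(\bm a_j,\bm x^*)\ge\Omega-\epsilon/2$. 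Unlike Proposition~\ref{prop:SP-O1}, this step needs no tightness or weak compactness, because we never pass to a limit of measures. This is why the light-tail assumption can be dropped here.

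\textbf{Step 3 (reducing to $T_m+1$ atoms).} Consider the weights LP from the proof of Proposition~\ref{prop:SP-O1}:
\[
\max_{p_j\ge0}\Big\{\sum_{j=1}^r p_j f_m(\bm a_j,\bm x^*):\ \sum_{j=1}^r p_j=1,\ \sum_{j=1}^r p_jg_t(\bm a_j)=v_t,\ \forall t\in[T_m]\Big\}.
\]
It is feasible, since $q$ is feasible. Its feasible set is a polytope inside the simplex, hence bounded, so it has a basic optimal solution. That solution has at most $T_m+1$ positive entries because the LP has $T_m+1$ equality rows. The basic optimum satisfies the moment bounds, and its objective is at least that of $q$, hence at least $\Omega-\epsilon/2$. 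Padding with zero weights placed at arbitrary points of $\mathcal A_m$ (nonempty, since $\mathcal P_m\neq\emptyset$) gives a feasible point of SP-O1 with exactly $T_m+1$ slots. Its objective is also at most $\Omega$ by the upper-bound argument above.

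The main obstacle is the applicability of Richter--Rogosinski in Step 2. That lemma requires the functions to be measurable and integrable, which holds here, and it needs only a probability measure on a measurable space, with no topology required. So I expect Step 2 to go through directly, and the delicate points are only bookkeeping. The lemma returns $T_m+2$ atoms rather than $T_m+1$, which Step 3 repairs. The objective value must be preserved, which is why $f_m$ is included among the matched functions.
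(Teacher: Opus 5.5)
Your proposal is correct and follows essentially the same route as the paper's proof: a near-optimal $\mathbb P^{\epsilon}\in\mathcal P_m$, Richter--Rogosinski applied to the moment functions together with $f_m(\cdot,\bm x^*)$ (the paper lists the constant function $1$ explicitly, which your ``probability weights'' implicitly account for), the basic-solution weights LP to reduce to $T_m+1$ atoms, zero-padding, and the upper bound from $\mathbb Q\in\mathcal P_m$. Your two extra remarks are correct and make explicit what the paper leaves implicit: the LP's feasible set lies in the simplex, so a basic optimum exists, and no weak-compactness argument is needed, so the light-tail assumption can be dropped.
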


\begin{proof}
By the definition of the supremum, there exists $\mathbb{P}^{\epsilon} \in \mathcal{P}_m$ with $\mathbb{E}_{\mathbb{P}^{\epsilon}}[f_m(\bm{a}_m, \bm{x}^*)] 
\geq \Omega_m^*(\bm{x}^*) - \epsilon/2$; this expectation is finite because the integrability derived from Definition~\ref{ambiguity_set_define}, together with Assumption~\ref{assumption:regularity}(ii), yields $\mathbb{E}_{\mathbb{P}^{\epsilon}}[|f_m|] < \infty$. Note that $\mathbb{P}^{\epsilon}$ need not be discrete.

Apply Richter--Rogosinski \cite[Lemma~3.1]{shapiro2001duality} to 
$1,g_1,\ldots,g_{T_m},$ and $f_m(\cdot,\bm{x}^*)$ under 
$\mathbb P^\epsilon$. This gives at most $T_m+2$ atoms and weights matching 
all these expectations. Fixing those atoms and solving the finite weights LP 
displayed above with 
$v_t=\mathbb E_{\mathbb P^\epsilon}[g_t]$ yields a basic optimal solution 
with at most $T_m+1$ positive weights. The resulting measure remains in 
$\mathcal P_m$ and has objective no smaller than that of 
$\mathbb P^\epsilon$. Zero-padding gives a feasible $T_m+1$-slot SP-O1 solution attaining the lower bound.

Conversely, any SP-O1 feasible tuple induces a measure in $\mathcal{P}_m$ whose $f_m$-expectation is at most $\Omega_m^*(\bm{x}^*)$, giving the upper bound.
\end{proof}

In Proposition~\ref{prop:SP-O1}, we explicitly construct the worst-case distribution by pre-specifying \(T_m+1\) slots for candidate scenarios and assigning each a probability weight. This reformulates the infinite-dimensional optimization over probability measures as a finite-dimensional mathematical program. In many cases, the resulting formulation can be handled directly by standard commercial solvers, without requiring additional algorithmic adjustments. For example, when \(f_m(\cdot,\bm x^*)\) and the functions \(g_t\) are linear in \(\tilde{\bm a}_i\), the model involves only bilinear terms of the form \(\tilde{P}_i \tilde{\bm a}_i\), which modern solvers can typically handle in practice. 

\paragraph{Oracle 2 (Column Generation)}

Although Oracle~1 is effective when $f_m(\cdot,\bm x^*)$ and the moment functions are linear, nonlinear or higher-order functions introduce high-degree interactions between the scenario and probability variables, which can substantially degrade computational performance. To address this, we instead decouple the two optimizations via a column generation (CG) scheme. The key observation is that for a fixed $\bm{x}^*$, optimizing the probability weights $P_{mk}$ over the current scenario set, i.e., the Pricing Master Problem (PMP), is simply a linear program, with shadow price $\alpha_m^*$ and $\bm{\beta}_m^*$ readily available. Introducing a new scenario $\tilde{\bm{a}}_m$ then corresponds to adding a new column with reduced cost $f_m(\tilde{\bm{a}}_m, \bm{x}^*) - \alpha_m^* - \sum_{t \in [T_m]} \beta_{mt}^* g_t(\tilde{\bm{a}}_m)$; the Pricing Subproblem (PSP) searches for a column whose reduced cost exceeds the prescribed threshold and terminates when none can be found. The overall procedure is summarized in Algorithm~\ref{SPCG}.

\begin{algorithm}[h!]
\footnotesize
\setlength{\itemsep}{1pt}
\caption{\textbf{\textit{SP - Oracle 2 (CG)}}} \label{SPCG}
\begin{algorithmic}
\State \textbf{Step 3.0:} Initialize tolerance $\epsilon>0$ and counter $l=0$. Fix row $m$, incumbent $\bm x^*$, and current scenario set $\hat A_m=\{\bm a_{mk}:k=0,\ldots,K_m\}\subseteq\mathcal A_m$.

\State \textbf{Step 3.1:} \textbf{Solving Pricing Master Problem (PMP).} Solve:
\vspace{-2mm}
\begin{subequations}\label{PricingMasterProblem}
\begin{align}
    \textbf{PMP:}\quad
    \omega_m^*(\bm x^*)=\max_{P_{mk}\ge0}\quad
    & \sum_{k=0}^{K_m}P_{mk}f_m(\bm a_{mk},\bm x^*) \label{PMP-objective}\\[-1mm]
    \text{s.t.}\quad
    & \sum_{k=0}^{K_m}P_{mk}=1,\quad
    \sum_{k=0}^{K_m}P_{mk}g_t(\bm a_{mk})\le\gamma_{mt}\ \ \forall t\in[T_m]. \label{PMP-constraints}
\end{align}
\end{subequations}
\vspace{-2mm}
\Statex \hspace{4em} \textbf{if} PMP is infeasible \textbf{then}
\Statex \hspace{6em} Let $(\alpha_m^*,\bm\beta_m^*)$, with $\bm\beta_m^*\ge\bm 0$, be a dual Farkas ray for the restricted PMP, and solve:
\begin{align} \label{PricingMasterProblem-infeasible}
    \textbf{PMP-infeasible:} \quad v^*_m = \sup_{\tilde{\bm a}_m \in \mathcal A_m} 
    & \quad - \alpha_m^* - \sum_{t \in [T_m]} \beta_{mt}^* g_t(\tilde{\bm a}_m) 
\end{align}
\vspace{-2mm}
\Statex \hspace{6em} Choose any $\tilde{\bm a}_m\in\mathcal A_m$ with $-\alpha_m^*-\sum_{t\in[T_m]}\beta_{mt}^*g_t(\tilde{\bm a}_m)>0$.
\Statex \hspace{6em} Update $\hat A_m\gets\hat A_m\cup\{\tilde{\bm a}_m\}$, $K_m\gets K_m+1$, and $l\gets l+1$.
\Statex \hspace{6em} Return to Step~3.1.
\Statex \hspace{4em} \textbf{else}
\Statex \hspace{6em} Let $(\alpha_m^*,\bm\beta_m^*)$ be an optimal dual solution and let $\{P_{mk}^*\}_{k=0}^{K_m}$ be an optimal primal solution.

\State \textbf{Step 3.2:} \textbf{Solving Pricing Subproblem (PSP).} Solve:
\vspace{-2mm}
\begin{align} \label{PricingSubProblem}
    \textbf{PSP:} \quad 
    \pi_m^*(\bm x^*, \alpha_m^*, \bm\beta_m^*) = \sup_{\tilde{\bm a}_m \in \mathcal A_m} 
    & \quad f_m(\tilde{\bm a}_m, \bm x^*) 
    - \alpha_m^* - \sum_{t \in [T_m]} \beta_{mt}^* g_t(\tilde{\bm a}_m) 
\end{align}
\vspace{-2mm}
\State \textbf{Step 3.3:} Stopping criterion:
\Statex \hspace{4em} \textbf{if} $\pi_m^*\le\epsilon/2$ \textbf{then}
\Statex \hspace{6em} Return $\mathbb P_m^{\mathrm{PMP}}:=\sum_{k=0}^{K_m}P_{mk}^*\delta_{\bm a_{mk}}$ and $\omega_m^*(\bm x^*)$.
\Statex \hspace{4em} \textbf{else}
\Statex \hspace{6em} Choose any $\tilde{\bm a}_m\in\mathcal A_m$ whose reduced cost exceeds $\epsilon/2$.
\Statex \hspace{6em} Update $\hat A_m\gets\hat A_m\cup\{\tilde{\bm a}_m\}$, $K_m\gets K_m+1$, and $l\gets l+1$.
\Statex \hspace{6em} Return to Step~3.1.
\end{algorithmic}
\end{algorithm}

Under Step~1 of Algorithm~\ref{BiCSalgorithm}, the MP-DRO (\ref{MasterProblem}) remains feasible throughout standard BiCS. The infeasibility-handling branch of Algorithm~\ref{SPCG} and the following proposition are included only for alternative initializations or extensions.

\begin{proposition}[\textbf{PMP Infeasibility}]
\label{prop:PMP-infeasible}
Suppose $\mathcal P_m$ is nonempty but the PMP restricted to $\hat A_m$ is infeasible. Let $(\alpha_m^*,\bm\beta_m^*)$ with $\bm\beta_m^*\ge\bm 0$ be a dual ray certifying this infeasibility. Then the supremum $v_m^*$ in~\eqref{PricingMasterProblem-infeasible} is strictly positive, and augmenting $\hat A_m$ with any $\tilde{\bm a}_m\in\mathcal A_m$ having strictly positive ray objective invalidates the current certificate.
\end{proposition}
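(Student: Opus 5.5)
The plan is to use LP Farkas duality for the restricted PMP together with the existence of a feasible distribution in $\mathcal P_m$. First I would write the restricted PMP over $\hat A_m=\{\bm a_{mk}\}_{k=0}^{K_m}$ as the linear system $\sum_k P_{mk}=1$, $\sum_k P_{mk}g_t(\bm a_{mk})\le\gamma_{mt}$, $P_{mk}\ge0$. Farkas' lemma then characterizes infeasibility by a ray $(\alpha_m^*,\bm\beta_m^*)$ with $\bm\beta_m^*\ge\bm 0$, $\alpha_m^*$ free, and two properties. The first is
\[
\alpha_m^*+\sum_{t}\beta_{mt}^*g_t(\bm a_{mk})\ge 0\quad\forall k .
\]
The second is
\[
\alpha_m^*+\sum_t\beta_{mt}^*\gamma_{mt}<0 .
\]
I would fix this sign convention explicitly, since the statement's ray objective $-\alpha_m^*-\sum_t\beta_{mt}^*g_t(\tilde{\bm a}_m)$ is the negative of the column's dual slack. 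With that convention, every current column has ray objective $\le 0$.

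To show $v_m^*>0$, I would take any $\mathbb P\in\mathcal P_m$, which exists by nonemptiness. By Definition~\ref{ambiguity_set_define}, each $g_t$ is $\mathbb P$-integrable, so I may integrate the ray expression:
\[
\mathbb E_{\mathbb P}\Big[-\alpha_m^*-\sum_t\beta_{mt}^*g_t(\bm a)\Big]=-\alpha_m^*-\sum_t\beta_{mt}^*\mathbb E_{\mathbb P}[g_t]\ge -\alpha_m^*-\sum_t\beta_{mt}^*\gamma_{mt}>0 .
\]
The middle inequality uses $\bm\beta_m^*\ge\bm0$ together with the moment bounds. Since the integrand has positive expectation, it is positive at some point of $\mathcal A_m$, because $\mathbb P(\mathcal A_m)=1$. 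Hence the supremum $v_m^*$ is strictly positive, and a point $\tilde{\bm a}_m$ with positive ray objective exists, so the step in Algorithm~\ref{SPCG} is well defined.

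For the invalidation claim, I would add $\tilde{\bm a}_m$ as a new column. The first Farkas condition, $\alpha_m^*+\sum_t\beta_{mt}^*g_t(\tilde{\bm a}_m)\ge0$, now fails for this column. Therefore $(\alpha_m^*,\bm\beta_m^*)$ no longer certifies infeasibility of the augmented PMP, and any subsequent certificate must differ.

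The main obstacle is bookkeeping rather than depth. The sign convention of the Farkas ray has to match the ray objective in~\eqref{PricingMasterProblem-infeasible}, with inequality rows paired with nonnegative multipliers and the equality row with a free one. I also need to justify that integrating the ray expression is legitimate, which is exactly the integrability built into Definition~\ref{ambiguity_set_define}. One caveat is that I only claim this particular certificate is invalidated, not that the augmented PMP becomes feasible.
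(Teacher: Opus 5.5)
Your proposal is correct and matches the paper's proof: you use the same Farkas certificate with the same sign convention, you integrate the ray expression against an arbitrary $\mathbb P\in\mathcal P_m$, and you note that a column with positive ray objective violates the certificate's column inequality. The only difference is presentational. You conclude directly that an integrand with positive expectation must be positive somewhere, whereas the paper reaches the same inequality chain by contradiction.
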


\begin{proof}
Since the restricted PMP over $\hat{A}_m = \{\bm{a}_{mk}\}_{k=0}^{K_m}$ is a finite linear program, Farkas' Lemma yields a dual ray $(\alpha_m^*, \bm{\beta}_m^*) \neq (0,0)$ with $\beta_{mt}^* \geq 0$ such that
\begin{align}
    \alpha_m^* + \sum_{t \in [T_m]} \beta_{mt}^*\, g_t(\bm{a}_{mk}) &\geq 0, 
    \quad \forall k = 0,\ldots,K_m, \label{eq:ray_cols}\\
    \alpha_m^* + \sum_{t \in [T_m]} \beta_{mt}^*\, \gamma_{mt} &< 0. \label{eq:ray_rhs}
\end{align}
Suppose, for contradiction, that $\alpha_m^* + \sum_{t \in [T_m]} \beta_{mt}^*\, g_t(\bm{a}) \geq 0$ for all $\bm{a} \in \mathcal{A}_m$. Since $\mathcal P_m$ is nonempty, take any $\mathbb P \in \mathcal{P}_m$ and then take expectations to obtain
\[
    0 \;\le\; \alpha_m^* + \sum_{t \in [T_m]} \beta_{mt}^*\, \mathbb{E}_{\mathbb P}[g_t(\bm{a})]
    \;\le\; \alpha_m^* + \sum_{t \in [T_m]} \beta_{mt}^*\, \gamma_{mt} \;<\; 0,
\]
where the second inequality uses $\beta_{mt}^* \ge 0$ with $\mathbb{E}_{\mathbb P}[g_t(\bm{a})] \leq \gamma_{mt}$ from \eqref{p_gm_ambiguity}, 
and the last step is \eqref{eq:ray_rhs}. This contradiction gives $v_m^* > 0$.

For the second claim, any $\tilde{\bm{a}}_m \in \mathcal{A}_m$ with 
$-\alpha_m^* - \sum_{t \in [T_m]} \beta_{mt}^*\, g_t(\tilde{\bm{a}}_m) > 0$ 
violates \eqref{eq:ray_cols} at the new column, so $(\alpha_m^*, \bm{\beta}_m^*)$ is no longer a valid infeasibility certificate for the augmented restricted PMP. Such a point exists because $v_m^* > 0$.
\end{proof}

When the PMP is feasible, its current distribution gives 
$\omega_m^*(\bm x^*)\le\Omega_m^*(\bm x^*)$. If $\pi_m^*$ is the exact PSP supremum, then 
$f_m(\bm a,\bm x^*)\le\alpha_m^*+\pi_m^*+\sum_{t\in[T_m]}\beta_{mt}^*g_t(\bm a)$ 
for every $\bm a\in\mathcal A_m$. Integrating this inequality over any 
$\mathbb P_m\in\mathcal P_m$ and using PMP strong duality yields
\[
\omega_m^*(\bm x^*)\le\Omega_m^*(\bm x^*)\le
U_m:=\alpha_m^*+\pi_m^*+\sum_{t\in[T_m]}\beta_{mt}^*\gamma_{mt}
=\omega_m^*(\bm x^*)+\pi_m^*.
\]

\begin{theorem}[Convergence of Algorithm~\ref{SPCG}]
\label{ConvergeProofSPCG}
Fix $m\in[M]$, $\bm x^*\in\mathcal X$, and $\epsilon>0$. Suppose Assumptions~\ref{assumption:regularity} and~\ref{assumption:light-tail} hold, and Algorithm~\ref{SPCG} reaches a feasible restricted PMP at some finite iteration $l_0$. From iteration $l_0$ onward, suppose every restricted PMP primal--dual pair is solved globally, every PSP value is computed exactly, and all generated columns are retained. For $l\ge l_0$, define
\[
\rho_l(\bm a):=f_m(\bm a,\bm x^*)-\alpha_m^{*(l)}-\sum_{t\in[T_m]}\beta_{mt}^{*(l)}g_t(\bm a).
\]
For $R>0$, define $K_R:=\{\bm a\in\mathcal A_m:g_{t_0}(\bm a)\le R\}$. Assumption~\ref{assumption:regularity}(i) makes $\mathcal A_m$ closed, while Assumption~\ref{assumption:light-tail} makes $g_{t_0}$ continuous and coercive, so $K_R$ is compact. Assume that for some $R>0$, $\displaystyle \sup_{l\ge l_0}\sup_{\bm a\in\mathcal A_m:\,g_{t_0}(\bm a)>R}\rho_l(\bm a)\le\epsilon/2$ and the family $\{\rho_l|_{K_R}\}_{l\ge l_0}$ is equicontinuous. Then Algorithm~\ref{SPCG} terminates in finitely many iterations and returns an $\epsilon/2$-optimal solution to the subproblem~\eqref{SubProblem}. Specifically, the returned distribution $\mathbb P_m^{\mathrm{PMP}}\in\mathcal P_m$ and value $\omega_m^*(\bm x^*)$ satisfy
\[
\Omega_m^*(\bm x^*)-\epsilon/2
\le \omega_m^*(\bm x^*)
=\mathbb E_{\mathbb P_m^{\mathrm{PMP}}}[f_m(\bm a_m,\bm x^*)]
\le \Omega_m^*(\bm x^*).
\]
\end{theorem}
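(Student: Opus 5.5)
Correctness at termination comes from the bound displayed just before the theorem, and finite termination comes from a contradiction argument of the same kind as Part~(ii) of Theorem~\ref{convergence-BiCS}. Suppose Algorithm~\ref{SPCG} stops at some iteration $l\ge l_0$. Then $\pi_m^{*(l)}\le\epsilon/2$. Because the restricted PMP is solved globally and strong duality holds for this finite LP, the primal value satisfies $\omega_m^*(\bm x^*)=\alpha_m^{*(l)}+\sum_t\beta_{mt}^{*(l)}\gamma_{mt}$. Since $\pi_m^{*(l)}$ is the exact PSP supremum, $f_m(\bm a,\bm x^*)\le\alpha_m^{*(l)}+\pi_m^{*(l)}+\sum_t\beta_{mt}^{*(l)}g_t(\bm a)$ for all $\bm a\in\mathcal A_m$. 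Integrating this against any $\mathbb P\in\mathcal P_m$, which is legitimate by Definition~\ref{ambiguity_set_define} and Assumption~\ref{assumption:regularity}(ii), and using $\beta^{*(l)}\ge 0$ gives $\Omega_m^*(\bm x^*)\le\omega_m^*(\bm x^*)+\epsilon/2$. The returned measure $\mathbb P_m^{\mathrm{PMP}}$ is a finite-support measure satisfying the PMP constraints, so it lies in $\mathcal P_m$. Its expectation equals $\omega_m^*(\bm x^*)$, which gives the upper bound $\omega_m^*\le\Omega_m^*$. This yields the displayed sandwich.

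For finiteness, assume that the algorithm never stops after $l_0$. At each iteration $l$ it then adds a column $\tilde{\bm a}^{(l)}$ with $\rho_l(\tilde{\bm a}^{(l)})>\epsilon/2$. The uniform tail hypothesis gives $\rho_l\le\epsilon/2$ wherever $g_{t_0}>R$. Hence $g_{t_0}(\tilde{\bm a}^{(l)})\le R$, so every generated column lies in the compact set $K_R$. The key LP fact concerns columns retained from earlier iterations. For $l'>l$, the column $\tilde{\bm a}^{(l)}$ belongs to the restricted PMP at iteration $l'$. Dual feasibility of $(\alpha^{*(l')},\bm\beta^{*(l')})$ for the maximization LP therefore forces its reduced cost to be nonpositive, i.e., $\rho_{l'}(\tilde{\bm a}^{(l)})\le 0$.

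Compactness of $K_R$ gives a convergent subsequence $\tilde{\bm a}^{(l_j)}\to\bar{\bm a}\in K_R$. By equicontinuity of $\{\rho_l|_{K_R}\}$, there is $\delta>0$ such that $\|\bm a-\bm a'\|\le\delta$ implies $|\rho_l(\bm a)-\rho_l(\bm a')|<\epsilon/4$ for all $l$. Choose $j<j'$ with $\|\tilde{\bm a}^{(l_j)}-\tilde{\bm a}^{(l_{j'})}\|\le\delta$. Then
\[
\epsilon/2<\rho_{l_{j'}}(\tilde{\bm a}^{(l_{j'})})<\rho_{l_{j'}}(\tilde{\bm a}^{(l_j)})+\epsilon/4\le\epsilon/4,
\]
which is a contradiction. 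So the algorithm stops after finitely many iterations. Iterations before $l_0$ are finite by hypothesis.

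The delicate step is this uniformity. Without the tail bound, the generated columns could escape to infinity along the unbounded $\mathcal A_m$, and without equicontinuity, the old-column inequality $\rho_{l'}(\tilde{\bm a}^{(l)})\le0$ would not transfer to nearby new columns, because the duals change from one iteration to the next. Both difficulties are removed by the stated assumptions, with Assumption~\ref{assumption:light-tail} used only to guarantee that $K_R$ is compact. I also need to check that retaining columns keeps the PMP feasible from $l_0$ on, so that the dual solutions exist at every later iteration. This holds because adding columns only enlarges the feasible set.
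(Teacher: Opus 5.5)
Your proposal is correct and follows the paper's proof essentially step by step. At termination you use the same sandwich from finite-LP strong duality plus weak duality for the full moment problem. For finiteness you use the same contradiction: the tail bound confines new columns to $K_R$, retained columns satisfy $\rho_{l'}\le 0$, and equicontinuity then yields a contradiction. The only cosmetic difference is that you extract a convergent (hence Cauchy) subsequence with an $\epsilon/4$ modulus, where the paper argues pairwise $\delta$-separation in the compact set $K_R$ with an $\epsilon/2$ modulus.
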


\begin{proof}
\noindent\textbf{\textup{(i)} Correctness upon termination.}
Suppose Algorithm~\ref{SPCG} terminates at iteration $\bar l\ge l_0$. The returned $\mathbb P_m^{\mathrm{PMP}}$ is feasible for the restricted PMP and therefore belongs to $\mathcal P_m$. Hence
\[
\omega_m^*(\bm x^*)
=\mathbb E_{\mathbb P_m^{\mathrm{PMP}}}[f_m(\bm a_m,\bm x^*)]
\le \Omega_m^*(\bm x^*).
\]
Because the PSP value is computed exactly, the stopping rule implies $\rho_{\bar l}(\bm a)\le\epsilon/2$ for every $\bm a\in\mathcal A_m$. Thus $(\alpha_m^{*(\bar l)}+\epsilon/2,\bm\beta_m^{*(\bar l)})$ is feasible for the full moment dual. Weak duality for the full subproblem and strong duality for the finite restricted PMP give
\[
\Omega_m^*(\bm x^*)
\le \alpha_m^{*(\bar l)}+\epsilon/2
+\sum_{t\in[T_m]}\beta_{mt}^{*(\bar l)}\gamma_{mt}
=\omega_m^*(\bm x^*)+\epsilon/2.
\]
Together, these inequalities establish the stated $\epsilon/2$-optimality certificate.

\smallskip
\noindent\textbf{\textup{(ii)} Finite termination.}
Suppose, for contradiction, that infinitely many columns are generated at iterations $l\ge l_0$. At each such iteration, the continuation rule chooses an actual point $\tilde{\bm a}_m^{(l)}$ satisfying $\rho_l(\tilde{\bm a}_m^{(l)})>\epsilon/2$. The uniform-tail condition places every such point in $K_R$.

By equicontinuity, there exists $\delta>0$ such that $\|\bm a-\bm a'\|\le\delta$ implies $|\rho_l(\bm a)-\rho_l(\bm a')|<\epsilon/2$ for every $l\ge l_0$. Since all generated columns are retained, dual feasibility at any later iteration $l'>l$ gives $\rho_{l'}(\tilde{\bm a}_m^{(l)})\le0$, whereas $\rho_{l'}(\tilde{\bm a}_m^{(l')})>\epsilon/2$. Hence the generated points are pairwise $\delta$-separated in compact $K_R$, a contradiction. Thus only finitely many columns are generated from iteration $l_0$ onward. Since $l_0$ is finite, Algorithm~\ref{SPCG} terminates finitely.
\end{proof}

\begin{remark}[Exactness]
For the zero-tolerance variant, if Algorithm~\ref{SPCG} terminates at an iteration $l$ with $\sup_{\bm a\in\mathcal A_m}\rho_l(\bm a)\le0$, then $\omega_m^*(\bm x^*)=\Omega_m^*(\bm x^*)$. Theorem~\ref{ConvergeProofSPCG} does not prove finite termination for $\epsilon=0$, because the positive separation used in its proof disappears.
\end{remark}

Proposition~\ref{prop:SP-O1} establishes an exact finite-support formulation for Oracle~1, while Corollary~\ref{cor:SP-O1-oracle} provides arbitrarily accurate feasible finite-support certificates when attainment is not guaranteed. For Oracle~2, Theorem~\ref{ConvergeProofSPCG} establishes finite termination under its stated conditions. Together, these results extend the primal oracle analysis to generalized-moment single-stage DRO over closed, potentially unbounded sample spaces without requiring the entire sample space to be compact.

From a computational perspective, the CG structure provides greater flexibility by decoupling scenario selection from probability optimization. Once a finite support is fixed, the probability assignment in the PMP reduces to a linear program, while the scenario search in the PSP can accommodate nonlinear or higher-order structures in the sample space. Actually, although high-order polynomial moment PSPs have been reported to be challenging in \cite{mehrotra2014cutting}, our numerical experiments indicate that second-order PSPs can be solved efficiently in practice. Detailed computational results are reported in Section~\ref{Numerical-Basic-DRO}.

\subsubsection{Computational Enhancements} \label{sec:Enhancement}

In practice, the efficiency of BiCS is shaped by two distinct bottlenecks. First, every iteration of Algorithm~\ref{BiCSalgorithm} appends new scenarios to the MP-DRO \eqref{MasterProblem}, so the MP-DRO can grow rapidly if all scenarios produced by the subproblem are kept. Second, when Oracle~2 is used, the CG procedure solves the subproblem to the prescribed accuracy for each fixed $\bm x^*$, even though the BiCS outer loop only needs to determine whether the full DRO constraint $\Omega_m^*(\bm x^*)\le b_m$ is satisfied. Without an outer-loop-aware stopping rule, CG can generate many scenarios whose only effect is to refine $\Omega_m^*(\bm x^*)$ beyond what is needed. The two enhancements below target these two bottlenecks separately.

One is scenario reduction, which is applicable to both oracles. It helps control the size of MP-DRO~\eqref{MasterProblem} by retaining the newly generated scenarios in a more selective fashion. One straightforward way is to retain only scenarios carrying positive probabilities, which is implemented as the default in our empirical study. In fact, we might have multiple optimal oracle solutions. Hence, a mixed-integer program that preserves optimality but minimizes the number of scenarios with positive probabilities can be designed and employed, as described in Appendix~\ref{appendix:scenario_reduction}.

Another one is bound-based early stopping for Oracle~2. This enhancement is specific to the CG-based Oracle~2. For the BiCS outer loop, CG needs only return a feasible distribution requiring a support update, an $\epsilon/2$-accurate feasible distribution, or an upper bound certifying that no update is required. Further scenario generation is unnecessary once any such return is available.

The bound identity above gives the priority rule. If $\omega_m^*>b_m+\epsilon/2$, the current PMP distribution is returned for a support update. Otherwise, if $\pi_m^*\le\epsilon/2$, the gap $U_m-\omega_m^*=\pi_m^*$ certifies that the current PMP value is $\epsilon/2$-accurate. Otherwise, if $U_m\le b_m+\epsilon/2$, the upper bound certifies that no update is required. Only when none applies is an actual scenario with reduced cost greater than $\epsilon/2$ added. Algorithm~\ref{EnhancedSPCG} replaces Step~3.3 of Algorithm~\ref{SPCG} with this rule.

\begin{algorithm}[H]
\caption{\textbf{\textit{Bound-based stopping criterion for SP - Oracle 2 (CG)}}}
\label{EnhancedSPCG}
\begin{algorithmic}
\State \textbf{Step 3.3*: Stopping Criterion with Bounds.}
\Statex \hspace{5em} Set $\mathbb P_m^{\mathrm{PMP}}:=\sum_{k=0}^{K_m}P_{mk}^*\delta_{\bm a_{mk}}$ and $U_m:=\alpha_m^*+\pi_m^*+\sum_{t\in[T_m]}\beta_{mt}^*\gamma_{mt}=\omega_m^*+\pi_m^*$.
\Statex \hspace{5em} \textbf{if} $\omega_m^*>b_m+\epsilon/2$ \textbf{then} return $(\mathbb P_m^{\mathrm{PMP}},\omega_m^*)$ to Step~3 for a support update.
\Statex \hspace{5em} \textbf{else if} $\pi_m^*\le\epsilon/2$ \textbf{then} return $(\mathbb P_m^{\mathrm{PMP}},\omega_m^*)$ as an $\epsilon/2$-accurate oracle value.
\Statex \hspace{5em} \textbf{else if} $U_m\le b_m+\epsilon/2$ \textbf{then} return $U_m$ as an upper-bound certificate.
\Statex \hspace{5em} \textbf{else} choose any $\tilde{\bm a}_m\in\mathcal A_m$ whose reduced cost exceeds $\epsilon/2$, update $\hat A_m\gets\hat A_m\cup\{\tilde{\bm a}_m\}$, $K_m\gets K_m+1$, $l\gets l+1$, and return to Step~3.1.
\end{algorithmic}
\end{algorithm}

\begin{remark}
By the bound identity above, the first branch of Algorithm~\ref{EnhancedSPCG} returns a valid support-update distribution, the second returns the standard $\epsilon/2$-accurate oracle value, and the third directly certifies $\Omega_m^*(\bm x^*)\le U_m\le b_m+\epsilon/2$ and is therefore treated as satisfying the row-wise stopping test. Hence the proof of Theorem~\ref{convergence-BiCS} extends directly to the enhanced rule, with the upper-bound inequality replacing the oracle approximation inequality for such rows; whenever standard CG terminates, the enhanced rule stops no later.
\end{remark}

Together, the two enhancements address the two practical costs of BiCS, i.e., master-problem growth and subproblem effort, without altering the applicable guarantees.

\section{Almost Sure Distributionally Robust Optimization} \label{sec:AS_DRO}

Although rarely stated explicitly, AS-DRO arises naturally when a nonempty violation event in the sample space has probability zero under every admissible distribution. Additional support or mass restrictions, including restrictions incorporated through local information, may make the support induced by the ambiguity set a strict subset of the sample space. We examine how local information is reflected in the construction of ambiguity sets in Section~\ref{sec:local_info}.

Building on the standard DRO literature \cite{wiesemann2014distributionally, rahimian2019distributionally, kuhn2025distributionally}, it is widely understood that when the ambiguity set contains all probability distributions supported on the sample space $\mathcal{A}$, the resulting DRO formulation reduces to classical RO. While the statement itself is correct, it is sometimes read in a way that implicitly equates the sample space with the support. However, generally speaking, the sample space and the support induced by an ambiguity set do not have to coincide. For example, when Wasserstein ambiguity is combined with additional support or mass restrictions, admissible distributions may concentrate on a strict subset of the sample space. Therefore, treating the sample space and the support as interchangeable hides a modeling regime in which feasibility is enforced almost surely under every admissible distribution rather than pointwise on the full sample space.

Motivated by this observation, we define \emph{AS-DRO}, where a decision $\bm x$ is feasible only if the violation event has probability zero under every admissible distribution. This formulation fills the intermediate gap between RO and standard DRO. For instance, in a supply chain context subject to rare but extreme demand spikes, RO protects against the worst-case spike regardless of its probability, whereas standard DRO may remain feasible when the spike carries negligible probability and therefore has little effect on the worst-case expectation. In contrast, AS-DRO rules out any decision for which an admissible distribution assigns positive probability to a violation.  

From a computational perspective, AS-DRO introduces additional challenges. As we saw in Section \ref{sec:DRO}, standard DRO often leads to convex formulations under common ambiguity sets. In contrast, AS-DRO enforces essential-supremum feasibility over the support, and the resulting formulation may be nonconvex depending on the structure of $f_m$ and $\mathcal X$. Moreover, AS-DRO has rarely been studied as a standalone optimization problem. Together, this possible nonconvexity and the lack of dedicated study substantially limit solution approaches. However, these challenges do not affect the underlying modeling structure. The worst-case structure of AS-DRO remains distribution-based and admits the same primal decomposition logic developed for standard DRO.

\subsection{AS-DRO Formulation}

We now formalize the AS-DRO model and discuss its structural properties. For ease of comparison, AS-DRO is defined on the same sample space and ambiguity sets as standard DRO, but differs in how feasibility is enforced. Rather than requiring constraints to hold in expectation or uniformly over the entire sample space, AS-DRO requires each constraint to be satisfied almost surely on the support induced by the ambiguity set.

\begin{definition}\label{def:Marginal-ambiguity}
For each constraint \(m\in[M]\), let \((\mathcal A_m,\mathcal F_m)\) be a measurable space and let \(\mathcal P_m\) be an ambiguity set of probability measures on \((\mathcal A_m,\mathcal F_m)\). Define \(\mathcal A=\prod_{m=1}^M\mathcal A_m\) and \(\mathcal F=\bigotimes_{m=1}^M\mathcal F_m\). For any \(\mathbb P\in\mathcal M_+(\mathcal A,\mathcal F)\), let \(\mathbb P^m\) denote its marginal on \((\mathcal A_m,\mathcal F_m)\), so that \(\mathbb P^m(B)=\mathbb P(\mathcal A_1\times\cdots\times\mathcal A_{m-1}\times B\times\mathcal A_{m+1}\times\cdots\times\mathcal A_M)\) for all \(B\in\mathcal F_m\). The joint ambiguity set induced by the marginal ambiguity sets is
\[
\mathcal P=\bigl\{\mathbb P\in\mathcal M_+(\mathcal A,\mathcal F)\,\bigm|\,\mathbb P(\mathcal A)=1,\ \mathbb P^m\in\mathcal P_m,\ \forall m\in[M]\bigr\}.
\]
\end{definition}

Throughout this section, for any ambiguity set $\mathcal Q$ and measurable event $E$, we write $\mathbf 1_E \stackrel{\mathbb Q\text{-a.s.}}{=} 1$ for every $\mathbb Q\in\mathcal Q$ to mean that $\mathbb Q(E)=1$ for every $\mathbb Q\in\mathcal Q$, or equivalently, $\inf_{\mathbb Q\in\mathcal Q}\mathbb Q(E)=1$ and $\sup_{\mathbb Q\in\mathcal Q}\mathbb Q(E^c)=0$. Accordingly, the AS-DRO constraints can be written as follow
\begin{equation}\label{as_dro-c}
    \mathbf{1}_{\{f_m(\bm{a}_m,\bm{x})\leq b_m\}}
    \stackrel{\mathbb{P}_m\text{-a.s.}}{=} 1,
    \quad \forall \mathbb{P}_m\in\mathcal{P}_m,\ \forall m\in[M].
\end{equation}

In particular, AS-DRO lies between RO and standard DRO in how strictly feasibility is enforced. This relationship is illustrated in Figure~\ref{Risk Measure Comparison}. RO represents one extreme, requiring pointwise (supremum-based) feasibility over the entire sample space. At the other extreme, standard DRO enforces feasibility only in worst-case expectation. By contrast, AS-DRO requires feasibility almost surely under every distribution in the ambiguity set, which corresponds to protection against the essential supremum of the constraint violation. Theorem~\ref{RO_DRO_Inclusion} formalizes this comparison by showing how the feasible regions of RO, AS-DRO, and standard DRO are nested.

\begin{theorem}\label{RO_DRO_Inclusion}
    Let \(\mathcal{X}_{\text{RO}}, \mathcal{X}_{\text{AS-DRO}}, \mathcal{X}_{\text{DRO}}\) denote the feasible regions for RO, AS-DRO, and standard DRO, respectively. Then, the following inclusion holds:
    \[
    \mathcal{X}_{\text{RO}} \subseteq  \mathcal{X}_{\text{AS-DRO}} \subseteq \mathcal{X}_{\text{DRO}},
    \]
    where:
    \begin{itemize}
        \item \(\mathcal{X}_{\text{RO}} = \left\{\bm{x} \in \mathcal{X} \mid f_m(\bm{a}_m, \bm{x}) \leq b_m, \, \forall \bm{a}_m \in \mathcal{A}_m, \, \forall m \in [M]\right\},\)
        \item \(\mathcal{X}_{\text{AS-DRO}} = \left\{\bm{x} \in \mathcal{X} \mid \inf_{\mathbb{P}_m \in \mathcal{P}_m} \mathbb{P}_m\left[f_m(\bm{a}_m, \bm{x}) \leq b_m\right] \stackrel{\mathbb{P}_m\text{-a.s.}}{=} 1, \, \forall m \in [M]\right\},\)
        \item \(\mathcal{X}_{\text{DRO}} = \left\{\bm{x} \in \mathcal{X} \mid \sup_{\mathbb{P}_m \in \mathcal{P}_m} \mathbb{E}_{\mathbb{P}_m}\left[f_m(\bm{a}_m, \bm{x})\right] \leq b_m, \, \forall m \in [M]\right\}.\)
    \end{itemize}
\end{theorem}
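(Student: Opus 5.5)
We prove the two inclusions separately and row by row: fix $\bm x\in\mathcal X$ and $m\in[M]$, and write $E_m(\bm x):=\{\bm a_m\in\mathcal A_m: f_m(\bm a_m,\bm x)\le b_m\}$. This set is measurable because $f_m(\cdot,\bm x)$ is upper semicontinuous (Assumption~\ref{assumption:regularity}(i)). We read the AS-DRO condition as $\mathbb P_m(E_m(\bm x))=1$ for every $\mathbb P_m\in\mathcal P_m$, following the convention stated before \eqref{as_dro-c}. Since all three feasible regions are intersections over $m\in[M]$ of row-wise conditions, it suffices to establish the implications row by row.

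\emph{First inclusion, $\mathcal X_{\text{RO}}\subseteq\mathcal X_{\text{AS-DRO}}$.} If $\bm x\in\mathcal X_{\text{RO}}$, then $E_m(\bm x)=\mathcal A_m$. Every $\mathbb P_m\in\mathcal P_m$ satisfies $\mathbb P_m(\mathcal A_m)=1$ by Definition~\ref{ambiguity_set_define}, so $\mathbb P_m(E_m(\bm x))=1$ for all $\mathbb P_m\in\mathcal P_m$. Taking the infimum over $\mathcal P_m$ gives the value $1$. This step is immediate.

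\emph{Second inclusion, $\mathcal X_{\text{AS-DRO}}\subseteq\mathcal X_{\text{DRO}}$.} Let $\bm x\in\mathcal X_{\text{AS-DRO}}$ and fix any $\mathbb P_m\in\mathcal P_m$. By Assumption~\ref{assumption:regularity}(ii), $f_m(\cdot,\bm x)$ is $\mathbb P_m$-integrable, so its expectation is a well-defined finite number. Because $\mathbb P_m(E_m(\bm x)^c)=0$, we have $f_m(\bm a_m,\bm x)\le b_m$ $\mathbb P_m$-a.s. Monotonicity of the integral then gives
\[
\mathbb E_{\mathbb P_m}[f_m(\bm a_m,\bm x)]=\int_{E_m(\bm x)} f_m\,d\mathbb P_m\le b_m\,\mathbb P_m(E_m(\bm x))=b_m .
\]
Taking the supremum over $\mathbb P_m\in\mathcal P_m$ yields $\sup_{\mathbb P_m\in\mathcal P_m}\mathbb E_{\mathbb P_m}[f_m]\le b_m$, so $\bm x\in\mathcal X_{\text{DRO}}$.

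The only delicate point is interpretive rather than technical: the set $\mathcal X_{\text{AS-DRO}}$ combines an infimum with an ``a.s.'' symbol, and we must fix its meaning as $\inf_{\mathbb P_m}\mathbb P_m(E_m(\bm x))=1$. Under this reading, each $\mathbb P_m$ individually assigns full mass to $E_m(\bm x)$, since every probability is at most $1$. The remaining requirements are measurability of $E_m(\bm x)$ and integrability of $f_m$, both of which the standing assumptions supply.
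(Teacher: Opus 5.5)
Your proof is correct and follows the paper's argument essentially step for step. Pointwise feasibility on $\mathcal A_m$ gives full mass on the satisfaction set, and almost-sure feasibility combined with the standing integrability and monotonicity of the integral gives the worst-case expectation bound. Your remark that $E_m(\bm x)$ is Borel because $f_m(\cdot,\bm x)$ is upper semicontinuous is a small detail the paper leaves implicit.
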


\begin{proof} 
\textbf{\(\mathcal{X}_{\text{RO}} \subseteq \mathcal{X}_{\text{AS-DRO}}\).}  If \(\bm x\in\mathcal X_{\mathrm{RO}}\), then \(f_m(\bm a_m,\bm x)\le b_m\) for every \(\bm a_m\in \mathcal A_m\); hence, for any \(\mathbb P_m\in\mathcal P_m\),
\(\mathbb P_m[f_m(\bm a_m,\bm x)\le b_m]=1\stackrel{\mathbb{P}_m\text{-a.s.}}{=} 1\). Thus \(\bm x\in\mathcal X_{\mathrm{AS-DRO}}\).\\
\textbf{\(\mathcal{X}_{\text{AS-DRO}} \subseteq \mathcal{X}_{\text{DRO}}\).} If \(\bm x\in\mathcal X_{\mathrm{AS-DRO}}\), the probability-one convention gives \(\mathbb P_m[f_m(\bm a_m,\bm x)\le b_m]\stackrel{\mathbb{P}_m\text{-a.s.}}{=}1\) for every \(\mathbb P_m\in\mathcal P_m\). Thus \(f_m(\bm a_m,\bm x)\le b_m\), \(\mathbb P_m\)-almost surely. The standing integrability condition makes the expectation well defined, so \(\mathbb E_{\mathbb P_m}[f_m(\bm a_m,\bm x)]\le b_m\) for every \(\mathbb P_m\in\mathcal P_m\); taking the supremum gives \(\bm x\in\mathcal X_{\mathrm{DRO}}\).
\end{proof}

Theorem~\ref{RO_DRO_Inclusion} confirms that the AS-DRO feasible region lies between those of RO and standard DRO. Whether AS-DRO indeed leads to a distinct feasible region, however, depends on how the ambiguity set allocates probability mass across scenarios. The following remark characterizes when the inclusions are strict, and the subsequent examples in Tables~\ref{tab:discrete_example} and~\ref{tab:continuous_example} illustrate these cases.

\begin{remark}\label{rem:strict}
The first inclusion is strict exactly when there exists \(\bm x\in\mathcal X_{\mathrm{AS-DRO}}\setminus\mathcal X_{\mathrm{RO}}\). Equivalently, with \(V_m(\bm x):=\{\bm a_m\in\mathcal A_m:f_m(\bm a_m,\bm x)>b_m\}\), every \(V_m(\bm x)\) is null under every \(\mathbb P_m\in\mathcal P_m\), while \(V_{m_0}(\bm x)\ne\varnothing\) for at least one row \(m_0\). The second inclusion is strict exactly when there exists \(\bm x\in\mathcal X_{\mathrm{DRO}}\setminus\mathcal X_{\mathrm{AS-DRO}}\): all worst-case expectation constraints are feasible, but \(\mathbb P_{m_0}(V_{m_0}(\bm x))>0\) for some row \(m_0\) and some \(\mathbb P_{m_0}\in\mathcal P_{m_0}\).
\end{remark}

\begin{table}[h]
\centering
\begin{tabular}{lccc}
\toprule
Model & Constraint & $\mathcal X$ under $\mathcal P_1$ & $\mathcal X$ under $\mathcal P_2$\\
\midrule
RO   & $\max_{a\in \mathcal A} ax\le1$                                   & $x\le\frac13$ & $x\le\frac13$\\[1mm]
DRO  & $\max_{P\in\mathcal P_i}\;\mathbb E_P[a]\,x\le1$                & $x\le 1$      & $x\le\frac12$\\[1mm]
AS-DRO & $\inf_{P\in\mathcal P_i} \mathbb E_P[\mathbf1_{\{ax\le1\}}]\ge1_-$ & $x\le 1$      & $x\le\frac13$\\
\bottomrule
\end{tabular}
\caption{Discrete sample space $\mathcal{A}=\{1,3\}$ with $x\in\mathbb R_+$ and moment inequality ambiguity sets: $\mathcal{P}_1=\Bigl\{P:\;P_1+P_3=1,\;P_1+3P_3\le 1\Bigr\} \text{ and } \mathcal{P}_2=\Bigl\{P:\;P_1+P_3=1,\;P_1+3P_3\le 2\Bigr\}.$
}
\label{tab:discrete_example}
\end{table}

\begin{table}[h]
\centering
\begin{tabular}{lccc}
\toprule
Model & Constraint & Feasible range & Largest feasible $x$\\
\midrule
RO   & $\max_{a\in \mathcal{A}} ax \le 1$                                       & $x\le\frac13$        & $\tfrac13$\\
DRO  & $\max_{P\in\mathcal P_W}\mathbb E_P[a]\,x\le1$                    & $x\le\frac1{2.5}$    & $\tfrac1{2.5}$\\
AS-DRO & $\inf_{P\in\mathcal P_W}\mathbb E_P[\mathbf1_{\{ax\le1\}}]\ge1_-$ & $x\le\frac1{2.8}$    & $\tfrac1{2.8}$\\
\bottomrule
\end{tabular}
\caption{$\mathcal{A}=[1,3]$, $x\in\mathbb R_+$, and Wasserstein ambiguity set $\mathcal{P}_W=\Bigl\{P\text{ a Borel probability measure on }[1,3] :\; \mathcal{W}_1(P,\delta_{2})\le 0.5,\; P\bigl([2.8,3]\bigr)=0\Bigr\},$ with ground cost $|a-\hat a|$ and Dirac measure $\delta_{2}$ at $2$.}
\label{tab:continuous_example}
\end{table}

Tables~\ref{tab:discrete_example} and~\ref{tab:continuous_example} illustrate Theorem~\ref{RO_DRO_Inclusion} and clarify when the inclusions become strict. In both examples, the RO feasible region is contained in that of AS-DRO, which is contained in that of standard DRO. Specifically, the Wasserstein example demonstrates that both inclusions can be strict when the ambiguity set allows probability mass to concentrate on a subset of scenarios. These cases highlight that the position of AS-DRO between RO and standard DRO is structural, but the strictness of the inclusions depends on the ambiguity set and its induced support.

\subsection{Solution Scheme}

Having clarified the modeling role of AS-DRO and its relationship with RO and standard DRO, we now turn to its solution methodology. The starting point of our approach is the close structural proximity between AS-DRO and standard DRO, which suggests that solution schemes developed for standard DRO in Section \ref{sec:DRO} can be extended to the AS-DRO setting with suitable adaptations.

To make this connection explicit, we first show that AS-DRO admits a worst-case-expectation formulation with the same structure as standard DRO. In particular, the almost-sure feasibility requirement can be enforced by reformulating the constraint as a worst-case expectation over a nonnegative violation measure. Specifically, for each constraint \(m\in[M]\), AS-DRO enforces zero worst-case violation probability, \(\sup_{\mathbb P_m\in\mathcal P_m}\mathbb P_m[f_m(\bm a_m,\bm x)>b_m]\le0\). Defining the nonnegative violation slack \(t_m(\bm a_m,\bm x):=[f_m(\bm a_m,\bm x)-b_m]_+=\max\{f_m(\bm a_m,\bm x)-b_m,0\}\), AS-DRO reduces to
\begin{subequations}\label{eq:ASDRO-sup}
\begin{align}
    \textbf{(AS-DRO) }\min_{\bm x\in\mathcal X}\quad &\bm c^\top\bm x,\\
    \text{s.t.}\quad
        &\sup_{\mathbb P_m\in\mathcal P_m}
            \mathbb E_{\mathbb P_m}[t_m]\le0,
          \quad \forall m\in[M].
\end{align}
\end{subequations}

Because \(t_m\ge0\), its expectation is zero under every \(\mathbb P_m\in\mathcal P_m\) if and only if the violation event is \(\mathbb P_m\)-null under every such distribution. Moreover, \(t_m\le |f_m|+|b_m|\), so its integrability follows from the standing integrability condition on \(f_m\). Since the only departure from standard DRO is the \([\cdot]_+\) operator, AS-DRO can be addressed using both the dual-perspective reformulation and the primal BiCS framework developed in Section~\ref{sec:DRO}, with only minor adjustments. The following sections present these two solution perspectives in turn.

\subsubsection{Reformulation Approach - Dual Perspective}\label{sec:ASDRO-dual}

We begin by examining AS-DRO from a dual perspective. The positive part satisfies \([z]_+=\max\{z,0\}\), so its semi-infinite epigraph condition separates exactly into two maximizations over the full sample space. Under the same probability-measure and sample-space duality conditions as in Theorem~\ref{linear-dro}, both maximizations admit conic representations, yielding the following robust counterpart.

\begin{proposition} \label{AS-dro-dual}
    Suppose that the conic Slater and strict moment feasibility conditions of Theorem~\ref{linear-dro} hold. 
    Let \(f_m(\bm{a}_m, \bm{x}) = \bm{a}_m^\top \bm{x}\), 
    \(g_t(\bm{a}_m) = \bm e_{mt}^\top\bm{a}_m\), 
    and 
    \(\mathcal{A}_m = \{\bm{a}_m \in \mathbb{R}^n : \bm{D}_m\,\bm{a}_m \preceq_{\mathcal{K}_m} \bm{d}_m\}\),
    where \(\mathcal{K}_m\) is a proper cone.  
    Then, the robust counterpart of \eqref{eq:ASDRO-sup} can be formulated as:
    \begin{subequations}
    \begin{align}
        \min_{\bm{x} \in \mathcal{X}}
          & \quad \bm{c}^\top \bm{x}, \\[4pt]
        \text{s.t.}\quad 
          & \bm{\beta}_m^\top \bm{\gamma}_m + \alpha_m \;\le\; 0,
            \quad \forall\,m \in [M], \\[4pt]
          & \alpha_m \;\ge\; \bm{p}_{m1}^\top \bm{d}_m-b_m,
            \quad \forall\,m \in [M], \\[4pt]
          & \bm D_m^\top\bm{p}_{m1}
               \;=\; \bm{x} \;-\; \bm{E}_m^\top \bm{\beta}_m,
            \quad \forall\,m \in [M], \\[4pt]
          & \alpha_m \;\ge\; \bm{p}_{m2}^\top \bm{d}_m,
            \quad \forall\,m \in [M], \\[4pt]
          & \bm D_m^\top\bm{p}_{m2}
               \;=\; -\,\bm{E}_m^\top \bm{\beta}_m,
            \quad \forall\,m \in [M], \\[4pt]
          & \alpha_m\in\mathbb R,\quad
            \bm{p}_{m1},\;\bm{p}_{m2} \in \mathcal{K}_m^*,\quad
            \bm{\beta}_m \in \mathbb{R}_+^{T_m},
            \quad \forall\,m \in [M],
    \end{align}
    \end{subequations}
    where \(\mathcal{K}_m^*\) denotes the cone dual to \(\mathcal{K}_m\).
\end{proposition}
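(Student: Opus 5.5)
The plan is to follow the same bi-dual route as Theorem~\ref{linear-dro}, applied row by row to the AS-DRO constraint in \eqref{eq:ASDRO-sup}:
\[
\sup_{\mathbb P_m\in\mathcal P_m}\mathbb E_{\mathbb P_m}\bigl[\max\{\bm a_m^\top\bm x-b_m,0\}\bigr]\le 0 .
\]
The argument has three steps.

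\textbf{Step 1: dualize the probability measure.} Fix $m$ and $\bm x$. The inner problem is a generalized moment problem over measures on $\mathcal A_m$. It has the normalization constraint, with multiplier $\alpha_m\in\mathbb R$, and the linear moment inequalities $\mathbb E[\bm e_{mt}^\top\bm a_m]\le\gamma_{mt}$, with multipliers $\bm\beta_m\ge\bm 0$. The strict feasibility assumption (ii) of Theorem~\ref{linear-dro} gives strong duality for this moment problem, exactly as in that theorem's proof. The worst-case expectation therefore equals
\[
\inf\Bigl\{\alpha_m+\bm\beta_m^\top\bm\gamma_m:\ \alpha_m+\bm\beta_m^\top\bm E_m\bm a_m\ge [\bm a_m^\top\bm x-b_m]_+\ \ \forall\bm a_m\in\mathcal A_m,\ \bm\beta_m\ge 0\Bigr\}.
\]
The integrand $[\cdot]_+$ is a piecewise-affine convex function, which is within the scope of the Wiesemann et al.\ result cited for Theorem~\ref{linear-dro}. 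The constraint of \eqref{eq:ASDRO-sup} then becomes: there exist $(\alpha_m,\bm\beta_m)$ with $\alpha_m+\bm\beta_m^\top\bm\gamma_m\le 0$ satisfying the semi-infinite constraint. One point needs care: the infimum must be attained, or else an infimum of $0$ could fail to be witnessed by any feasible pair. I would justify attainment by the same strict-feasibility argument, since it yields a dual optimal solution when the primal value is finite. The primal value is finite here because $t_m$ is integrable.

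\textbf{Step 2: split the positive part.} Since $[z]_+=\max\{z,0\}$, the semi-infinite constraint is equivalent to the conjunction of two constraints:
\[
\sup_{\bm a_m\in\mathcal A_m}\bigl(\bm x-\bm E_m^\top\bm\beta_m\bigr)^\top\bm a_m\le\alpha_m+b_m,
\qquad
\sup_{\bm a_m\in\mathcal A_m}\bigl(-\bm E_m^\top\bm\beta_m\bigr)^\top\bm a_m\le\alpha_m .
\]

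\textbf{Step 3: conic duality over the sample space.} Each supremum is a conic LP over $\{\bm D_m\bm a_m\preceq_{\mathcal K_m}\bm d_m\}$. Invoke the scenario duality of Theorem~\ref{linear-robust}, which relies on the conic Slater condition. The first supremum is $\le\alpha_m+b_m$ if and only if there exists $\bm p_{m1}\in\mathcal K_m^*$ with
\[
\bm D_m^\top\bm p_{m1}=\bm x-\bm E_m^\top\bm\beta_m \quad\text{and}\quad \bm p_{m1}^\top\bm d_m\le\alpha_m+b_m .
\]
Likewise, the second holds if and only if there exists $\bm p_{m2}\in\mathcal K_m^*$ with
\[
\bm D_m^\top\bm p_{m2}=-\bm E_m^\top\bm\beta_m \quad\text{and}\quad \bm p_{m2}^\top\bm d_m\le\alpha_m .
\]
Slater guarantees that the dual is attained whenever the supremum is finite. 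If a supremum is $+\infty$, then both the original constraint and the dual system fail, so the equivalence still holds. Substituting these conditions for all $m$ gives the stated system, with $\alpha_m$ free.

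The main obstacle is Step~1. The moment-problem strong duality there must hold with attainment, and must hold for the particular threshold value $0$. This is a boundary case: feasibility requires the worst-case violation expectation to be exactly zero, so a non-attained dual infimum would break the ``there exist'' form. I would handle this by appealing directly to the strict-feasibility clause already used in Theorem~\ref{linear-dro}. That clause ensures dual solvability when the primal value is finite, and the primal value is bounded below by $0$ and finite by integrability. Steps~2 and 3 are routine repetitions of Theorem~\ref{linear-robust}.
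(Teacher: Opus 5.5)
Your proposal is correct and follows the same route as the paper. You dualize the probability measure through the strict-feasibility moment duality of Theorem~\ref{linear-dro}, split the semi-infinite constraint via $[z]_+=\max\{z,0\}$ into two full-space suprema, and certify each supremum by conic duality to obtain $\bm p_{m1}$ and $\bm p_{m2}$; your explicit handling of dual attainment at the threshold $0$ is a point the paper leaves implicit in its appeal to Theorem~\ref{linear-dro}.

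One small correction: integrability of $t_m$ under each distribution does not by itself bound the supremum over $\mathcal P_m$. You only need attainment when the row is feasible, and there the value is exactly $0$, since $0\le\sup_{\mathbb P_m\in\mathcal P_m}\mathbb E_{\mathbb P_m}[t_m]\le 0$.
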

\begin{proof}
The probability-measure dual used in Theorem~\ref{linear-dro} gives the semi-infinite inequality
\begin{equation*}
  \alpha_m \;\ge\;
    \bigl(f_m(\bm{a}_m,\bm{x}) - b_m\bigr)_+
     \;-\;\!\sum_{t \in [T_m]} \beta_{mt}\,g_{t}(\bm{a}_m),
     \qquad \forall\bm a_m\in\mathcal A_m,
\end{equation*}
and the constraint \(\bm\beta_m^\top\bm\gamma_m+\alpha_m\le0\), with \(\bm\beta_m\ge0\). Since \([z]_+=\max\{z,0\}\), the semi-infinite inequality is equivalent to
\begin{subequations}
\begin{align}
  \alpha_m 
   & \;\ge\;
   \sup_{\bm{a}_m \in \mathcal{A}_m}
   \Bigl\{\,\bm a_m^\top\bm x-b_m
           \;-\;\sum_{t\in[T_m]}\!\beta_{mt}\,g_t(\bm{a}_m)\Bigr\}, 
   \label{AS-DRO-violate}\\
  \alpha_m 
   & \;\ge\;
   \sup_{\bm{a}_m \in \mathcal{A}_m}
   \Bigl\{-\,\sum_{t\in[T_m]}\!\beta_{mt}\,g_t(\bm{a}_m)\Bigr\}.
   \label{AS-DRO-satisfied}
\end{align}
\end{subequations}
Under the sample-space duality conditions of Theorem~\ref{linear-dro}, the two full-space suprema have dual certificates \(\bm p_{m1}\) and \(\bm p_{m2}\), respectively, with \(\bm D_m^\top\bm p_{m1}=\bm x-\bm E_m^\top\bm\beta_m\) and \(\bm D_m^\top\bm p_{m2}=-\bm E_m^\top\bm\beta_m\). Substitution gives the displayed formulation.
\end{proof}

Overall, the dual reformulation in Proposition~\ref{AS-dro-dual} handles the non-smooth operator $[\,f_m(\bm a_m,\bm x)-b_m]_+$ exactly through the two full-space inequalities \eqref{AS-DRO-violate} and \eqref{AS-DRO-satisfied}, with separate dual certificates linked through \(\alpha_m\). Although this reformulation yields a compact conic representation, the additional dual certificates may limit scalability when solving large-scale instances with commercial solvers. Motivated by the similar worst-case structure of AS-DRO and standard DRO, we next consider AS-DRO from a primal perspective that avoids explicit dualization. Building on solution ideas developed for standard DRO (Section~\ref{BiCS section}), the preserved worst-case structure of AS-DRO can be naturally adapted within the BiCS framework.

\subsubsection{BiCS Framework — Primal Perspective}\label{sec:ASDRO-BiCS}

From a primal perspective, our key observation is that AS-DRO and standard DRO share the same underlying worst-case protection structure, differing only in the form of the protection criterion: standard DRO guards against the worst-case expectation, whereas AS-DRO guards against the essential supremum. If we view the worst-case protection mechanism as a single component, then AS-DRO and standard DRO share the same high-level optimization structure. This observation leads us to approach AS-DRO through an iterative cut-generation perspective, in direct analogy with standard DRO. Rather than reformulating the worst-case protection via dual variables, we treat the worst-case distribution as an object that can be revealed incrementally and incorporated through primal cuts. Clearly, this cut-generation operation can be naturally handled within the BiCS framework (Section~\ref{BiCS section}), leading to a decomposition into a master problem and a subproblem. We next outline how the corresponding master problem and subproblem are adapted to accommodate the AS-DRO setting.

\paragraph{Solving Master Problem}  

We begin by describing how the MP in the BiCS framework is adapted to accommodate the AS-DRO setting. As in standard DRO, probability mass is assigned to scenarios in the current finite collection; AS-DRO changes only the quantity attached to each scenario, replacing \(f_m\) by its nonnegative violation slack. Under this representation, the MP can be formulated as follows.

\begin{subequations}\label{eq:ASDRO-master-refined}
\begin{align}
  \textbf{MP-ASDRO:} \hspace{2mm}\min_{\bm x \in\mathcal X,\;t_{mk}}\quad &\bm c^\top\bm x\\
  \text{s.t.}\quad
    &\sup_{{\mathbb P_{m} \in \mathcal{P}_m(\hat A_m)}} \sum_{k=0}^{K_m} P_{mk}\,t_{mk}\;\le\;0, \quad \forall m\in[M],\\
    &t_{mk}=\bigl[f_m(\bm a_{mk},\bm x)-b_m\bigr]_+,
      \quad k=0,\ldots,K_m,\;m\in[M].
\end{align}
\end{subequations}

As in MP-DRO, $\mathcal P_m(\hat A_m)$ is kept nonempty. If valid finite bounds are available, the positive-part relation can be represented by a standard Big-\(M\) construction.

Importantly, this change does not alter the solution approach for the embedded probability-assignment problem. Given a finite scenario collection \(\{\bm a_{mk}\}_{k=0,\ldots,K_m,\;m\in[M]}\), the violation term $t_{mk} = [\,f_m(\bm a_{mk}, \bm x) - b_m\,]_+ $ depends only on \(\bm x\). The maximization over probability weights therefore remains linear and admits the same LP-duality or KKT representations described in Section~\ref{BiCS Master Problem}.

\paragraph{Solving Subproblem}  

Given a candidate solution \(\bm x^*\) from the master problem, the role of the subproblem is to determine whether there exists a distribution in the ambiguity set under which the AS-DRO constraint is violated. This step reveals the key conceptual distinction in risk treatment between AS-DRO and standard DRO. In standard DRO, the subproblem searches over admissible distributions to maximize the expected constraint value. In contrast, AS-DRO imposes worst-case protection at the level of constraint violations, characterized by an essential-supremum criterion. 

Once \(\bm x^*\) is fixed, however, whether a realization induces a violation is fully determined by the scenario itself. Consequently, the subproblem can still operate over scenarios, but with a modified objective: identifying those scenarios that trigger violations under \(\bm x^*\). If no violating distribution is returned under the applicable oracle guarantee, no scenario is added. Otherwise, the returned distribution supplies scenarios for a distribution cut.

Specifically, each subproblem seeks a distribution in \(\mathcal P_m\) that maximizes the expected violation slack under a fixed candidate solution \(\bm x^*\). Under the corresponding conditions of Standard Oracle~1 in Section~\ref{BiCS SP}, its finite-support formulation becomes
\begin{equation}\label{SP-bilinear-ASDRO-all}
\textbf{SP-O1-ASDRO:}\quad
\Omega_{m,\mathrm{AS}}^*(\bm{x}^*)=
\max\left\{\sum_{i=1}^{T_m+1}\tilde P_i
\bigl[f_m(\tilde{\bm a}_i,\bm x^*)-b_m\bigr]_+
\;\middle|\;\eqref{SP-bilinear-start}\text{--}\eqref{SP-bilinear-end}\right\}.
\end{equation}

Alternatively, \emph{Oracle 2} applies the Standard DRO PMP/PSP construction with \(h_m(\bm a_m,\bm x):=[f_m(\bm a_m,\bm x)-b_m]_+\) in place of \(f_m\); the PMP and PSP formulations are summarized in Table~\ref{tab:RO_ASDRO_DRO_comparison}. The exact AS-DRO target is zero: a fixed positive tolerance certifies only the corresponding expected-slack bound, whereas a zero-tolerance run is exact only if it terminates.

\vspace{2mm}

In summary, we defined AS-DRO and provided its solution schemes in this section. We compared its level of conservativeness against pointwise RO and expectation‑based DRO, showing that AS-DRO provides intermediate protection as the ambiguity set’s support can differ from the sample space. We then embedded AS-DRO into the standard DRO solution framework, showing that it fits naturally within the BiCS primal decomposition with only minor adjustments, including when the resulting AS-DRO formulation is nonconvex. This observation broadens our understanding of the primal perspective and highlights its ability to handle more challenging problem classes. In the next section, we extend this primal approach to nonconvex and discontinuous settings, showing that the same primal-based BiCS framework continues to apply with only modest additional modifications.

\begin{landscape}
\begin{table}[htbp]
\centering
\footnotesize  
\renewcommand{\arraystretch}{1.2}      
\setlength{\tabcolsep}{3pt}           
\caption{Comparison of Robust Optimization (RO), Almost-Sure DRO (AS-DRO), and Standard DRO.}
\label{tab:RO_ASDRO_DRO_comparison}

\begin{tabular}{|>{\centering\arraybackslash}m{1.2cm}|
                >{\centering\arraybackslash}m{5.6cm}|
                >{\centering\arraybackslash}m{7.8cm}|
                >{\centering\arraybackslash}m{7.3cm}|}
\hline
\textbf{} & \textbf{RO} & \textbf{AS-DRO} & \textbf{Standard DRO} \\
\hline

\textbf{Primal}
& \parbox{5.6cm}{
\[
\begin{aligned}[t]
\min_{\bm{x}\in \mathcal{X}} \quad & \bm{c}^\top \bm{x},\\
\text{s.t.}\quad &
 \sup_{\bm{a}_m \in \mathcal{A}_m} f_m(\bm{a}_m,\bm{x}) \,\le b_m
\end{aligned}
\]
}
& \parbox{7.8cm}{
\[
\begin{aligned}[t]
\min_{\bm{x}\in \mathcal{X}} \quad & \bm{c}^\top \bm{x},\\
\text{s.t.}\quad &
 \sup_{\mathbb{P}_m\in \mathcal{P}_m} 
 \mathbb{E}_{\mathbb{P}_m}[\,t_m\,]\le 0,\\
& t_m = \max\{\,f_m(\bm{a}_m,\bm{x})-b_m,\;0\}
\end{aligned}
\]
}
& \parbox{7.3cm}{
\[
\begin{aligned}[t]
\min_{\bm{x}\in \mathcal{X}} \quad & \bm{c}^\top \bm{x},\\
\text{s.t.}\quad &
 \sup_{\mathbb{P}_m\in \mathcal{P}_m} 
 \mathbb{E}_{\mathbb{P}_m}[f_m(\bm{a}_m,\bm{x})] 
 \,\le b_m
\end{aligned}
\]
}
\\ \hline

\textbf{Dual}
& \parbox{5.6cm}{
\[
\begin{aligned}[t]
\min_{\bm{x}\in \mathcal{X}} \quad & \bm{c}^\top \bm{x},\\
\text{s.t.}\quad &
 \bm{p}_m^\top \bm{d}_m \,\le b_m,\\
& \bm{D}_m^\top \bm{p}_m = \bm{x},\\
& \bm{p}_m \in \mathcal{K}_m^*
\end{aligned}
\]
}
& \parbox{7.8cm}{
\[
\begin{aligned}[t]
\min_{\bm{x}\in \mathcal{X}} \quad & \bm{c}^\top \bm{x},\\
\text{s.t.}\quad &
 \bm{\beta}_m^\top \bm{\gamma}_m + \alpha_m \,\le 0,\\
& \alpha_m \,\ge \bm{p}_{m1}^\top \bm{d}_m-b_m,\\
& \bm D_m^\top \bm{p}_{m1}
    = \bm{x} - \bm{E}_m^\top \bm{\beta}_m,\\
& \alpha_m \,\ge \bm{p}_{m2}^\top \bm{d}_m,\\
& \bm D_m^\top \bm{p}_{m2}
    = -\,\bm{E}_m^\top \bm{\beta}_m,\\
& \bm{p}_{m1},\bm{p}_{m2} \in \mathcal{K}_m^*,\ 
  \bm{\beta}_m\in \mathbb{R}_+^{T_m},\ 
  \alpha_m\in\mathbb R
\end{aligned}
\]
}
& \parbox{7.3cm}{
\[
\begin{aligned}[t]
\min_{\bm{x}\in \mathcal{X}} \quad & \bm{c}^\top \bm{x},\\
\text{s.t.}\quad &
 \bm{\beta}_m^\top \bm{\gamma}_m + \bm{p}_m^\top \bm{d}_m \,\le b_m,\\
& \bm{D}_m^\top \bm{p}_m 
  = \bm{x} - \bm{E}_m^\top \bm{\beta}_m,\\
& \bm{p}_m\in \mathcal{K}_m^*,\ 
  \bm{\beta}_m\in \mathbb{R}_+^{T_m}
\end{aligned}
\]
}
\\ \hline

\textbf{MP}
& \parbox{5.6cm}{
\[
\begin{aligned}[t]
\min_{\bm{x}\in \mathcal{X}} \quad & \bm{c}^\top \bm{x},\\
\text{s.t.}\quad &
 f_m(\bm{a}_{mk},\bm{x}) \le b_m,  \bm{a}_{mk}\in \hat A_m
\end{aligned}
\]
}
& \parbox{7.8cm}{
\[
\begin{aligned}[t]
\min_{\bm{x}\in \mathcal{X}} \quad & \bm{c}^\top \bm{x},\\
\text{s.t.}\quad &
 \sup_{\mathbb P_m\in \mathcal{P}_m(\hat A_m)}
 \sum_{k=0}^{K_m} P_{mk}\,t_{mk}\,\le 0,\\
& t_{mk} = \max\{\,f_m(\bm{a}_{mk},\bm{x}) - b_m,\;0\},\quad k=0,\ldots,K_m
\end{aligned}
\]
}
& \parbox{7.3cm}{
\[
\begin{aligned}[t]
\min_{\bm{x}\in \mathcal{X}} \quad & \bm{c}^\top \bm{x},\\
\text{s.t.}\quad &
 \sup_{\mathbb P_m\in \mathcal{P}_m(\hat A_m)}
 \sum_{k=0}^{K_m} P_{mk}\,f_m(\bm{a}_{mk},\bm{x})
 \,\le b_m, \\
 & \bm{a}_{mk}\in \hat A_m
\end{aligned}
\]
}
\\ \hline

\textbf{SP}
& \parbox{5.6cm}{
\[
\begin{aligned}[t]
\sup \quad & f_m(\bm{\tilde{a}}_m,\bm{x}^*),\\
\text{s.t.}\quad & \bm{\tilde{a}}_m\in \mathcal{A}_m
\end{aligned}
\]
}
& \parbox{7.8cm}{
\[
\begin{aligned}[t]
\sup_{\mathbb{P}_m\in \mathcal{P}_m} \quad 
& \mathbb{E}_{\mathbb{P}_m}[\tilde{t}_m],\\
\text{s.t.}\quad &
 \tilde{t}_m = \max\{\,f_m(\bm{\tilde{a}}_m,\bm{x}^*)-b_m,\;0\},\\
& \bm{\tilde{a}}_m\in \mathcal{A}_m
\end{aligned}
\]
}
& \parbox{7.3cm}{
\[
\begin{aligned}[t]
\sup_{\mathbb{P}_m\in \mathcal{P}_m} \quad 
& \mathbb{E}_{\mathbb{P}_m}[\,f_m(\bm{\tilde{a}}_m,\bm{x}^*)],\\
\text{s.t.}\quad & \bm{\tilde{a}}_m\in \mathcal{A}_m
\end{aligned}
\]
}
\\ \hline

\textbf{PMP}
& Not applicable
& \parbox{7.8cm}{
\[
\max_{\mathbb P_m\in \mathcal{P}_m(\hat A_m)}\ 
 \sum_{k=0}^{K_m} P_{mk}\,t_{mk} (\bm x^*)
\]
}
& \parbox{7.3cm}{
\[
\max_{\mathbb P_m\in \mathcal{P}_m(\hat A_m)}\ 
 \sum_{k=0}^{K_m} P_{mk}\,f_m(\bm{a}_{mk},\bm{x}^*)
\]
}
\\ \hline

\textbf{PSP}
& Not applicable
& \parbox{7.8cm}{
\[
\begin{aligned}[t]
\sup \quad & \tilde{t}_m \,-\,\alpha_m^* 
    \,-\,\sum_{t\in[T_m]}\beta_{mt}^*\,g_t(\tilde{\bm{a}}_m),\\
\text{s.t.}\quad &
 \tilde{t}_m = \max\{\,f_m(\tilde{\bm{a}}_m,\bm{x}^*) - b_m,\;0\},\\
& \tilde{\bm{a}}_m \in \mathcal{A}_m
\end{aligned}
\]
}
& \parbox{7.3cm}{
\[
\begin{aligned}[t]
\sup \quad & f_m(\tilde{\bm{a}}_m,\bm{x}^*) \,-\,\alpha_m^*
    \,-\,\sum_{t\in[T_m]}\beta_{mt}^*\,g_t(\tilde{\bm{a}}_m),\\
\text{s.t.}\quad &
 \tilde{\bm{a}}_m \in \mathcal{A}_m
\end{aligned}
\]
}
\\ \hline

\end{tabular}

\vspace{0.5ex}
\begin{minipage}{\linewidth}
\footnotesize
\textbf{Note:} 
All constraints involving \(m\) are implicitly assumed to hold 
\(\forall\,m \in [M]\), unless otherwise stated; finite-scenario rows use \(k=0,\ldots,K_m\). The dual rows are stated for the linear-conic moment setting of Section~\ref{sec:DRO}. MP, SP, PMP, and PSP denote the master problem, subproblem, pricing master problem, and pricing subproblem, respectively.
\end{minipage}
\end{table}
\end{landscape}

\section{Extensions} \label{sec:extensions}

Building on the BiCS framework for formulations that may be nonconvex, including AS-DRO, we next examine how the same algorithmic principles extend to broader settings. In this section, we focus on two representative and practically important generalizations: (i) DRCCP and (ii) Standard DRO models incorporating local information. In both cases, the ambiguity set remains convex in the probability measure; the main difficulties instead arise from indicators of chance events or regional membership and the resulting formulations over the sample space or decision variables, which may be nonconvex or discontinuous. We show that these models nevertheless admit a natural primal decomposition aligned with the BiCS paradigm.

\subsection{Distributionally Robust Chance Constrained Programs} \label{sec:DRCCP}

We begin with DRCCP, a prototypical extension in which nonconvexity arises from probabilistic feasibility requirements under distributional ambiguity. In terms of risk treatment, DRCCP constitutes a natural progression beyond AS-DRO. Both settings apply a worst-case risk measure to an ambiguity set of distributions; however, they differ fundamentally in their treatment of constraint risk. AS-DRO imposes almost-sure feasibility through the essential supremum, whereas DRCCP uses a quantile to impose a high probability requirement. This shift provides a more flexible reliability requirement while introducing nonconvexity as an algorithmic challenge. As such, solving DRCCPs is important both from a practical modeling perspective and from an algorithmic standpoint.

Formally, for each group $m\in[M]$, let $I_m\in\mathbb N$ be the number of component constraints, let $h_m\in[I_m]$ be the minimum number that must be satisfied, and let $\epsilon_m\in[0,1)$. The vector $\bm a_m$ denotes the joint uncertainty for group $m$, and component $i\in[I_m]$ is described by $f_{mi}:\mathcal A_m\times\mathcal X\to\mathbb R$ and $b_{mi}\in\mathbb R$. We assume that $\mathcal P_m$ is nonempty and that $\{\bm a_m\in\mathcal A_m:f_{mi}(\bm a_m,\bm x)\le b_{mi}\}$ is measurable for every $\bm x\in\mathcal X$ and $i\in[I_m]$. The general DRCCP is
\begin{subequations}\label{chanceDRO}
\begin{align}
    \min \quad & \bm{c}^\top \bm{x}, \\
    \text{s.t.} \quad & \bm{x} \in \mathcal{X},\\
    & \inf_{\mathbb P_m \in \mathcal{P}_m} \,\mathbb{P}_m\Biggl\{
        \left|\left\{i\in[I_m]:
        f_{mi}(\bm a_m,\bm x)\le b_{mi}\right\}\right|
        \ge h_m
      \Biggr\}
      \;\ge\; 1 - \epsilon_m,
      \;\;\forall\, m \in [M].
\end{align}
\end{subequations}

The formulation \eqref{chanceDRO} encompasses several well-known models in DRCCP. When each ambiguity set $\mathcal{P}_m$ reduces to a singleton, \eqref{chanceDRO} reduces to the CCP \cite{charnes1959chance,nemirovski2007convex}. If each group contains a single constraint ($I_m = h_m = 1$), we recover the standard \emph{individual DRCCP}; if each group contains multiple constraints that must all hold simultaneously ($I_m = h_m \ge 2$), we obtain the classical \emph{joint DRCCP}. These two settings are widely studied, particularly under moment-based or Wasserstein ambiguity sets \cite{xie2018deterministic,zymler2013distributionally,xie2021distributionally}.

Furthermore, models involving \emph{multiple} DRCCPs ($M \ge 2$) have received less explicit theoretical attention, despite their relevance in applications such as multiperiod power systems and cancer treatment planning \cite{zaghian2018chance,wen2024multiple,huang2025distributionally}. DRCCP techniques predominantly focus on a single worst-case probability constraint because it facilitates tight reformulations and tractable characterizations. From a primal modeling perspective, however, aggregation is not intrinsic. Different stages, subsystems, or clinical objectives naturally induce distinct chance constraints, each with its reliability target $\epsilon_m$ and distributional information $\mathcal{P}_m$, as captured by formulation \eqref{chanceDRO}.

More generally, when $1 \le h_m < I_m$, we refer to the groupwise cardinality structure as a \emph{combinatorial DRCCP}, in which at least $h_m$ out of $I_m$ constraints must be satisfied. Because satisfaction may arise from many different subsets of component constraints, this structure complicates exact reformulation. A primal representation based on the satisfaction event keeps this event explicit and enables its direct integration into the BiCS framework under the conditions stated below. Table~\ref{Chance Cate} summarizes the DRCCP categories encompassed by our general formulation \eqref{chanceDRO}. We next discuss how these models can be addressed within the BiCS framework.

\begin{table}[ht!]
    \centering
    \caption{Classification of DRCCP Formulations}
    \label{Chance Cate}
    \begin{threeparttable}
    \small
    \renewcommand{\arraystretch}{1.12}
    \begin{tabular}{@{}p{0.28\textwidth}p{0.23\textwidth}p{0.40\textwidth}@{}}
    \toprule
    DRCCP category & Condition & Interpretation \\
    \midrule
    \multicolumn{3}{@{}l}{\emph{Number of chance constraints}} \\
    \hspace{1em}Single DRCCP & $M=1$ & One probabilistic requirement \\
    \hspace{1em}Multiple DRCCPs & $M\ge 2$ & Several reliability requirements \\
    \midrule
    \multicolumn{3}{@{}l}{\emph{Satisfaction structure within group $m$}} \\
    \hspace{1em}Individual DRCCP & $I_m=h_m=1$ & One inequality in each group \\
    \hspace{1em}Joint DRCCP & $I_m=h_m\ge 2$ & All inequalities in a group must hold \\
    \hspace{1em}Combinatorial DRCCP & $1\le h_m<I_m$ & At least $h_m$ of $I_m$ inequalities must hold \\
    \midrule
    \multicolumn{3}{@{}l}{\emph{Distributional information}} \\
    \hspace{1em}Classical CCP & $\mathcal P_m=\{\mathbb P_m^0\}$ for all $m$ & Known distribution \\
    \hspace{1em}DRCCP & $|\mathcal P_m|>1$ for some $m$ & Ambiguity over distributions \\
    \bottomrule
    \end{tabular}
    \begin{tablenotes}\footnotesize
    \item \textbf{Note:} $M$ is the number of chance constraints, $I_m$ is the number of inequalities in group $m$, and $h_m$ is the minimum number of inequalities that must be satisfied in that group. The distribution $\mathbb P_m^0$ denotes a known probability distribution.
    \end{tablenotes}
    \end{threeparttable}
\end{table}

Before presenting the solution scheme, we distinguish the inclusion chain among RO, AS-DRO, DRCCP-J, and DRCCP-I from the separate comparison between Standard DRO and DRCCP-I, whose feasible regions are generally unordered. This comparison complements Figure~\ref{Risk Measure Comparison} in positioning the models by their risk treatment. For the following inclusion comparison, we restrict attention to the individual and joint DRCCP settings. Here, DRCCP-J denotes the cross-row joint chance requirement over the $M$ individual rows ($I_m=1$ for all $m$); it is distinct from the within-group joint DRCCP characterized by $I_m=h_m\ge2$ in Table~\ref{Chance Cate}.

For this comparison, write $f_m:=f_{m1}$ and $b_m:=b_{m1}$. ~\eqref{eq:DRCCP-J} and ~\eqref{eq:DRCCP-I} impose, respectively,

\begin{align}
\inf_{\mathbb{P}\in\mathcal{P}}
\mathbb{P}\left(
    \bigcap_{m\in[M]}
    \left\{f_m(\boldsymbol{a}_m,\boldsymbol{x})\le b_m\right\}
\right)
&\ge 1-\epsilon,
\tag{DRCCP-J}\label{eq:DRCCP-J}
\\
\inf_{\mathbb{P}_m\in\mathcal{P}_m}
\mathbb{P}_m
\left\{f_m(\boldsymbol{a}_m,\boldsymbol{x})\le b_m\right\}
&\ge 1-\epsilon,
\qquad \forall m\in[M].
\tag{DRCCP-I}\label{eq:DRCCP-I}
\end{align}

\begin{theorem} \label{thm:RO_DRCCP_Inclusion}
Let $\mathcal X_{\mathrm{RO}}$, $\mathcal X_{\text{AS-DRO}}$, 
$\mathcal X_{\mathrm{DRCCP\text{-}J}}$, and $\mathcal X_{\mathrm{DRCCP\text{-}I}}$
denote the feasible regions of RO, AS-DRO, joint DRCCP, and individual DRCCP, respectively.
Assume $0<\epsilon<1$ and $\epsilon_m=\epsilon$ for all $m\in[M]$.
Let the nonempty ambiguity sets $\{\mathcal P_m\}_{m=1}^M$ and $\mathcal P$ satisfy
Definition~\ref{def:Marginal-ambiguity}.
Then the following inclusions hold:
\[
\mathcal X_{\mathrm{RO}}
\;\subseteq\;
\mathcal X_{\text{AS-DRO}}
\;\subseteq\;
\mathcal X_{\mathrm{DRCCP\text{-}J}}
\;\subseteq\;
\mathcal X_{\mathrm{DRCCP\text{-}I}}.
\]
\end{theorem}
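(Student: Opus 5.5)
The plan is to prove the three inclusions one at a time, with the whole argument resting on the relation between the joint set $\mathcal P$ and its marginals from Definition~\ref{def:Marginal-ambiguity}. The first inclusion $\mathcal X_{\mathrm{RO}}\subseteq\mathcal X_{\text{AS-DRO}}$ is exactly the first part of Theorem~\ref{RO_DRO_Inclusion}, which I will invoke directly. Its proof did not use any expectation structure, only pointwise feasibility on $\mathcal A_m$.

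For $\mathcal X_{\text{AS-DRO}}\subseteq\mathcal X_{\mathrm{DRCCP\text{-}J}}$, take $\bm x$ feasible for AS-DRO and write $E_m:=\{\bm a_m\in\mathcal A_m: f_m(\bm a_m,\bm x)\le b_m\}$. Then $\mathbb P_m(E_m^c)=0$ for every $\mathbb P_m\in\mathcal P_m$. Now fix any $\mathbb P\in\mathcal P$. Its marginal $\mathbb P^m$ lies in $\mathcal P_m$, so the cylinder set $\tilde E_m:=\mathcal A_1\times\cdots\times E_m\times\cdots\times\mathcal A_M$ satisfies $\mathbb P(\tilde E_m^c)=\mathbb P^m(E_m^c)=0$. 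By the union bound, $\mathbb P\bigl(\bigcap_m\tilde E_m\bigr)\ge 1-\sum_m\mathbb P(\tilde E_m^c)=1\ge 1-\epsilon$. Taking the infimum over $\mathcal P$ gives \eqref{eq:DRCCP-J}. If $\mathcal P$ is empty, the infimum is $+\infty$ and the inclusion holds trivially. Measurability of each $E_m$ is assumed in the DRCCP setup, and the cylinder sets lie in the product $\sigma$-algebra $\mathcal F$.

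For $\mathcal X_{\mathrm{DRCCP\text{-}J}}\subseteq\mathcal X_{\mathrm{DRCCP\text{-}I}}$, take $\bm x$ feasible for DRCCP-J, fix $m$, and take any $\mathbb P_m\in\mathcal P_m$. The idea is to lift $\mathbb P_m$ to a joint distribution in $\mathcal P$. Since every $\mathcal P_{m'}$ is nonempty, pick some $\mathbb Q_{m'}\in\mathcal P_{m'}$ for each $m'\ne m$ and form the product measure $\mathbb P:=\mathbb Q_1\otimes\cdots\otimes\mathbb P_m\otimes\cdots\otimes\mathbb Q_M$. This is a probability measure on $(\mathcal A,\mathcal F)$ whose marginals lie in the respective $\mathcal P_{m'}$, so $\mathbb P\in\mathcal P$. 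Monotonicity then gives
\[
\mathbb P_m(E_m)=\mathbb P(\tilde E_m)\ge\mathbb P\Bigl(\bigcap_{m'}\tilde E_{m'}\Bigr)\ge 1-\epsilon .
\]
Because $\mathbb P_m$ was arbitrary, taking the infimum over $\mathcal P_m$ yields \eqref{eq:DRCCP-I}.

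The main obstacle is the lifting step in the last inclusion. It needs nonemptiness of every $\mathcal P_{m'}$ and existence of the product measure, which holds for probability measures on arbitrary measurable spaces. It also relies on $\mathcal P$ imposing no coupling restriction beyond its marginals, which is exactly what Definition~\ref{def:Marginal-ambiguity} provides. I will state explicitly that the independence coupling is used only as a witness and has no modeling meaning. The hypothesis $0<\epsilon<1$ plays no substantive role in the argument beyond making the statement meaningful.
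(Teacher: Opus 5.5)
Your proof is correct and follows essentially the same route as the paper. Step~1 is inherited from Theorem~\ref{RO_DRO_Inclusion}, and the last inclusion uses the same product-measure lifting of an arbitrary $\mathbb P_m\in\mathcal P_m$ to a member of $\mathcal P$; you need only the ``$\ge$'' direction, whereas the paper records equality of the two infima. The middle inclusion differs slightly: the paper cites a separate AS-DRO equivalence result to obtain $\mathbb P(E(\bm x))=1$ for all $\mathbb P\in\mathcal P$, while you derive this directly from the marginal condition and a union bound over the null cylinder complements, which is exactly what that citation supplies.
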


\begin{proof}
For each $m\in[M]$, define $F_m(\bm x):=\{\bm a_m\in\mathcal A_m:f_m(\bm a_m,\bm x)\le b_m\}$ and its cylinder event $E_m(\bm x):=\{\bm a\in\mathcal A:\bm a_m\in F_m(\bm x)\}$. Let $E(\bm x):=\bigcap_{m\in[M]}E_m(\bm x)$.

\smallskip\noindent
\textbf{Step 1: \(\mathcal{X}_{\mathrm{RO}} \subseteq \mathcal{X}_{\text{AS-DRO}}\).}
This is identical to Step~1 of Theorem~\ref{RO_DRO_Inclusion}.

\smallskip\noindent
\textbf{Step 2: $\mathcal X_{\text{AS-DRO}}
\subseteq \mathcal X_{\mathrm{DRCCP\text{-}J}}$.}
Let $\bm x\in\mathcal X_{\text{AS-DRO}}$. By Proposition~\ref{prop:ASDRO-equivalence}, $\mathbb P(E(\bm x))=1$ for every $\mathbb P\in\mathcal P$. Hence
\[
\inf_{\mathbb P\in\mathcal P}\mathbb P(E(\bm x))=1\ge 1-\epsilon,
\]
which implies $\bm x\in\mathcal X_{\mathrm{DRCCP\text{-}J}}$.

\smallskip\noindent
\textbf{Step 3: $\mathcal X_{\mathrm{DRCCP\text{-}J}}
\subseteq \mathcal X_{\mathrm{DRCCP\text{-}I}}$.}
Let $\bm x\in\mathcal X_{\mathrm{DRCCP\text{-}J}}$ and fix $m\in[M]$. Since $E(\bm x)\subseteq E_m(\bm x)$,
\[
\inf_{\mathbb P\in\mathcal P}\mathbb P(E_m(\bm x))
\ge \inf_{\mathbb P\in\mathcal P}\mathbb P(E(\bm x))
\ge 1-\epsilon.
\]
Every $\mathbb P\in\mathcal P$ has an $m$th marginal in $\mathcal P_m$. Conversely, for any $\mathbb Q_m\in\mathcal P_m$, choose $\mathbb Q_j\in\mathcal P_j$ for $j\ne m$ and form $\mathbb Q:=\bigotimes_{j=1}^M\mathbb Q_j\in\mathcal P$. Therefore,
\[
\inf_{\mathbb P_m\in\mathcal P_m}\mathbb P_m(F_m(\bm x))
=\inf_{\mathbb P\in\mathcal P}\mathbb P(E_m(\bm x))
\ge 1-\epsilon_m.
\]
Thus $\bm x\in\mathcal X_{\mathrm{DRCCP\text{-}I}}$.
\end{proof}

\begin{remark}[On the relationship between DRCCP-I and DRO]
In general, there is no simple inclusion relationship between the feasible regions induced by DRCCP-I and DRO. For each $m\in[M]$, the individual DRCCP constraint controls the worst-case probability of violating the $m$th constraint, but places no restriction on the magnitude of violations when they occur. By contrast, the DRO constraint
\[
    \sup_{\mathbb{P}_m \in \mathcal{P}_m}
    \mathbb{E}_{\mathbb{P}_m}\bigl[f_m(\bm a_m,\bm x)\bigr] \;\le\; b_m
\]
bounds the worst-case expected value, while allowing constraint violations to occur with strictly positive probability. In this case, there can exist decisions that are feasible under DRCCP-I, such as those with extremely rare but arbitrarily large violations, that are infeasible under DRO. On the other hand, there can also exist decisions that satisfy DRO, due to small expected values, but violate the DRCCP-I constraint because the violation probability exceeds $\epsilon_m$. Without additional assumptions that quantitatively link violation probabilities to violation magnitudes, there is no general inclusion relationship between the two models. We therefore view DRCCP-I and DRO as complementary modeling paradigms that represent different approaches to risk control.
\end{remark}

With these inclusion relationships established, we now turn to the solution of DRCCPs. These results clarify the role of DRCCPs in risk treatment but do not provide a solution method. We next present the dual reformulation and primal BiCS perspectives.

\subsubsection{Reformulation Approach: Dual Perspective}

Existing reformulation approaches for DRCCPs depend on the structures of the ambiguity set and satisfaction event. One common approximation route replaces the original chance constraint based on VaR with a CVaR surrogate. This substitution is generally not an exact reformulation. Instead, it replaces the original probability requirement based on a quantile with an average tail risk criterion, thereby enforcing violation control only indirectly and often conservatively.

Motivated by this source of inexactness, and inspired by the enumerated reformulation strategy developed for AS-DRO, we adopt a direct reformulation that explicitly models constraint satisfaction at the event level. In particular, we use the indicator function $z(\bm a,\bm x)$ of the joint satisfaction event, which equals one exactly when all component constraints are satisfied. This indicator representation preserves the intuitive logic of the original VaR model. Under the local conditions stated in Proposition~\ref{prop:drccp_dual}, it yields an equivalent finite-dimensional reformulation. The reformulation for a single DRCCP is presented in Proposition \ref{prop:drccp_dual}, with further discussion of extensions to other DRCCP settings provided in Remark \ref{DRCCP_remark}.

\begin{proposition}\label{prop:drccp_dual}
Let $M=1$ in \eqref{chanceDRO}, write $I:=[I_1]$, $h_1=I_1$, and $\epsilon:=\epsilon_1$, and let $\mathcal A:=\mathcal A_1:=\prod_{i\in I}\mathcal A_i$ carry its product Borel $\sigma$-algebra. For each $i\in I$, let $f_{1i}(\bm a,\bm x)=\bm a_i^\top \bm x$, $b_i:=b_{1i}$, and $\mathcal{A}_i=\{\bm a_i\in\mathbb{R}^n:\bm D_i\bm a_i\preceq_{\mathcal{K}_i}\bm d_i\}$, where $\mathcal{K}_i$ is a proper cone with dual cone $\mathcal{K}_i^*$. Let $T:=T_1$, $\mathcal P:=\mathcal P_1$, $\bm\gamma:=(\gamma_t)_{t\in[T]}$, and $g_t(\bm a)=\sum_{i\in I}\bm E_{it}\bm a_i$ for $t\in[T]$, where $\bm E_{it}\in\mathbb R^{1\times n}$.

For $\bm x\in\mathcal X$, define the satisfaction region $S(\bm x):=\{\bm a=(\bm a_i)_{i\in I}\in\mathcal A: \bm a_i^\top\bm x\le b_i,\ \forall i\in I\}$, and, for each $k\in I$, the strict violation region $V_k(\bm x):=\{\bm a\in\mathcal A:
\bm a_k^\top\bm x>b_k\}.$ Assume that (i) $V_k(\bm x)$ is nonempty for every $\bm x\in\mathcal X$ and $k\in I$; (ii) $\mathcal P$ contains a distribution that strictly satisfies every moment inequality; and (iii) for every $\bm x\in\mathcal X$ and $\bm\beta\in\mathbb R_+^T$, the regional conic minimization problems over $S(\bm x)$ and $\operatorname{cl}_{\mathcal A}V_k(\bm x)$, $k\in I$, have finite optimal values, zero duality gaps, and attained dual optima. Then \eqref{chanceDRO} is equivalent to the following finite-dimensional bilinear reformulation:
\begin{subequations}\label{chance-reformulation}
\begin{align}
\min_{\bm x\in \mathcal{X}} \quad 
& \bm c^\top \bm x, \notag \\[0.3em]
\text{s.t.}\quad 
& \alpha - \bm\beta^\top \bm\gamma \ge 1-\epsilon, \\[0.3em]
& \alpha \le 1 - \sum_{i \in I} \bm p_{0i}^\top \bm d_i - \sum_{i \in I} \mu_{0i} b_i,\\[0.3em]
& - \bm D_i^\top \bm p_{0i} - \mu_{0i}\bm x
   = \sum_{t\in[T]} \bm E_{it}^\top \beta_t,
   \qquad \forall i\in I, \\[0.3em]
& \alpha \le 0 - \sum_{i\in I} \bm p_{ki}^\top \bm d_i + \mu_{1k}b_k,
   \qquad \forall k\in I, \\
& -\bm D_i^\top \bm p_{ki}
  = \sum_{t\in[T]}\bm E_{it}^\top \beta_t,
  \qquad \forall k\in I,\ \forall i\in I\setminus\{k\},\label{eq:dual-violation-k-other} \\
& -\bm D_k^\top \bm p_{kk} + \mu_{1k}\bm x  = \sum_{t\in[T]}\bm E_{kt}^\top \beta_t, \quad \forall k\in I,\\
& \alpha\in\mathbb R,\quad \bm p_{0i}, \bm p_{ki} \in \mathcal K_i^*, \quad \bm\beta\in \mathbb R_+^{T}, \quad
  \mu_{0i}\ge 0,\quad
  \mu_{1k}\ge 0 \quad \forall k\in I, \forall i\in I.
\end{align}
\end{subequations}
\end{proposition}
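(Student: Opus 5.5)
Fix $\bm x\in\mathcal X$ and prove that the chance constraint holds at $\bm x$ exactly when the system \eqref{chance-reformulation} has a solution in the auxiliary variables $(\alpha,\bm\beta,\bm p,\bm\mu)$. Since $h_1=I_1$, the satisfaction event is $S(\bm x)$ and the chance constraint reads $\inf_{\mathbb P\in\mathcal P}\mathbb E_{\mathbb P}[z(\bm a,\bm x)]\ge 1-\epsilon$, with $z:=\mathbf 1_{S(\bm x)}$. This is a worst-case (infimum) expectation of a bounded measurable function over the generalized moment set of Definition~\ref{ambiguity_set_define}. The first level of the ``bi-dual'' is the probability-measure dual, exactly as in Theorem~\ref{linear-dro}:
\[
\inf_{\mathbb P\in\mathcal P}\mathbb E_{\mathbb P}[z]
=\sup_{\alpha\in\mathbb R,\ \bm\beta\ge 0}\Bigl\{\alpha-\bm\beta^\top\bm\gamma:\ \alpha-\sum_{t}\beta_t g_t(\bm a)\le z(\bm a,\bm x)\ \ \forall \bm a\in\mathcal A\Bigr\}.
\]
Weak duality is immediate by integrating the semi-infinite constraint. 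Strong duality, including attainment of the dual supremum, comes from condition (ii), the strict moment feasibility (Slater point of the moment problem), via the Shapiro/Isii moment duality that the paper already uses for Theorem~\ref{linear-dro}. Consequently the chance constraint holds iff some feasible $(\alpha,\bm\beta)$ satisfies $\alpha-\bm\beta^\top\bm\gamma\ge1-\epsilon$.

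Next, split the semi-infinite constraint according to the value of $z$. Since $\mathcal A=S(\bm x)\cup\bigcup_{k\in I}V_k(\bm x)$, the constraint is equivalent to
\[
\alpha\le 1+\inf_{\bm a\in S(\bm x)}\sum_t\beta_t g_t(\bm a),\qquad
\alpha\le \inf_{\bm a\in V_k(\bm x)}\sum_t\beta_t g_t(\bm a)\quad\forall k\in I .
\]
This mirrors the two-piece split of $[\cdot]_+$ in Proposition~\ref{AS-dro-dual}. On the open set $V_k(\bm x)$ the infimum of the continuous linear function $\sum_t\beta_tg_t$ equals its infimum over $\operatorname{cl}_{\mathcal A}V_k(\bm x)=\{\bm a\in\mathcal A:\bm a_k^\top\bm x\ge b_k\}$. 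This step needs $V_k(\bm x)\neq\varnothing$, which is assumption (i); it guarantees that $V_k(\bm x)$ is dense in that closed set. Closing the region in this way is what makes conic duality applicable.

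Both regional problems are conic linear programs in $\bm a=(\bm a_i)_{i\in I}$ with separable objective $\sum_i(\sum_t\beta_t\bm E_{it})\bm a_i$. The satisfaction problem has constraints $\bm D_i\bm a_i\preceq_{\mathcal K_i}\bm d_i$ and $\bm x^\top\bm a_i\le b_i$ for all $i$. The $k$-th violation problem has the cone constraints for all $i$ and the single extra constraint $-\bm x^\top\bm a_k\le -b_k$. Dualize each with multipliers $\bm p_{0i}\in\mathcal K_i^*$, $\mu_{0i}\ge0$ in the first case and $\bm p_{ki}\in\mathcal K_i^*$, $\mu_{1k}\ge0$ in the second. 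Stationarity in $\bm a_i$ produces the equality constraints of \eqref{chance-reformulation}: the $-\mu_{0i}\bm x$ term appears for every $i$, the $+\mu_{1k}\bm x$ term appears only in block $k$, and the other blocks give \eqref{eq:dual-violation-k-other}. The dual objectives are $-\sum_i\bm p_{0i}^\top\bm d_i-\sum_i\mu_{0i}b_i$ and $-\sum_i\bm p_{ki}^\top\bm d_i+\mu_{1k}b_k$. Assumption (iii) supplies zero duality gap and dual attainment. Hence ``$\alpha\le$ infimum'' is equivalent to the existence of dual multipliers with $\alpha\le$ dual objective. Weak duality alone gives the direction from the reformulation back to the chance constraint; attainment is what is needed for the converse.

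Combining the three steps, for each fixed $\bm x$ the chance constraint is equivalent to feasibility of \eqref{chance-reformulation} in the auxiliary variables, so the two problems share feasible sets in $\bm x$ and hence objectives and optima. The bilinearity arises only from the products $\mu\bm x$. I expect the main obstacle to be the measure-level strong duality for the discontinuous indicator $z$ on a possibly unbounded $\mathcal A$. Integrability is trivial because $z\in[0,1]$, but the Slater argument must be applied to a minimization with a lower-semicontinuity-free integrand. I would handle it by invoking the generic conic moment duality of \cite{shapiro2001duality}, which requires only measurability, boundedness, and the interior-point condition (ii) in the moment space. A secondary care point is the closure step, where nonemptiness of $V_k$ is exactly what is used.
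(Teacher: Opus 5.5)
Your proposal is correct and follows essentially the same route as the paper. The shared steps are measure-level duality with attainment from (ii) via the Shapiro moment-duality interior argument, the split into the satisfaction bound and the violation bounds (VIO$_k$), the closure step, and exact regional conic dualization under (iii), which produces exactly the stated multiplier blocks.

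One small correction to the closure step: nonemptiness of $V_k(\bm x)$ does not by itself make it dense in $\{\bm a\in\mathcal A:\bm a_k^\top\bm x\ge b_k\}$. The paper also uses convexity of $\mathcal A$, approaching each boundary point along a segment toward a strict violator. Convexity does hold here, because $\mathcal A$ is a product of conic-representable sets.
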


\begin{proof}
Since $M=1$, the DRCCP constraint can be written as
\[
\inf_{\mathbb P\in\mathcal P}
\mathbb P\{\bm a_i^\top\bm x\le b_i,\ \forall i\in I\}
\ge 1-\epsilon.
\]
For fixed $\bm x$, define the exact satisfaction indicator $s(\bm a,\bm x):=\mathbf 1_{S(\bm x)}(\bm a).$ The chance constraint is therefore
$\inf_{\mathbb P\in\mathcal P}
\mathbb E_{\mathbb P}[s(\bm a,\bm x)]\ge1-\epsilon$.
Since $-s(\cdot,\bm x)$ is bounded, assumption~(ii), together with the
interior argument in Step~1 of the proof of
Theorem~\ref{linear-dro} in Appendix~\ref{app:proof-thm34}, gives zero
duality gap and dual attainment for the generalized moment problem:
\begin{subequations}\label{eq:chance-dual-prob-part}
\begin{align}
\max_{\alpha,\bm\beta}\quad
&\alpha-\bm\beta^\top\bm\gamma,\\
\text{s.t.}\quad
&\alpha-\sum_{t\in[T]}\beta_t
 \sum_{i\in I}\bm E_{it}\bm a_i
 \le s(\bm a,\bm x),
 \qquad \forall\bm a\in\mathcal A,
 \label{Chance_moment_alpha}\\
&\alpha\in\mathbb R,\qquad
 \bm\beta\in\mathbb R_+^T.
\end{align}
\end{subequations}
Consequently, the DRCCP constraint is equivalent to the existence of
$(\alpha,\bm\beta)$ satisfying
$\alpha-\bm\beta^\top\bm\gamma\ge1-\epsilon$ and
\eqref{Chance_moment_alpha}.

For fixed $\bm x$ and $\bm\beta$, write
$L_{\bm\beta}(\bm a):=
\sum_{t\in[T]}\beta_t\sum_{i\in I}\bm E_{it}\bm a_i$.
Since $s(\bm a,\bm x)=1$ on $S(\bm x)$ and
$s(\bm a,\bm x)=0$ on
$\mathcal A\setminus S(\bm x)=\bigcup_{k\in I}V_k(\bm x)$,
the semi-infinite constraint separates into one satisfaction-region
bound and one violation-region bound for each $k\in I$. Moreover, assumption~(i) and convexity of $\mathcal A$ imply $\operatorname{cl}_{\mathcal A}V_k(\bm x) = \{\bm a\in\mathcal A:\bm a_k^\top\bm x\ge b_k\}.$ Indeed, every boundary point with
$\bm a_k^\top\bm x=b_k$ can be approached by convex combinations with
a strict violator in $V_k(\bm x)$. Since $L_{\bm\beta}$ is continuous,
the infimum over $V_k(\bm x)$ equals that over its relative closure.
Hence, \eqref{Chance_moment_alpha} is equivalent to
\begin{align}
\alpha
&\le
1+\inf\!\left\{
L_{\bm\beta}(\bm a):
\bm a\in\mathcal A,\ 
\bm a_i^\top\bm x\le b_i,\ \forall i\in I
\right\},
\tag{SAT}\label{eq:regional-sat}\\
\alpha
&\le
\inf\!\left\{
L_{\bm\beta}(\bm a):
\bm a\in\mathcal A,\ 
\bm a_k^\top\bm x\ge b_k
\right\},
\qquad \forall k\in I.
\tag{VIO$_k$}\label{eq:regional-vio}
\end{align}

The key distinction is that \textup{(SAT)} contains all $I$
satisfaction inequalities and therefore introduces one multiplier
$\mu_{0i}$ for every $i\in I$. In contrast, each
\textup{(VIO$_k$)} contains only the reversed $k$th inequality and
introduces the single multiplier $\mu_{1k}$, which appears only in the
$k$th stationarity condition. Assumption~(iii) permits exact conic
dualization of these regional problems: \textup{(SAT)} yields the
$(\bm p_{0i},\mu_{0i})$ block, while each \textup{(VIO$_k$)} yields
the $(\bm p_{ki},\mu_{1k})$ block in
\eqref{chance-reformulation}. The vector $\bm\beta$ is shared across
all regional subproblems, whereas their conic certificates are
region-specific.

Combining these regional duals with
$\alpha-\bm\beta^\top\bm\gamma\ge1-\epsilon$ and the corresponding
dual-variable domains gives exactly \eqref{chance-reformulation}.
Since the argument holds for every $\bm x\in\mathcal X$, the two
optimization problems are equivalent.
\end{proof}

\begin{remark}\label{DRCCP_remark}
The reformulation yields several immediate modeling insights.
\begin{itemize}
    \item \textbf{Individual, joint, and combinatorial DRCCPs.}
    When $I_m=1$, the formulation reduces to an individual DRCCP and involves two dual subproblems. When $I_m>1$, the joint DRCCP gives rise to $I_m+1$ dual subproblems: one corresponding to the case in which all constraints in the group are satisfied ($z=1$), and $I_m$ additional subproblems, each associated with the violation of a single constraint within the group ($z=0$). For a general $h_m$-out-of-$I_m$ combinatorial DRCCP, direct eventwise dualization may require $\binom{I_m}{h_m}+\binom{I_m}{h_m-1}=\binom{I_m+1}{h_m}$ regional subproblems; this number has exponential worst-case growth in $I_m$, making the explicit dual reformulation computationally impractical as a general-purpose solution approach.
    
    \item \textbf{Single and multiple DRCCPs.}
    The reformulation can be applied groupwise to multiple DRCCPs ($M\ge 2$) when the proposition's local conditions hold for each group. Each group contributes its own dual constraints of the form~\eqref{eq:chance-dual-prob-part}, while the groups share the decision vector $\bm x$.
    
    \item \textbf{AS-DRO as a limiting case.}
    At $\epsilon=0$, the probability requirement in \eqref{chance-reformulation} becomes the corresponding AS-DRO requirement under the conditions of Proposition~\ref{prop:drccp_dual}. Moreover, under Definition~\ref{def:Marginal-ambiguity}, the joint and individual versions of AS-DRO are equivalent.
\end{itemize}
\end{remark}

\subsubsection{BiCS Approach: Primal Perspective}
The dual reformulation provides a bilinear representation of DRCCPs under the local conditions of Proposition~\ref{prop:drccp_dual}. These conditions can be restrictive in settings with moments of higher order or combinatorial structures. At $\epsilon=0$, the DRCCP probability requirement becomes an almost-sure requirement for the same group satisfaction event. These observations motivate a primal treatment that uses the exact satisfaction event directly. For each $m\in[M]$, define
\[
s_m(\bm a_m,\bm x):=
\mathbf 1_{\left\{
|\{i\in[I_m]:f_{mi}(\bm a_m,\bm x)\le b_{mi}\}|\ge h_m
\right\}}.
\]

For a finite $\hat A_m=\{\bm a_{mk}\}_{k=0}^{K_m}\subseteq\mathcal A_m$, let
$\mathcal P_m(\hat A_m):=\{\mathbb P_m\in\mathcal P_m:
\operatorname{supp}(\mathbb P_m)\subseteq\hat A_m\}$, write
$P_{mk}:=\mathbb P_m(\{\bm a_{mk}\})$, and adopt
$\inf\varnothing=+\infty$, so an empty restricted row is temporarily
vacuous. We adapt the master and subproblem structure of
Algorithm~\ref{BiCSalgorithm} as follows.
\begin{subequations}\label{DRCCP-master problem}
\begin{align}
\textbf{MP-DRCCP:}\quad
\min\quad
& \bm c^\top\bm x\\
\text{s.t.}\quad
& \inf_{\mathbb P_m\in\mathcal P_m(\hat A_m)}
\sum_{k=0}^{K_m}P_{mk}\hat s_{mk}
\ge 1-\epsilon_m,
\qquad \forall m\in[M],
\label{inner_min_constraint}\\
& \hat s_{mk}^i=1
\ \Longrightarrow\
f_{mi}(\bm a_{mk},\bm x)\le b_{mi},
\qquad
\forall i\in[I_m],\
\forall k=0,\ldots,K_m,\
\forall m\in[M],\\
& (\hat s_{mk}-1)I_m
\le
\sum_{i=1}^{I_m}\hat s_{mk}^i-h_m,
\qquad
\forall k=0,\ldots,K_m,\
\forall m\in[M],
\label{Chance-satisfy-indicator}\\
& \bm x\in\mathcal X,\qquad
\hat s_{mk}^i,\hat s_{mk}\in\{0,1\},
\qquad
\forall i\in[I_m],\
\forall k=0,\ldots,K_m,\
\forall m\in[M].
\end{align}
\end{subequations}
\begin{subequations}\label{DRCCP-SubProblem}
\begin{align}
\textbf{SP-DRCCP:}\quad
\underline{\Omega}_m^*(\bm x^*)
&=
\inf_{\tilde{\mathbb P}_m\in\mathcal P_m}
\mathbb E_{\tilde{\mathbb P}_m}
\bigl[s_m(\bm a_m,\bm x^*)\bigr],
\qquad m\in[M].
\end{align}
\end{subequations}

In MP-DRCCP, $\hat s_{mk}^i$ and $\hat s_{mk}$ are auxiliary binary variables representing componentwise and groupwise satisfaction at scenario $\bm a_{mk}$, respectively. The implication and cardinality constraints imply
$\hat s_{mk}\le s_m(\bm a_{mk},\bm x)$. Conversely, choosing
$\hat s_{mk}^i=\mathbf 1_{\{f_{mi}(\bm a_{mk},\bm x)\le b_{mi}\}}$
and $\hat s_{mk}=s_m(\bm a_{mk},\bm x)$ satisfies these constraints.
Thus, the mixed-integer formulation and the restricted chance constraint have the same feasible decisions $\bm x$. Since
$\mathcal P_m(\hat A_m)\subseteq\mathcal P_m$, MP-DRCCP is a relaxation of
\eqref{chanceDRO}.

\paragraph{Solving Master Problem}
For a nonempty restricted ambiguity set, the inner minimization in \eqref{inner_min_constraint} is a finite linear program in the probability weights. It may therefore be handled by the LP duality or KKT constructions of Section~\ref{BiCS Master Problem}, with the objective direction adjusted to minimization. This observation concerns only the problem over probability weights and does not remove the difficulty of encoding events in the sample space.

\paragraph{Solving Subproblem}
Given a candidate solution $\bm x^*$ from the master problem, the subproblem computes the worst-case satisfaction probability in the full ambiguity set. Once $\bm x^*$ is fixed, $s_m(\bm a_m,\bm x^*)$ is determined by the scenario. A distribution with finite support that proves insufficient satisfaction supplies a violated distribution cut; otherwise, feasibility is asserted only through one of the valid certificates stated below. We use the same two oracle architectures as in Standard DRO.
\emph{Oracle 1} provides a direct formulation that jointly optimizes over scenarios and their associated probability weights, while \emph{Oracle 2} exploits the structure of the ambiguity set through CG to improve scalability.

The satisfaction indicator $s_m$ is bounded. Applying the proof of Corollary~\ref{cor:SP-O1-oracle} to $-s_m$ shows that the infimum over $\mathcal P_m$ equals the infimum over admissible distributions supported on at most $T_m+1$ points. Hence Oracle~1 takes the form
\begin{subequations}\label{SP-bilinear-DRCCP}
\begin{align}
\textbf{SP-O1-DRCCP:}\quad
\underline{\Omega}_m^*(\bm{x}^*) = \inf\;& \sum_{r=1}^{T_m+1} \tilde{P}_{mr}\,
s_m(\tilde{\bm a}_{mr},\bm x^*) \label{DRCCP-O1-SP}\\
\text{s.t.}\;& \sum_{r=1}^{T_m+1}\tilde P_{mr}=1, \hspace{2mm} \sum_{r=1}^{T_m+1}\tilde P_{mr}g_t(\tilde{\bm a}_{mr})\le\gamma_{mt},
\quad \forall t\in[T_m],\\
& \tilde{\bm a}_{mr}\in\mathcal A_m,\quad \tilde P_{mr}\ge0,
\quad \forall r\in[T_m+1].
\end{align}
\end{subequations}

Oracle~2 instead separates probability assignment from scenario generation, handled by the following PMP and PSP, respectively.

\begin{subequations}\label{Chance-PricingMasterProblem}
\begin{align}
\textbf{PMP-DRCCP:}\quad \omega_m^*(\bm x^*) =
\min_{P_{mk}\ge0}\;& \sum_{k=0}^{K_m} P_{mk}\,s_{mk}^*\\
\text{s.t.}\;& \eqref{PMP-constraints},\notag
\end{align}
\end{subequations}

Here $s_{mk}^*:=s_m(\bm a_{mk},\bm x^*)$. If the restricted PMP is infeasible, the Farkas branch of Algorithm~\ref{SPCG} applies unchanged because its ray certificate depends only on the ambiguity constraints, not on the objective coefficients. 

Let $(\alpha_m^*,\bm\beta_m^*)$ be an optimal dual solution of the feasible PMP. Its objective is $\alpha_m-\bm\beta_m^\top\bm\gamma_m$, and its constraints are $\alpha_m-\sum_{t\in[T_m]}\beta_{mt}g_t(\bm a_{mk})\le s_{mk}^*$.

\begin{subequations}\label{Chance-PricingSubProblem}
\begin{align}
\textbf{PSP-DRCCP:}\quad
\pi_m^*(\bm x^*,\alpha_m^*,\bm\beta_m^*) =
\sup_{\tilde{\bm a}_m\in\mathcal A_m}\;&
\alpha_m^* - \sum_{t\in[T_m]} \beta_{mt}^*g_t(\tilde{\bm a}_m)
-s_m(\tilde{\bm a}_m,\bm x^*).
\end{align}
\end{subequations}

\begin{remark}
    In an explicit finite encoding of the pricing subproblem, when $I_m=h_m$, the violation event may be represented by enumerating the violated component. When $I_m>h_m$, enumeration of violation patterns or a binary cardinality representation may be used, but exactness of a buffered formulation requires the conditions stated above for that encoding.
\end{remark}

Finite LP duality for the restricted PMP, together with weak duality for the generalized moment problem, gives
\[
\omega_m^*(\bm x^*)-\pi_m^*
\le \underline{\Omega}_m^*(\bm x^*)\le \omega_m^*(\bm x^*).
\]
For an outer algorithmic tolerance $\eta>0$, kept distinct from the chance tolerance $\epsilon_m$, the DRCCP oracle applies the Standard DRO return conditions to the violation indicator $1-s_m$. Hence, if the outer method terminates with the corresponding certificate for every row, then $\underline{\Omega}_m^*(\bm x^*)\ge 1-\epsilon_m-\eta$ for every $m$, while the restricted master supplies a lower bound on the original objective. At $\eta=0$, this conclusion is exact conditional on exact certificates and termination.


\subsection{Local Information} \label{sec:local_info}

Standard DRO models typically impose \emph{global} distributional information, such as bounds on the mean vector and covariance matrix. While these constraints shape the overall distribution, they do not specify how probability mass is distributed across different regions of the sample space. In many practical settings, however, additional distributional information associated with prescribed regions of the sample space can be collected at relatively low cost; we refer to this as local information. Examples include histogram counts over predefined bins, empirical estimates of tail probabilities, or the fraction of observations lying in a typical range around the mean. Incorporating such local information can yield more realistic ambiguity sets. 

Formally, let $L\in\mathbb N$. Starting from the global ambiguity set $\mathcal{P}$ in \eqref{p_gm_ambiguity}, we define a \emph{local information ambiguity set} by adding constraints on measurable regions $A_l\in\mathcal F$, $l\in[L]$:
\begin{align} 
\mathcal{P}^{\text{local}}
&= 
\mathcal{P} \cap 
\Bigl\{
\mathbb P \;\big|\;
\int_{A_l} \psi_l(\bm a)\, \mathrm{d}\mathbb P(\bm a) \le \phi_l,\;\forall l\in[L]
\Bigr\} \label{Local-constraint}\\
&=
\mathcal{P} \cap
\Bigl\{
\mathbb P \;\big|\;
\int_{\mathcal A} \mathbf{1}_{\{\bm a\in A_l\}} \psi_l(\bm a)\, \mathrm{d}\mathbb P(\bm a) \le \phi_l,\;\forall l\in[L]
\Bigr\}, \notag
\end{align}

where $\psi_l:\mathcal A\to\mathbb R$ is measurable and $\phi_l\in\mathbb R$. Here, $\mathbf{1}_{\{\bm a\in A_l\}}$ is the membership indicator of the region $A_l$, equal to one when $\bm a\in A_l$ and zero otherwise. Morevoer, we assume locally that $\mathbf 1_{\{\bm a\in A_l\}}\psi_l(\bm a)$ is integrable under every $\mathbb P\in\mathcal P$ and that $\mathcal P^{\mathrm{local}}$ is nonempty. The functions $\psi_l$ may encode local moments, tail indicators, or other statistics specific to a region. In particular, a lower probability bound $\mathbb P(A_l)\ge\alpha_l$ fits this common $\le$ form by taking $\psi_l\equiv-1$ and $\phi_l=-\alpha_l$.

The same local constraints can be imposed on any baseline ambiguity set on $(\mathcal A,\mathcal F)$, provided that the relevant integrals are well defined and the resulting set is nonempty. The generalized moment specification is used below for the oracle reformulations and numerical illustration.

Local information constraints substantially increase the expressive power of the ambiguity set. Moment-based DRO restricts only a few summary statistics and leaves the allocation of probability mass across the sample space largely unconstrained. As a result, distributions with bell shapes, heavy tails, or even distributions concentrated on only two points may satisfy the same moment conditions. Local information mitigates this limitation by imposing probability bounds on specific subregions of the sample space. For instance, requiring a prescribed proportion of mass to lie within one standard deviation of the mean can encode a concentration pattern resembling a normal distribution without assuming a parametric model. More generally, asymmetric regional constraints can encode skewed patterns or patterns resembling a Gamma distribution. In this way, these local information constraints allow the ambiguity set to encode concentration and tail behavior while maintaining distributional robustness.

\begin{example}[Local Information Resembling a Normal Distribution] \label{ex.local}
Let $\bm a=(a_1,a_2)$ be defined on a measurable space $(\mathcal A,\mathcal F)$, where $\mathcal A\subseteq\mathbb R^2$. Consider the following Standard DRO problem:
\[
\begin{aligned}
\min_{\bm x \in \mathbb{R}_+^2} \quad & c_1 x_1 + c_2 x_2 \\
\text{s.t.} \quad 
& \mathbb{E}_{\mathbb{P}}[a_1 x_1 + a_2 x_2] \le b 
\qquad \forall\, \mathbb{P} \in \mathcal{P}^{\text{local-Normal}},
\end{aligned}
\]
where $\mathcal{P}^{\text{local-Normal}}$ augments global moment information with regional probability constraints to capture an ambiguity structure resembling a normal distribution. Specifically, let $\bm\mu:=(\mu_1,\mu_2)$ denote the nominal mean:
\[
\mathcal{P}^{\mathrm{local-Normal}}
=
\left\{
\mathbb{P}\in\mathcal M_+(\mathcal A,\mathcal F) :
\begin{array}{l}
\mathbb P(\mathcal A)=1,\\[1.5mm]
\mathbb{E}_{\mathbb{P}}\!\left[a_j\right] \in [\,\underline{\mu}_j,\overline{\mu}_j\,],
\quad j = 1,2,\\[1.5mm]
\mathbb{E}_{\mathbb{P}}\!\left[(a_j - \mu_j)^2\right] \le \bar{\sigma}_j^2,
\quad j = 1,2,\\[1.5mm]
\mathbb{E}_{\mathbb{P}}\!\left[(a_1 - \mu_1)(a_2 - \mu_2)\right]
\le \rho_{12}\, \bar{\sigma}_1\,\bar{\sigma}_2,\\[1.5mm]
\mathbb{P}(\bm a \in B_1) \ge \alpha_1,\quad
\mathbb{P}(\bm a \in B_2 \setminus B_1) \ge \alpha_2 - \alpha_1.
\end{array}
\right\}.
\]

The sets $B_1$ and $B_2$ represent concentration regions within one and two standard deviations of the nominal mean $\bm\mu$, where $0\le\alpha_1\le\alpha_2\le1$ and $\bar\sigma_j>0$ for $j=1,2$. 
In this example, we use coordinatewise boxes to approximate these regions:
\[
B_k
=
\left\{
\bm a\in\mathcal A :
|a_j-\mu_j| \le k \bar\sigma_j, \quad j=1,2
\right\}, \quad k=1,2.
\]
\end{example}

While local information enhances the modeling capability of the ambiguity set, it also complicates the resulting optimization problem. The following subsections present solution schemes for DRO models with such local constraints.

\subsubsection{Reformulation Approach: Dual Perspective}

For each $m\in[M]$, let $\mathcal P_m^{\mathrm{local}}$ be the rowwise counterpart of \eqref{Local-constraint}, with data $(A_{ml},\psi_{ml},\phi_{ml})$, $l\in[L_m]$. If the regularity conditions used in Theorem~\ref{linear-dro} hold and $\mathcal P_m^{\mathrm{local}}$ contains a distribution under which all global and local inequalities hold strictly, generalized moment strong duality applies to the first dualization. This strict feasibility condition is local to the dual reformulation; it is not required by the primal model below. The dual objective for row $m$ is $\alpha_m+\sum_{t\in[T_m]}\beta_{mt}\gamma_{mt}+\sum_{l\in[L_m]}\tau_{ml}\phi_{ml}$, and the $m$th DRO row requires this quantity to be at most $b_m$. For fixed $\bm x$, $\bm\beta_m\in\mathbb R_+^{T_m}$, and $\bm\tau_m\in\mathbb R_+^{L_m}$, when the following supremum is finite, the smallest feasible $\alpha_m$ is

\begin{equation}\label{local_alpha_max_expression}
\alpha_m
=
\sup_{\bm a_m\in \mathcal{A}_m}
\Bigl\{
f_m(\bm a_m,\bm x)
-\sum_{t\in[T_m]}\beta_{mt} g_t(\bm a_m)
-\sum_{l\in[L_m]}\tau_{ml}\,\mathbf{1}_{\{\bm a_m\in A_{ml}\}} \psi_{ml}(\bm a_m)
\Bigr\}.
\end{equation}

Here $\tau_{ml}\ge0$ are the dual variables associated with the local information constraints. The main difficulty lies in the problem over the sample space in \eqref{local_alpha_max_expression}, whose indicator terms are discontinuous and whose remaining nonlinear terms may also make the formulation nonconvex.

\begin{remark}
Several challenges arise when attempting to reformulate the inner maximization problem \eqref{local_alpha_max_expression} from the dual perspective:

\begin{enumerate}
\item Even without the indicator terms, indefinite cross moments or moments of higher order may make the problem over the sample space nonconvex. For instance, the covariance expression in Example~\ref{ex.local} is generally indefinite.

\item In principle, one could enumerate all patterns of regional membership for each realization $\bm a_m$. However, similar to combinatorial 
DRCCP models, the number of such patterns grows exponentially in $L_m$. Consequently, full dualization over the sample space quickly becomes impractical 
when many local information constraints are included.

\item For coordinatewise boxes, membership is described by linear inequalities but the complement is disjunctive. An exact finite representation therefore requires additional enumeration or binary variables and valid finite bounds when a Big-$M$ encoding is used.

\end{enumerate}
\end{remark}

Overall, these features can make a tractable dual reformulation over the sample space difficult; tractability depends on the regional and moment structure.

\subsubsection{BiCS Approach: Primal Perspective}

The primal BiCS framework avoids dualizing over the entire sample space by evaluating regional membership for generated scenarios. For a finite set $\hat A_m=\{\bm a_{mk}:k=0,\ldots,K_m\}$, define $\mathcal P_m^{\mathrm{local}}(\hat A_m):=\{\mathbb P_m\in\mathcal P_m^{\mathrm{local}}:\operatorname{supp}(\mathbb P_m)\subseteq\hat A_m\}$. For this variant with local information, we adopt $\sup\varnothing=-\infty$, so an empty restricted master row is temporarily vacuous; the initial scenario pool need not support a feasible local distribution.

\begin{subequations}\label{Local-DRO-master problem}
\begin{align}
\textbf{MP-Local-DRO:}\quad
\min\;& \bm{c}^\top \bm{x}\\
\text{s.t.}\;& \sup_{\mathbb P_m \in \mathcal{P}_m^{\mathrm{local}}(\hat A_m)} \sum_{k = 0}^{K_m} P_{mk}f_m(\bm{a}_{mk},\bm{x}) \le b_m, \quad \forall m\in[M]\\
& \bm{x} \in \mathcal{X}
\end{align}
\end{subequations}
\begin{subequations}\label{Local-DRO--SubProblem}
\begin{align}
\textbf{SP-Local-DRO:}\quad
\Omega_m^*(\bm x^*) = \sup_{\tilde{\mathbb{P}}_m\in\mathcal{P}^{\text{local}}_m}\;&
\mathbb{E}_{\tilde{\mathbb{P}}_m}[f_m(\bm a_m,\bm x^*)],
\quad m\in[M].
\end{align}
\end{subequations}

For a nonempty restricted local ambiguity set, the master row remains a finite linear program in the probability weights after the corresponding local constraints are added. Thus the LP duality or KKT constructions of Section~\ref{sec:DRO} remain available under their stated conditions, and $\mathcal P_m^{\mathrm{local}}(\hat A_m)\subseteq\mathcal P_m^{\mathrm{local}}$ preserves the relaxation property of the master problem.

The main modification occurs in the oracle subproblem \eqref{Local-DRO--SubProblem}, where the full local ambiguity set and exact regional memberships must be enforced. For each $m\in[M]$ and $l\in[L_m]$, define $q_{ml}(\bm a_m):=\mathbf 1_{\{\bm a_m\in A_{ml}\}}\psi_{ml}(\bm a_m)$.

\begin{corollary}\label{SP-bilinear-Local-DRO-cor}
Fix $m\in[M]$ and $\bm x^*\in\mathcal X$, and let $N_m:=T_m+L_m+1$. Under the local nonemptiness and integrability conditions above and finiteness of $\Omega_m^*(\bm x^*)$, applying the proof of Corollary~\ref{cor:SP-O1-oracle} to the augmented list $\{g_t\}_{t\in[T_m]}\cup\{q_{ml}\}_{l\in[L_m]}$ yields equality between the full supremum and the supremum over distributions with at most $N_m$ support points, giving
\begin{subequations}\label{SP-bilinear-Local-DRO}
\begin{align}
\textbf{SP-O1-Local-DRO:} \hspace{2mm}
\Omega_m^*(\bm{x}^*)
    = \sup_{\{\tilde{P}_i,\tilde{\bm a}_i,\tilde{z}_{il}\}}
    &\;\sum_{i=1}^{N_m} \tilde{P}_i\, f_m(\tilde{\bm a}_{i},\bm{x}^*) 
    \label{SP-bilinear-obj}\\[2mm]
\text{s.t.}\quad
& \sum_{i=1}^{N_m}\tilde P_i=1,\\[1mm]
& \sum_{i=1}^{N_m}\tilde P_i g_t(\tilde{\bm a}_i)\le\gamma_{mt},
    \qquad \forall t\in[T_m],\\[1mm]
& \sum_{i=1}^{N_m} \tilde{P}_i\, \tilde{z}_{il}\, \psi_{ml}(\tilde{\bm a}_i)
    \le \phi_{ml},
    \qquad \forall l\in[L_m], \label{SP-local-info}\\[1mm]
& \tilde z_{il}=\mathbf 1_{\{\tilde{\bm a}_i\in A_{ml}\}},
    \qquad \forall i\in[N_m],\;\forall l\in[L_m],
    \label{SP-indicator-1}\\[1mm]
& \tilde{\bm a}_i \in \mathcal{A}_{m},\quad
  \tilde{P}_i \ge 0,
    \qquad \forall i\in[N_m], \label{SP-feasible-a}\\[1mm]
& \tilde{z}_{il} \in \{0,1\},
    \qquad \forall i\in[N_m],\;\forall l\in[L_m]. \label{SP-binary}
\end{align}
\end{subequations}
\end{corollary}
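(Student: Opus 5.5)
The plan is to treat the local constraints as additional generalized moment conditions and then repeat the argument of Corollary~\ref{cor:SP-O1-oracle}. The starting observation is that $\mathcal P_m^{\mathrm{local}}$ is itself a generalized moment ambiguity set in the sense of Definition~\ref{ambiguity_set_define}. Its moment functions are the augmented list $g_1,\ldots,g_{T_m},q_{m1},\ldots,q_{mL_m}$, with bounds $\gamma_{mt}$ and $\phi_{ml}$. The functions $q_{ml}=\mathbf 1_{A_{ml}}\psi_{ml}$ are measurable because $A_{ml}\in\mathcal F_m$ and $\psi_{ml}$ is measurable. By the standing local assumption, they are integrable under every $\mathbb P\in\mathcal P_m$. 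No continuity or lower semicontinuity of $q_{ml}$ is needed, because Corollary~\ref{cor:SP-O1-oracle} uses only nonemptiness, Assumption~\ref{assumption:regularity}(ii), and finiteness of the supremum. That is exactly why I would invoke the corollary rather than Proposition~\ref{prop:SP-O1}: the indicators destroy the semicontinuity required for attainment.

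\textbf{Step 1: SP-O1-Local-DRO is bounded above by $\Omega_m^*(\bm x^*)$.} Take any feasible tuple $\{(\tilde P_i,\tilde{\bm a}_i,\tilde z_{il})\}$. Constraint \eqref{SP-indicator-1} forces $\tilde z_{il}\psi_{ml}(\tilde{\bm a}_i)=q_{ml}(\tilde{\bm a}_i)$. Hence $\mathbb Q:=\sum_i\tilde P_i\delta_{\tilde{\bm a}_i}$ satisfies $\mathbb Q(\mathcal A_m)=1$, all global moment inequalities, and $\int q_{ml}\,d\mathbb Q\le\phi_{ml}$. Integrability is trivial for a finitely supported measure, so $\mathbb Q\in\mathcal P_m^{\mathrm{local}}$. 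The objective of the tuple equals $\mathbb E_{\mathbb Q}[f_m]$, which is at most $\Omega_m^*(\bm x^*)$.

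\textbf{Step 2: the reverse inequality via an $\eta$-optimal distribution.} For arbitrary $\eta>0$, pick $\mathbb P^\eta\in\mathcal P_m^{\mathrm{local}}$ with $\mathbb E_{\mathbb P^\eta}[f_m]\ge\Omega_m^*(\bm x^*)-\eta$. This expectation is finite by Assumption~\ref{assumption:regularity}(ii), since $\mathcal P_m^{\mathrm{local}}\subseteq\mathcal P_m$. Apply Richter--Rogosinski \cite[Lemma~3.1]{shapiro2001duality} to the $N_m+1$ integrable functions $1,g_1,\ldots,g_{T_m},q_{m1},\ldots,q_{mL_m},f_m(\cdot,\bm x^*)$. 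This gives at most $N_m+1$ atoms in $\mathcal A_m$ whose weights reproduce all these expectations. Now fix those atoms and solve the weights LP. It maximizes $\sum_j p_jf_m(\bm a_j,\bm x^*)$ subject to the $N_m$ equalities $\sum_jp_j=1$, $\sum_jp_jg_t(\bm a_j)=v_t$, and $\sum_jp_jq_{ml}(\bm a_j)=w_l$, where $v_t,w_l$ are the $\mathbb P^\eta$-moments. The LP is feasible and bounded, so it has a basic optimum with at most $N_m$ positive weights. That optimum keeps $v_t\le\gamma_{mt}$ and $w_l\le\phi_{ml}$, and its objective is at least $\Omega_m^*(\bm x^*)-\eta$. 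Setting $\tilde z_{il}:=\mathbf 1_{\{\tilde{\bm a}_i\in A_{ml}\}}$ and zero-padding to $N_m$ slots gives a feasible SP-O1-Local-DRO tuple. Letting $\eta\downarrow0$ shows that the supremum of SP-O1-Local-DRO is at least $\Omega_m^*(\bm x^*)$, which together with Step~1 gives equality. The same argument also yields the intermediate statement about distributions with at most $N_m$ support points.

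The main obstacle is bookkeeping rather than analysis. First, the count must be right: $N_m$ equalities, one normalization plus $T_m+L_m$ moment equalities, give $N_m$ positive weights, while Richter--Rogosinski needs $N_m+1$ atoms because $f_m$ is matched as well. Second, the binary encoding must reproduce $q_{ml}$ exactly at every atom, including atoms carrying zero weight. Third, the statement asserts a supremum and not a maximum, since attainment can fail without semicontinuity; the proof should therefore stop at equality of suprema and not claim that an optimal distribution exists.
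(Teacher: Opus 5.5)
Your proposal is correct and takes the paper's route. The paper's justification is exactly to treat $q_{ml}=\mathbf 1_{A_{ml}}\psi_{ml}$ as additional moment functions, rerun the Richter--Rogosinski and basic-solution weights-LP argument from the proof of Corollary~\ref{cor:SP-O1-oracle}, and use $\tilde z_{il}$ to encode membership so that $\tilde z_{il}\psi_{ml}(\tilde{\bm a}_i)=q_{ml}(\tilde{\bm a}_i)$; your atom count ($N_m+1$ reduced to $N_m$) and your reliance on nonemptiness, Assumption~\ref{assumption:regularity}(ii), local integrability and finiteness rather than semicontinuity both match that argument.
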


If the supremum in \eqref{Local-DRO--SubProblem} is attained, the supremum above may be replaced by a maximum. The exact membership equalities in~\eqref{SP-indicator-1} and~\eqref{Local-PricingSubProblem} can be replaced by the usual Big-$M$ pair with nonstrict inequalities only under boundary separation specific to the model; otherwise, a different exact encoding is required. The products in \eqref{SP-local-info} can make the oracle a mixed-integer nonlinear program when $\psi_{ml}$ or the global moment functions are nonlinear.

Compared with the preceding Oracle~1 formulations, local information introduces additional coupling between scenario generation and probability assignment through \eqref{SP-local-info}. Oracle~2 retains the CG separation: probability allocation is handled in the PMP, while new scenarios and their exact regional memberships are determined in the PSP.
\begin{subequations}\label{Local-PricingMasterProblem}
\begin{align}
\textbf{PMP-Local-DRO:}\quad 
\omega_m^*(\bm x^*) =
\max\;& \sum_{k=0}^{K_m} P_{mk}\,f_m(\bm a_{mk},\bm{x}^*)\\
\text{s.t.}\;& 
\sum_{k=0}^{K_m} P_{mk} = 1, \label{local-alpha-dual}\\
& \sum_{k=0}^{K_m} P_{mk}\,g_t(\bm{a}_{mk}) \le \gamma_{mt}
\quad \forall t \in [T_m], \label{local-beta-dual}\\
& \sum_{k=0}^{K_m} P_{mk}\, z_{mkl}\, \psi_{ml}(\bm a_{mk}) \le \phi_{ml}
\quad \forall l \in [L_m], \\
& P_{mk} \ge 0
\quad k=0,\ldots,K_m.
\end{align}
\end{subequations}

Here $z_{mkl}:=\mathbf 1_{\{\bm a_{mk}\in A_{ml}\}}$. If PMP-Local-DRO is infeasible, Proposition~\ref{prop:PMP-infeasible} applies to the augmented moment list $\{g_t\}_{t\in[T_m]}\cup\{q_{ml}\}_{l\in[L_m]}$: since $\mathcal P_m^{\mathrm{local}}$ is nonempty, the associated Farkas pricing problem has a positive value, and adding a scenario with a strictly positive ray objective invalidates the current certificate. If PMP-Local-DRO is feasible, let $(\alpha_m^*,\bm\beta_m^*,\bm\tau_m^*)$ be an optimal dual solution. Its objective is $\alpha_m+\sum_{t\in[T_m]}\beta_{mt}\gamma_{mt}+\sum_{l\in[L_m]}\tau_{ml}\phi_{ml}$, and its corresponding constraints are
\[
\alpha_m+\sum_{t\in[T_m]}\beta_{mt}g_t(\bm a_{mk})
+\sum_{l\in[L_m]}\tau_{ml}z_{mkl}\psi_{ml}(\bm a_{mk})
\ge f_m(\bm a_{mk},\bm x^*),
\quad k=0,\ldots,K_m.
\]
\begin{subequations}\label{Local-PricingSubProblem}
\begin{align}
\textbf{PSP-Local-DRO:}\notag\\
\pi_m^*(\bm x^*,\alpha_m^*,\bm\beta_m^*, \bm\tau_m^*) =
\sup_{\tilde{\bm a}_m,\tilde{z}_l}\;&
f_m(\tilde{\bm a}_{m},\bm{x}^*) 
- \alpha_m^* 
- \sum_{t\in[T_m]} \beta_{mt}^*\,g_t(\tilde{\bm{a}}_m)
- \sum_{l\in[L_m]} \tau^*_{ml}\, \tilde{z}_{l}\, \psi_{ml}(\tilde{\bm a}_{m})\\[1mm]
\text{s.t.}\;&
\tilde z_l=\mathbf 1_{\{\tilde{\bm a}_m\in A_{ml}\}},
\qquad \forall l\in[L_m],\\
& \tilde{\bm a}_{m} \in \mathcal{A}_m,\quad \tilde{z}_l \in \{0,1\}
\quad \forall l\in[L_m].
\end{align}
\end{subequations}

For a feasible PMP, weak and finite LP duality yield $\omega_m^*(\bm x^*)\le\Omega_m^*(\bm x^*)
\le\omega_m^*(\bm x^*)+\pi_m^*.$ Thus a tolerance on reduced cost supplies the same valid Oracle~2 value certificate as in Standard DRO. 

We use Example~\ref{ex.local} to examine how local information changes the worst-case distribution. The four rows in Figure~\ref{fig:Local_distribution} form a $2\times2$ design: global information is described by first- or second-order moments, with or without local constraints. The displayed values are the objectives of the outer minimization. Restricting the ambiguity set weakly expands the feasible region for $\bm x$ and therefore weakly lowers the minimum objective value.

\begin{figure}[h!]
    \centering
    \includegraphics[width=0.95\textwidth]{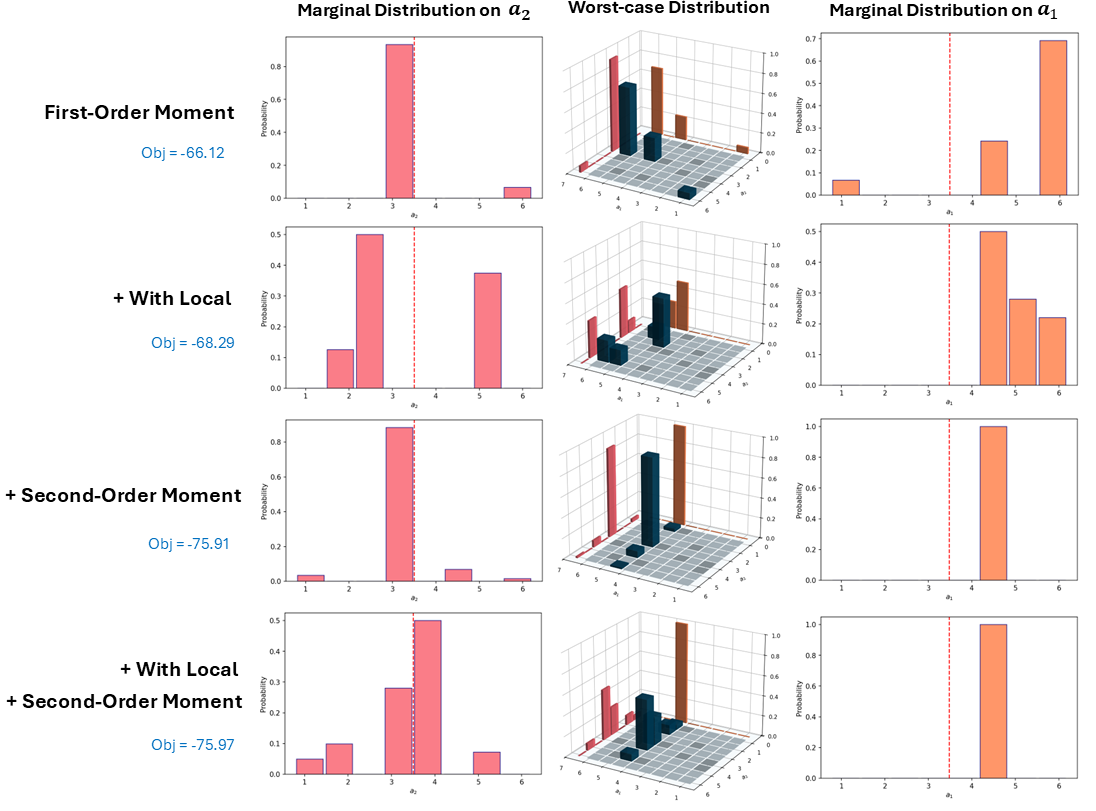}
    \caption{Outer objective values and worst-case distributions with and without local information in Example~\ref{ex.local}.}
    \label{fig:Local_distribution}
\end{figure}

The first two rows isolate the effect of local information under first-order moments. Without local constraints, the worst-case distribution concentrates much of its mass at a small number of support points. Adding local constraints redistributes probability across several support points in both marginals. The outer objective decreases from $-66.12$ to $-68.29$, consistent with the weak expansion of the $\bm x$-feasible region.

The second-order rows show a more selective effect. Second-order information already lowers the outer objective to $-75.91$ and substantially determines the first marginal. Adding local information changes the objective only slightly, to $-75.97$, while still redistributing probability in the second marginal across the remaining admissible locations. Thus, in this example, local information primarily refines distributional features not already determined by the second-order moments. The local constraints therefore exclude concentrated worst-case distributions that satisfy the global moment conditions but are inconsistent with the available local knowledge, while BiCS makes the resulting redistribution explicit through its generated support points and probability weights.

\section{Numerical Experiments} 
\label{sec:results} 

We consider a knapsack problem with $n$ items. Let $J:=[n]$ denote the item index set; throughout the numerical study, $N$ is reserved for the number of empirical sample points. For a set $J_I\subseteq J$ of integer components, let
\[
  \mathcal{X}_{0} := \{\bm x\in[0,1]^n\mid x_j\in\{0,1\}\;\forall j\in J_I\},
  \qquad
  \mathcal{X}_{\mathrm{det}} := \{\bm x\in\mathbb{R}^{n}\mid \bm A\bm x\le \bm b\},
\]
where $\bm A\in\mathbb{R}^{m\times n}$ and $\bm b\in\mathbb{R}^{m}_{+}$. For $\bm c\in\mathbb{R}^{n}_{+}$, the deterministic knapsack is defined as 
\begin{align}
  \max_{\bm x\in\mathbb{R}^{n}}\;\bm c^{\top}\bm x
  \quad\text{s.t.}\quad
  \bm x\in\mathcal{X}_{0}\cap\mathcal{X}_{\mathrm{det}},
  \tag{DET}\label{prob:DET}
\end{align}
Here $J_I=\emptyset$ gives the continuous model, $J_I=J$ gives the pure-integer model, and a proper nonempty subset gives a mixed-integer model.

In this section, we mainly use the following baseline instance as a simple illustrative example. Each item profit is drawn as \(c_j \sim \mathrm{Unif}[0,10]\), which can represent, for instance, expected revenue in thousands of dollars. Each coefficient \(a_{ij}\) of \(\bm A\), which may capture the resource consumption or salvage effect of including item \(j\) in constraint \(i\), is independently set to \(\mathrm{Unif}[0,100]\) (e.g., weight in kilograms or processing time in minutes) with probability \(d_p\), to \(\mathrm{Unif}[-50,0]\) (e.g., a resource refund or reclaimable capacity) with probability \(d_n\), and to zero otherwise, resulting in a \((d_p, d_n)\)-sparse matrix that incorporates both positive and negative impacts. Each capacity limit \(b_i\) is drawn as \(b_i \sim \mathrm{Unif}[0, b_{\max}]\), where \(b_{\max} = 50\,n\,d_p\), representing a total available resource such as budget, weight limit, or machine hours. This simple yet flexible model mirrors the setting in \cite{bertsimas2004price}, and could help us illustrate our framework clearly.

Furthermore, to make this paper self-contained, we provide definitions of the following commonly used ambiguity sets, which are special cases of \(\mathcal{P}\) in~\eqref{p_gm_ambiguity} and will be used in this section.

\begin{definition} The moment-inequality ambiguity set \(\mathcal{P}^{M}\) is defined as:
\begin{align}
    \mathcal{P}^{M} := \left\{ \mathbb{P} \in \mathcal{M}_+(\mathcal{A}, \mathcal{F}) \,\middle|\, 
    \mathbb{E}_{\mathbb P} \left[ 1 \right] = 1, \mathbb{E}_{\mathbb P} \left[ g_t(\bm a) \right] \in [l_t, u_t], \forall t \in T \right\} \label{p_m_ambiguity},
\end{align}
where \(g_t( \bm a)\) denotes a moment function corresponding to each index \(t\) in the index set \(T\), and \(l_t\) and \(u_t\) are the corresponding lower and upper bounds on these moments under candidate probability distributions.
\end{definition}

\begin{definition}\label{p_w_ambiguity} The Wasserstein ambiguity set \(\mathcal{P}^{W}\) is a closed Wasserstein ball with radius $\theta > 0$: 
\begin{subequations}
\begin{align}
    \mathcal{P}^{W} := \left\{ \mathbb{P} \in \mathcal{M}_+(\mathcal{A}, \mathcal{F}) \,\middle|\, 
    \mathbb{E}_{\mathbb P} \left[ 1 \right] = 1, \mathcal{W} (\mathbb{P}, \mathbb{P}_0) \leq \theta \right\}
\end{align}
where the Wasserstein distance \(\mathcal{W} (\mathbb{P}, \mathbb{P}_0)\) between two measures \(\mathbb{P}\) and \(\mathbb{P}_0\) is defined as: 
\begin{equation}
    \mathcal{W} (\mathbb{P}, \mathbb{P}_0) :=  \inf \left\{ \int_{\mathcal{A} \times \mathcal{A}} \| \bm a - \bm a_0\| \, d\Pi(\bm a, \bm a_0) : 
    \begin{aligned}
    \Pi \text{ is a joint distribution of } \bm a \text{ and } \bm a_0  \\
    \text{ with marginals } \mathbb{P} \text{ and } \mathbb{P}_0 \text{, respectively} 
    \end{aligned} 
    \right\}.
\end{equation}
\end{subequations}
More specifically, \( \| \cdot \|\) denotes an arbitrary norm on \(\mathbb{R}^n\). The empirical distribution is constructed from $N$ i.i.d. samples as \(\mathbb{P}_0 := \frac{1}{N} \sum_{j=1}^N \delta_{\bar{\bm a}^{\,0}_j}\), where $\delta_{\bar{\bm a}^{\,0}_j}$ is the Dirac measure concentrated at sample point $\bar{\bm a}^{\,0}_j$.
\end{definition}

All experiments are conducted on a machine with an AMD Ryzen 7 5825U processor, featuring 8 physical cores and 16 logical threads. We use Python 3.11.11 (Anaconda distribution) and Gurobi 11.0.0 as the optimization solver. The time limit for each run is set to 3600 seconds. Convergence is determined by the stated algorithmic stopping criterion.


\subsection{Standard DRO} \label{Numerical-Basic-DRO}
For any ambiguity set \(\mathcal{P}\), the distributionally robust knapsack can be written as
\begin{align}
  \max_{\bm x\in\mathcal{X}_{0}}
  \;\bigl\{\bm c^{\top}\bm x
     \mid
     \bm a_i^\top\bm x+
     \sup_{\mathbb{P}_i\in\mathcal{P}_i}\mathbb{E}_{\mathbb{P}_i}\!\left[\sum_{j\in J}r_j\hat a_{ij}x_j\right]\le b_i,
     \hspace{2mm}\forall i\in[m]
  \bigr\}.
  \tag{Standard-DRO} \label{prob:DRO}
\end{align}
Here $\bm a_i$ is the nominal coefficient vector for row $i$, $\hat a_{ij}=0.01a_{ij}$ is the deviation coefficient, and $\bm r=(r_j)_{j\in J}$ records the continuous deviation fractions.
We examine four ambiguity sets for each row \(i\), where $\mathcal F_i=\mathcal B(\mathcal A_i)$:
\begin{align}
\mathcal{P}_i^{\rm M1}
&=\bigl\{\mathbb{P}\in\mathcal{M}_+(\mathcal{A}_i,\mathcal{F}_i)
    \,\big|\,\mathbb{E}_{\mathbb{P}}[1]=1,\;
    \underline\Gamma_i \le \mathbb{E}_{\mathbb{P}}\bigl[\sum_{j\in J}r_j\bigr]
                   \le \bar\Gamma_i
    \bigr\},\\
\mathcal{P}_i^{\rm M2}
&=\bigl\{\mathbb{P}\in\mathcal{M}_+(\mathcal{A}_i,\mathcal{F}_i)
    \,\big|\,\mathbb{E}_{\mathbb{P}}[1]=1,\;
    \underline\Gamma_i \le \mathbb{E}_{\mathbb{P}}\bigl[\sum_{j\in J}r_j\bigr]
                   \le \bar\Gamma_i,\;
    \underline{\underline\Gamma}_i \le \mathbb{E}_{\mathbb{P}}\bigl[\sum_{j\in J}r_j^2\bigr]
                   \le \bar{\bar\Gamma}_i
    \bigr\},\\
\mathcal{P}_i^{\rm W1}
&=\bigl\{\mathbb{P}\in\mathcal{M}_+(\mathcal{A}_i,\mathcal{F}_i)
    \,\big|\,\mathbb{E}_{\mathbb{P}}[1]=1,\;
    \mathcal W_1(\mathbb P,\hat{\mathbb P}_i)\le\theta
    \bigr\},\\
\mathcal{P}_i^{\rm W2}
&=\bigl\{\mathbb{P}\in\mathcal{M}_+(\mathcal{A}_i,\mathcal{F}_i)
    \,\big|\,\mathbb{E}_{\mathbb{P}}[1]=1,\;
    \mathcal W_2(\mathbb P,\hat{\mathbb P}_i)\le\theta
    \bigr\}.
\end{align} \label{ambiguity_set_numerical}
where $\mathcal A_i=\{\bm r\in[0,1]^{|J|}:\sum_{j\in J}r_j\le\Gamma_i\}$. For empirical samples $\hat{\bm r}_{ik}\in\mathcal A_i$, $k\in[N]$, define $\hat{\mathbb P}_i:=\frac{1}{N}\sum_{k=1}^N\delta_{\hat{\bm r}_{ik}}$. The distances $\mathcal W_1$ and $\mathcal W_2$ are the Wasserstein distances in Definition~\ref{p_w_ambiguity} with $\ell_1$ and $\ell_2$ ground norms, respectively. This model follows the budgeted uncertainty of \cite{bertsimas2004price}: each deviation fraction lies in $[0,1]$, a full deviation changes its nominal coefficient by at most 1\%, and the rowwise total deviation fraction is bounded by $\Gamma_i$. M1 bounds the expected total deviation, while M2 additionally bounds the expected aggregate componentwise second moment $\sum_{j\in J}r_j^2$; Wasserstein sets (W1, W2) remain centered at the empirical distribution $\hat{\mathbb P}_i$. For the ambiguity set parameters, each test group is assigned a different uncertainty budget \(\Gamma_i\) or Wasserstein radius \(\theta\), as reported in the corresponding tables. For moment-inequality ambiguity sets, the normalized first-order moment bounds are drawn as \(\underline{\Gamma}_i/\Gamma_i \sim \mathrm{Unif}(0.2, 0.4)\) and \(\bar{\Gamma}_i/\Gamma_i \sim \mathrm{Unif}(0.6, 0.8)\). The second-order moment bounds are fixed at \(\underline{\underline{\Gamma}}_i = 0.1\,\Gamma_i\) and \(\bar{\bar{\Gamma}}_i = 0.4\,\Gamma_i\). 

\begin{figure}[h!]
    \centering
    \includegraphics[width=\textwidth]{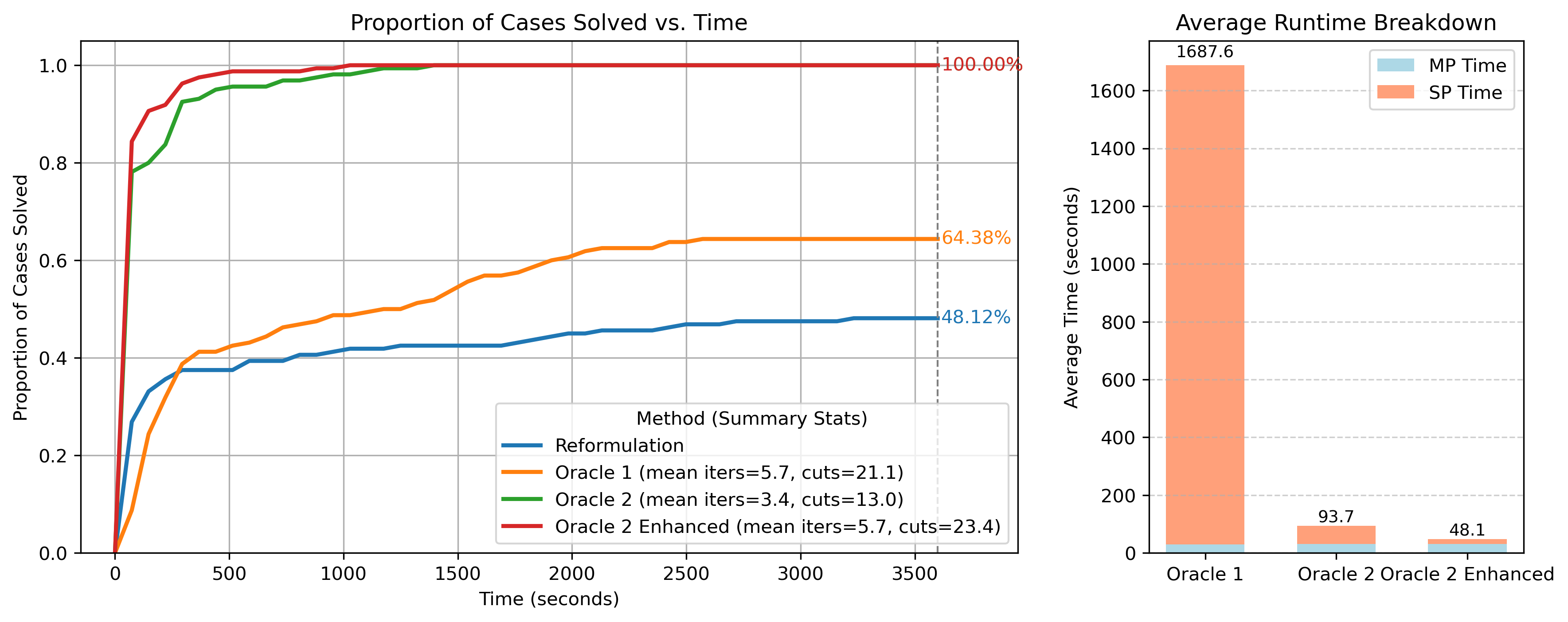}
    \caption{Standard DRO algorithm performance comparison.}
    \label{Basic DRO performance}
\end{figure}

Figure \ref{Basic DRO performance} summarizes the average computational performance across 160 randomly generated cases under the Standard DRO framework. The left panel illustrates the proportion of instances solved over time. As shown, the reformulation approach struggles to scale, solving fewer than half (48.1\%) of the instances within one hour. BiCS with Oracle 1 performs better, solving approximately 64.4\% of the cases, but still fails to converge for more than one-third of the test set. In contrast, BiCS with Oracle 2 and Oracle 2 Enhanced demonstrate strong reliability, successfully solving 100\% of all instances. Both reach full convergence within roughly 1,500 seconds, with Oracle 2 Enhanced achieving faster solution times in most cases.

The right panel decomposes runtime into MP and SP components among the instances solved by each method. For Oracle~1, most reported time is spent in the SP, which involves solving complex nonlinear programs. The detailed results show that Oracle~2 reduces SP time by separating the probability assignment linear program in the PMP from the scenario pricing PSP. All BiCS variants use scenario reduction, while Oracle~2 Enhanced denotes the configuration that additionally applies bound-based stopping. Its reported performance therefore reflects the enhanced configuration as a whole, which can require slightly longer MP time.

From the numerical results in Tables \ref{tab:DRO_first_order_moment}--\ref{tab:DRO_second_order_was}, distinct computational patterns emerge across the solution schemes. For the first-order moment sets (Table \ref{tab:DRO_first_order_moment}), the reformulation remains tractable for instances with continuous variables, but for pure-integer instances it becomes a large MILP with weak relaxations, resulting in extensive branch-and-bound exploration and frequent timeouts. In BiCS, decision variable integrality remains in the master problem; after fixing $\bm x$, Oracle~2 separates the probability assignment PMP from the scenario pricing PSP. This structure yields substantially better performance for the tested IP instances.

For the second-order moment sets (Table \ref{tab:DRO_second_order_moment}), no reformulation is available in our setting, because the second-order term induces a nonconvex objective in the sample-space dual. The difference between Oracle~1 and Oracle~2 follows directly from their subproblem structures. Oracle~1 repeatedly solves large trilinear programs involving both second-order and probability variables, making each iteration computationally expensive. Oracle~2, by contrast, separates the probability assignment PMP from the second-order PSP, avoiding the direct coupling between probabilities and scenarios handled by Oracle~1 and yielding substantially lower solution times in the tested instances.

The Wasserstein ambiguity sets exhibit similar behavior (Tables \ref{tab:DRO_first_order_was}--\ref{tab:DRO_second_order_was}). Under the $\ell_1$-norm Wasserstein metric, the reformulation remains competitive for instances with continuous variables but becomes difficult to solve for pure-integer instances, for reasons analogous to the first-order moment case. Under the $\ell_2$-norm Wasserstein metric, the formulation becomes a high-dimensional MISOCP, and Oracle~1 must repeatedly solve large trilinear subproblems, causing both approaches to scale poorly and often exceed the time limit. Oracle~2 and its enhanced variant remain more stable by separating probability assignment from scenario generation; Oracle~2 Enhanced$^*$ denotes the same enhanced configuration with empirical-sample initialization.

\subsection{Almost Sure DRO} \label{numerical:AS-DRO}

We evaluate AS-DRO using the same controlled knapsack construction as Standard DRO, retaining the data generation procedure and ambiguity set families while changing the risk criterion. Here $\bm r\in\mathcal A_i$ is the deviation pattern for row $i$, and AS-DRO (\ref{eq:ASDRO-sup}) becomes

\begin{align}
  \max_{\bm x\in\mathcal{X}_{0}}
  \;\left\{ \bm c^{\top}\bm x
     \mid
     \sup_{\mathbb{P}\in\mathcal{P}_i} \mathbb{E}_{\mathbb{P}}\!\left[t_i(\bm r,\bm x)\right] \leq 0,
     \quad t_i(\bm r,\bm x)=\left[\bm a_i^\top\bm x+\sum_{j\in J}r_j\hat a_{ij}x_j-b_i\right]_+,
     \hspace{2mm}\forall i\in[m]
  \right\}.
  \tag{AS-DRO} \label{prob:AS-DRO}
\end{align}

All remaining instance parameters follow Section~\ref{Numerical-Basic-DRO}. Figure~\ref{ASDRO performance} summarizes performance across 160 instances. The reformulation solves 35.6\%, while Oracle~1, Oracle~2, and Oracle~2 Enhanced each solve 100\%. The average total times of Oracle~1, Oracle~2, and Oracle~2 Enhanced are 205.7, 1.6, and 0.6 seconds, respectively.

\begin{figure}[h!]
    \centering
    \includegraphics[width=\textwidth]{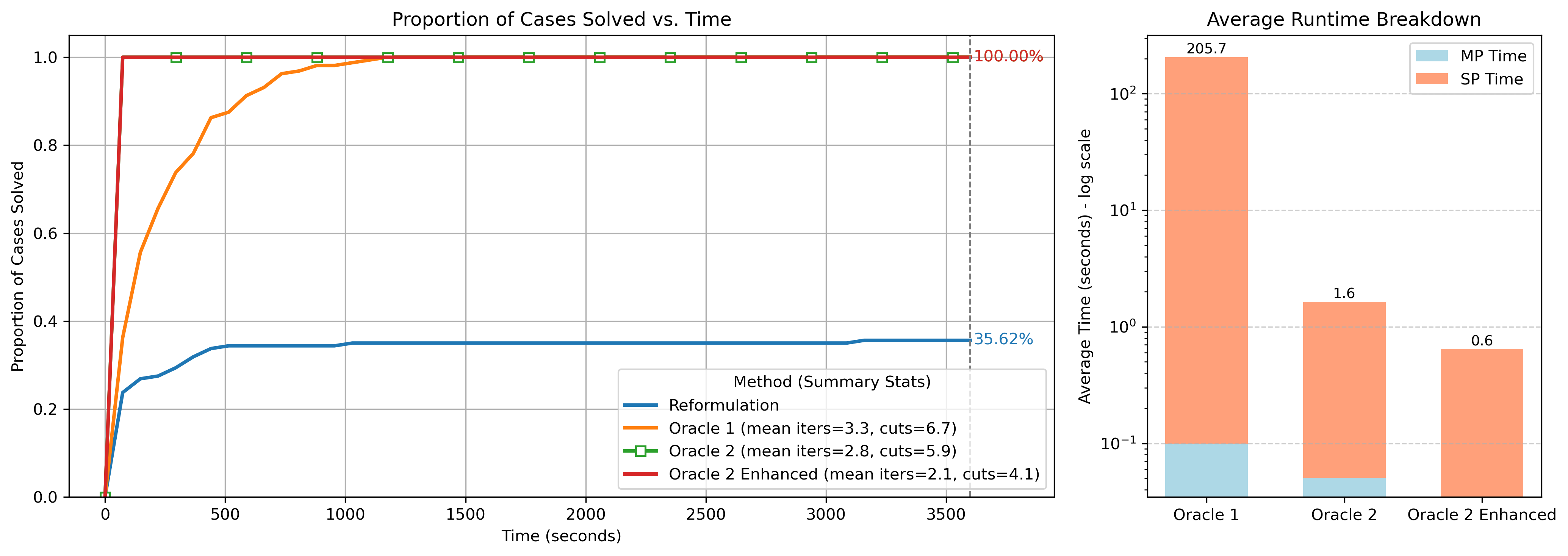}
    \caption{AS-DRO algorithm performance comparison.}
    \label{ASDRO performance}
\end{figure}

The moment results in Tables~\ref{tab:ASDRO_first_order_moment}--\ref{tab:ASDRO_second_order_moment} show a clear contrast. In the M1 cases, the reformulation times out for all displayed continuous cases and for the larger pure-integer cases; while in the M2 cases, the reformulation is still not applicable. Across the displayed moment cases, the maximum total times of Oracle~2 and Oracle~2 Enhanced are 2.15 and 0.85 seconds, respectively. Oracle~1 remains slower because its direct subproblem retains the nonlinear coupling between probabilities and scenarios, whereas Oracle~2 separates probability assignment from scenario generation.

The Wasserstein results in Tables~\ref{tab:ASDRO_first_order_was}--\ref{tab:ASDRO_second_order_was} show the same separation under the stricter positive-part criterion. For the $\ell_1$ cases, the maximum total times of Oracle~2 and Oracle~2 Enhanced are 12.80 and 5.18 seconds; for the $\ell_2$ cases, they are 3.00 and 1.36 seconds. Across all displayed AS-DRO configurations, Oracle~2 Enhanced is faster than Oracle~2. The decomposition controls the scenario search without changing the ambiguity set, while the positive part objective preserves the AS-DRO violation criterion.

\subsection{DRCCP}\label{numerical:DRCCP}

We evaluate DRCCP using the same controlled knapsack settings while changing the risk criterion to grouped chance constraints. The test cases cover individual, joint, and combinatorial chance constraint structures, all handled by the same BiCS framework. For each $m\in[M]$, let $\mathcal A_m:=\prod_{i\in[I_m]}\mathcal A_{mi}$, where each $\mathcal A_{mi}$ is constructed as in Section~\ref{Numerical-Basic-DRO}, let $\bm r_m=(\bm r_{mi})_{i\in[I_m]}\in\mathcal A_m$, and define
$f_{mi}(\bm r_m,\bm x):=\bm a_{mi}^\top\bm x+\sum_{j\in J}r_{mij}\hat a_{mij}x_j$. The test model is

\begin{align}
\max_{\bm x\in\mathcal X_0}\quad & \bm c^\top\bm x \nonumber\\
\text{s.t.}\quad
&\inf_{\mathbb P_m\in\mathcal P_m}
\mathbb P_m\!\left(
\sum_{i=1}^{I_m}\mathbf 1_{\{f_{mi}(\bm r_m,\bm x)\le b_{mi}\}}
\ge h_m
\right)
\ge1-\epsilon_m,
\quad \forall m\in[M].
\tag{DRCCP}  \label{prob:DRCCP}
\end{align}

\begin{figure}[h!]
    \centering
    \includegraphics[width=1\textwidth]{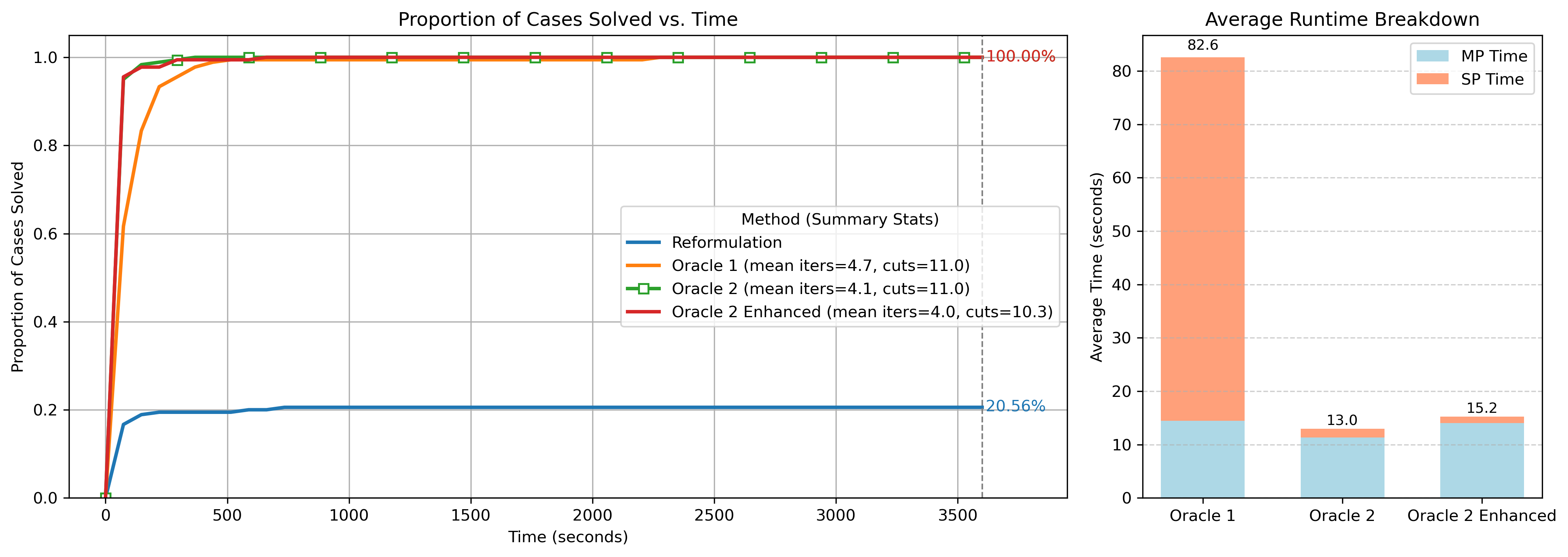}
    \caption{DRCCP algorithm performance comparison.}
    \label{DRCCP performance}
\end{figure}

Figure~\ref{DRCCP performance} summarizes performance across 180 instances. The reformulation is unavailable for the combinatorial M1 and W1 cases, all M2 cases, and the joint and combinatorial W2 cases; over the full test set, it solves 20.56\%, while all three BiCS variants solve 100\%. Average total times are 82.6 seconds for Oracle~1, 13.0 seconds for Oracle~2, and 15.2 seconds for Oracle~2 Enhanced. The enhanced variant is not uniformly faster since reductions in SP work can sometimes be offset by additional MP time.

The moment results in Tables~\ref{tab:DRCCP_first_order_moment}--\ref{tab:DRCCP_second_order_moment} identify where this difference arises. In the dense first-order combinatorial case, SP times are 458.66, 3.79, and 0.06 seconds for Oracle~1, Oracle~2, and Oracle~2 Enhanced. In the second-order cases, the corresponding SP ranges are 53.21--161.75, 0.28--2.69, and 0.19--2.28 seconds. Even when SP time is small, total time reaches 43.82 seconds in a tested combinatorial case because the MP retains the grouped cardinality logic.

The Wasserstein results in Tables~\ref{tab:DRCCP_first_order_was}--\ref{tab:DRCCP_second_order_was} show how separating probability assignment from scenario generation reduces the computational effort under both ground norms. Oracle~2 Enhanced is faster in 32 of the 36 tested DRCCP configurations, but a few large Wasserstein cases incur substantially more MP time. The MP can therefore become the computational bottleneck, especially under grouped satisfaction events, so the enhanced variant need not improve average total time.

\section{Conclusion}\label{sec:conclusion}

In this paper, we studied DRO from a primal perspective that keeps the scenario points and probability weights of a worst-case distribution explicit. This primal viewpoint clarifies how different protection criteria act on the same distributional object: Standard DRO bounds a worst-case expectation, AS-DRO requires feasibility almost surely under every admissible distribution, and DRCCP controls worst-case satisfaction probabilities. It also allows introducing additional local distributional information to refine an ambiguity set without imposing a parametric family.

Building on this perspective, we developed the BiCS framework, which constructs pooled parametric distribution cuts by identifying finite supports, pooling them, and reoptimizing probability weights as the master decision changes. A central contribution is the analysis of BiCS over closed, possibly unbounded sample spaces, where the traditional compactness-based arguments are no longer available. Under the regularity conditions, we established the validity of the generated cuts and the optimality and convergence of the overall framework in this setting. We further introduced scenario reduction and bound-based stopping to control master problem growth and subproblem effort, respectively. The same construction was adapted to AS-DRO, to variants of DRCCPs, and to DRO models whose ambiguity sets incorporate local information.

Numerical experiments with moment and Wasserstein ambiguity sets show how these structural choices affect computation. Across the tested Standard DRO, AS-DRO, and DRCCP instances, Oracle~2 and its enhanced variant, which separate probability assignment from scenario generation, were the most reliable BiCS implementations. The enhanced variant was faster in most Standard DRO cases and all tested AS-DRO cases, whereas grouped satisfaction logic in DRCCP could shift computation to the master problem and prevent a uniform reduction in total time. The local information study shows how the primal construction makes changes in the worst-case distribution directly visible. A natural next step is to establish convergence guarantees for BiCS when the ambiguity set or sample space depends on the decision.

\begin{landscape}
\begin{table}[ht!]
    \centering
    \caption{Performance metrics for Standard DRO with First-order Moment Ambiguity Sets}
    \label{tab:DRO_first_order_moment}
    \renewcommand{\arraystretch}{1.0} 
    \setlength{\tabcolsep}{4pt}       

    \begin{threeparttable}
    \begin{tabular}{cccc cc ccccc ccccc ccccc}
    \toprule
    \multirow{2}{*}{Case} 
      & \multirow{2}{*}{$d(A)$} 
      & \multirow{2}{*}{$\Gamma$} 
      & \multirow{2}{*}{Var.} 
      & \multicolumn{2}{c}{Reformulation} 
      & \multicolumn{5}{c}{Oracle 1} 
      & \multicolumn{5}{c}{Oracle 2} 
      & \multicolumn{5}{c}{Oracle 2 Enhanced} \\
    \cmidrule(lr){5-6} \cmidrule(lr){7-11} \cmidrule(lr){12-16} \cmidrule(lr){17-21}
      &  &  & 
      & Gap & \cellcolor{gray!20}$t_{\mathrm{total}}$ 
      & Iter. & Cuts & $|\hat{\mathcal{A}}|$
      & $t_{\mathrm{sp}}$ & \cellcolor{gray!20}$t_{\mathrm{total}}$ 
      & Iter. & Cuts & $|\hat{\mathcal{A}}|$
      & $t_{\mathrm{sp}}$ & \cellcolor{gray!20}$t_{\mathrm{total}}$ 
      & Iter. & Cuts & $|\hat{\mathcal{A}}|$ 
      & $t_{\mathrm{sp}}$ & \cellcolor{gray!20}$t_{\mathrm{total}}$\\
    \midrule

    1  & D & 5  & M  & 0  & \cellcolor{gray!20}16.8  & 
     2.2  & 11.4  & 11.4  & 70.5  & \cellcolor{gray!20}75.1  &
     2.2  & 11.4  & 16.2  & 32.9  & \cellcolor{gray!20}37.6  &
     6.4  & 50.8  & 50.8  & 18.3  & \cellcolor{gray!20}24.1  \\
    2  & D & 40 & M & 0  & \cellcolor{gray!20}16.2  & 
     3.2  & 21.8  & 21.8  & 136.6 & \cellcolor{gray!20}141.5 &
     3.2  & 21.8  & 23.4  & 22.4  & \cellcolor{gray!20}27.3  &
     3.6  & 22.2  & 22.2  & 10.3  & \cellcolor{gray!20}15.3  \\
    3  & D & 5  & I   & 1.53\%  & \cellcolor{gray!20}T 
    & 3.2  & 3.4   & 3.4   & 105.1 & \cellcolor{gray!20}163.7 
    & 3.2  & 3.4   & 4.6   & 48.9  & \cellcolor{gray!20}113.4 
    & 3.2  & 3.4   & 4.6   & 9.3   & \cellcolor{gray!20}72.0  \\
    4  & D & 40 & I   & 0.68\%  & \cellcolor{gray!20}T 
    & 3.8  & 6.2   & 6.2   & 123.7 & \cellcolor{gray!20}174.5 
    & 4.0  & 6.8   & 8.4   & 22.9  & \cellcolor{gray!20}82.3  
    & 4.0  & 6.8   & 7.8   &11.2  & \cellcolor{gray!20}71.3  \\
    5  & S  & 5  & M  & 0  & \cellcolor{gray!20}4.6    
    & 3.4  & 40.2  & 40.2  & 32.8    & \cellcolor{gray!20}34.3  
    & 3.4  & 39.6  & 55.2  & 14.7  & \cellcolor{gray!20}16.2 
    & 7.4  & 134.6 & 134.6 & 7.2   & \cellcolor{gray!20}9.3  \\
    6  & S & 40 & M  & 0  & \cellcolor{gray!20}4.5   
     & 3.4  & 42.8  & 45.6  & 35.0   & \cellcolor{gray!20}36.5  
     & 3.0  & 41.6  & 63.2  & 5.52   & \cellcolor{gray!20}7.0  
     & 4.4  & 60.8  & 60.8  & 4.15   & \cellcolor{gray!20}8.2  \\
    7  & S  & 5  & I   & 0.94\%  & \cellcolor{gray!20}T 
    & 6.8  & 14.0  & 14.0  & 56.3  & \cellcolor{gray!20}356.2 
    & 6.8  & 14.0  & 20.6  & 30.3  & \cellcolor{gray!20}362.3 
    & 7.6  & 15.2  & 23.8  & 7.2   & \cellcolor{gray!20}420.2 \\
    8  & S  & 40 &  I    & 0.12\%  & \cellcolor{gray!20}T      
    & 7.0  & 15.6  & 16.4  & 73.4  & \cellcolor{gray!20}442.3 
    & 7.2  & 16.8  & 26.8  & 15.7  & \cellcolor{gray!20}455.2 
    & 7.0  & 16.2  & 23.0  & 6.5   & \cellcolor{gray!20}431.2  \\

    \bottomrule
    \end{tabular}
    \end{threeparttable}
\end{table}

\begin{table}[ht!]
    \centering
    \caption{Performance metrics for Standard DRO with Second-order Moment Ambiguity Sets}
    \label{tab:DRO_second_order_moment}
    \renewcommand{\arraystretch}{1.0} 
    \setlength{\tabcolsep}{4.5pt}

    \begin{threeparttable}
    \begin{tabular}{ccccc ccccc ccccc ccccc}
    \toprule
    \multirow{2}{*}{Case} 
      & \multirow{2}{*}{Size} 
      & \multirow{2}{*}{$d(A)$} 
      & \multirow{2}{*}{$\Gamma$} 
      & \multirow{2}{*}{Var.} 
      & \multicolumn{5}{c}{Oracle 1} 
      & \multicolumn{5}{c}{ Oracle 2} 
      & \multicolumn{5}{c}{Oracle 2 Enhanced} \\
    \cmidrule(lr){6-10} \cmidrule(lr){11-15} \cmidrule(lr){16-20} 
      &  &  &  &
      & Iter. & Cuts & $|\hat{\mathcal{A}}|$
      & $t_{\mathrm{sp}}$ & \cellcolor{gray!20}$t_{\mathrm{total}}$ & Iter. & Cuts & $|\hat{\mathcal{A}}|$
      & $t_{\mathrm{sp}}$ & \cellcolor{gray!20}$t_{\mathrm{total}}$ & Iter. & Cuts & $|\hat{\mathcal{A}}|$ &
      $t_{\mathrm{sp}}$ & \cellcolor{gray!20}$t_{\mathrm{total}}$ \\
    \midrule

   1  & (10,3) & D  & 2  & I  
   & 2.4  & 3.0  & 15.0  & 291.45  & \cellcolor{gray!20}291.45  
   & 2.0  & 2.4  & 7.2  & 0.18  & \cellcolor{gray!20}0.18  
   & 2.0  & 2.4  & 4.8  & 0.01  & \cellcolor{gray!20}0.02  \\
    2  & (10,3) & D & 2  & M  
    & 30.0 & 43.0 & 215.0 & T       & \cellcolor{gray!20}T      
    & 3.6  & 4.8  & 13.4 & 0.26  & \cellcolor{gray!20}0.27  
    & 19.4 & 23.8 & 26.2  & 0.08  & \cellcolor{gray!20}0.12  \\
    3  & (10,3) & S  & 2  & I  
    & 2.2  & 2.8  & 14.0  & 251.19  & \cellcolor{gray!20}251.20  
    & 2.0  & 2.6  & 7.0   & 0.11  & \cellcolor{gray!20}0.12  
    & 2.0  & 2.6  & 2.8   & 0.01  & \cellcolor{gray!20}0.02  \\
    4  & (10,3) & S & 2  & M  
    & 30  & 66.4 & 332.0 & T & \cellcolor{gray!20}T 
    & 3.6  & 5.8  & 13.4 & 0.16  & \cellcolor{gray!20}0.17 
    & 13.8 & 26.6 & 26.2 & 0.07  & \cellcolor{gray!20}0.14  \\
    5  & (100,20) & D  & 10  & I  
    & 2.0  & 3.2 & 16.0  & 1517.79 & \cellcolor{gray!20}1518.32  
     & 2.2  & 3.2  & 8.4 & 3.06  & \cellcolor{gray!20}3.26 
     & 2.2  & 3.4  & 5.0 & 0.23  & \cellcolor{gray!20}0.46 \\
    6  & (100,20) & D & 10  & M 
    & 5.2  & 18.4 & 92.0 & T       & \cellcolor{gray!20}T     
     & 4.8  & 14.4 & 32.6 & 7.20  & \cellcolor{gray!20}7.29 
     & 19.6 & 57.4 & 57.8 & 2.18  & \cellcolor{gray!20}2.43 \\
    7  & (100,20) & S  & 10  & I  
    & 2.6  & 3.8  & 16.0  & 1854.52 & \cellcolor{gray!20}1854.52 
    & 2.6  & 3.8  & 7.6  & 2.09  & \cellcolor{gray!20}2.29  
    & 2.4  & 3.6  & 4.4 & 0.11  & \cellcolor{gray!20}0.31  \\
    8  & (100,20) & S & 10  & M  
    & 4.6  & 28.8 & 144.0 & T       & \cellcolor{gray!20}T     
    & 3.6  & 16.6 & 34.8 & 2.90  & \cellcolor{gray!20}2.95  
    & 20.6 & 77.0 & 77.0  & 1.04  & \cellcolor{gray!20}1.24  \\

    \bottomrule
    \end{tabular}

    \begin{tablenotes}\footnotesize
    \item \textbf{Note:} For Tables~\ref{tab:DRO_first_order_moment} and~\ref{tab:DRO_second_order_moment}, T indicates a timeout at 3600 seconds and each row reports averages over 5 random seeds. D denotes 75\% positive and 5\% negative entries, while S denotes 40\% positive entries. The first-order table has 600 variables and 300 constraints; in the second-order table, Size reports (number of variables, number of constraints). The parameter $\Gamma$ is the rowwise uncertainty budget. In the Var. column, M denotes instances with continuous decision variables and I denotes pure-integer instances.
    \end{tablenotes}
    
    \end{threeparttable}
\end{table}

\begin{table}[ht!]
    \centering
    \caption{Performance metrics for Standard DRO with $\ell_1$-norm Wasserstein Ambiguity Sets}
    \label{tab:DRO_first_order_was}
    \renewcommand{\arraystretch}{1.0}
    \setlength{\tabcolsep}{2.5pt}

    \begin{threeparttable}
    \begin{tabular}{ccccccccccccccccccccc}
    \toprule
    \multirow{2}{*}{Case} 
      & \multirow{2}{*}{$\theta$} 
      & \multirow{2}{*}{$N$} 
      & \multirow{2}{*}{Var.} 
      & \multicolumn{2}{c}{Reformulation} 
      & \multicolumn{5}{c}{Oracle 1} 
      & \multicolumn{5}{c}{Oracle 2} 
      & \multicolumn{5}{c}{Oracle~2 Enhanced$^*$} \\
    \cmidrule(lr){5-6} \cmidrule(lr){7-11} \cmidrule(lr){12-16} \cmidrule(lr){17-21}
      &  &  & 
      & Gap & \cellcolor{gray!20}$t_{\mathrm{total}}$
      & Iter. & Cuts & $|\hat{\mathcal{A}}|$
      & $t_{\mathrm{sp}}$ & \cellcolor{gray!20}$t_{\mathrm{total}}$ & Iter. & Cuts & $|\hat{\mathcal{A}}|$
      & $t_{\mathrm{sp}}$ & \cellcolor{gray!20}$t_{\mathrm{total}}$ & Iter. & Cuts & $|\hat{\mathcal{A}}|$ 
      & $t_{\mathrm{sp}}$ & \cellcolor{gray!20}$t_{\mathrm{total}}$ \\
    \midrule
    1 & 1    & 30  & I & 0       & \cellcolor{gray!20}136.6 
    & 2.2 & 12.6 & 526.6   & 103.1 & \cellcolor{gray!20}108.4 
    & 2.0 & 12.4 & 274.2  & 20.3 & \cellcolor{gray!20}24.4  
     & 1.2 & 0.2 & 6.0 & 4.3  & \cellcolor{gray!20}6.9  \\
    2 & 1    & 300 & I & 0.6\%   & \cellcolor{gray!20}T 
     & 2.0 & 12.6 & 5453.4  & 1453.8 & \cellcolor{gray!20}1514.6
    & 2.0 & 12.6 & 2519.2  & 233.0 & \cellcolor{gray!20}275.1 
     & 1.0 & 0.0 & 0 & 38.2 & \cellcolor{gray!20}54.8 \\
    3 & 0.01 & 30  & I & 0       & \cellcolor{gray!20}299.5 
     & 2.0 & 12.4 & 541.0   & 105.1 & \cellcolor{gray!20}109.2
     & 2.0 & 12.4 & 256.4   & 13.7 & \cellcolor{gray!20}17.3
     & 1.0 & 0.0 & 0 & 3.5  & \cellcolor{gray!20}5.8  \\
    4 & 0.01 & 300 & I & 0.8\%   & \cellcolor{gray!20}T 
    & 2.0 & 12.4 & 7424.4  & 611.0 & \cellcolor{gray!20}699.1 
    & 2.0 & 12.4 & 2472.2  & 174.7 & \cellcolor{gray!20}222.9 
     & 1.0 & 0.0 & 0 & 37.3 & \cellcolor{gray!20}54.0 \\
    5 & 1    & 30  & M & 0       & \cellcolor{gray!20}5.0   
    & 3.0 & 19.2 & 758.0   & 110.7 & \cellcolor{gray!20}112.5 
    & 6.0 & 31.0 & 479.6   & 52.2 & \cellcolor{gray!20}54.2 
     & 7.0 & 44.0 & 1057.6 & 20.9 & \cellcolor{gray!20}23.0  \\
    6 & 1    & 300 & M & 0       & \cellcolor{gray!20}101.1 
    & 3.0 & 18.8 & 8158.6  & 1886.7 & \cellcolor{gray!20}1905.4 
    & 10.4 & 39.0 & 4773.2  & 955.7 & \cellcolor{gray!20}983.3 
    & 7.0 & 43.0 & 10318.6 & 217.0 & \cellcolor{gray!20}242.5 \\
    7 & 0.01 & 30  & M & 0   & \cellcolor{gray!20}4.9   
    & 3.4 & 18.6 & 979.8   & 179.2 & \cellcolor{gray!20}181.1 
    & 3.0 & 16.4 & 260.4   & 18.6 & \cellcolor{gray!20}20.4 
     & 4.0 & 20.4 & 588.4 & 11.8 & \cellcolor{gray!20}13.7 \\
    8 & 0.01 & 300 & M & 0      & \cellcolor{gray!20}86.8  
    & 3.2 & 20.6 & 11995.2 & 921.6 & \cellcolor{gray!20}943.9 
    & 3.0 & 16.4 & 2492.8  & 250.9 & \cellcolor{gray!20}269.1 
    & 4.0 & 18.6 & 5385.4 & 119.8 & \cellcolor{gray!20}138.8 \\
    \bottomrule
    \end{tabular}

    \begin{tablenotes}\footnotesize
    \item \textbf{Note:} T indicates a timeout at 3600 seconds; each row reports averages over 5 random seeds. All cases have 150 variables and 20 constraints, $\Gamma=30$, and $(d_p,d_n)=(0.3,0.1)$. $\theta$ is the Wasserstein radius and $N$ is the number of empirical sample points. In the Var. column, M denotes instances with continuous decision variables and I denotes pure-integer instances. The superscript $^*$ indicates that the empirical points defining $\hat{\mathbb P}_i$ are also used to initialize the scenario pool; this warm start does not otherwise alter the ambiguity set.
    \end{tablenotes}
    \end{threeparttable}
\end{table}

\begin{table}[ht!]
    \centering
    \caption{Performance metrics for Standard DRO with $\ell_2$-norm Wasserstein Ambiguity Sets}
    \label{tab:DRO_second_order_was}
    \renewcommand{\arraystretch}{1.0}
    \setlength{\tabcolsep}{2.5pt}

    \begin{threeparttable}
    \begin{tabular}{ccccccccccccccccccccc}
    \toprule
    \multirow{2}{*}{Case} 
      & \multirow{2}{*}{$\theta$} 
      & \multirow{2}{*}{$N$} 
      & \multirow{2}{*}{Var.} 
      & \multicolumn{2}{c}{Reformulation} 
      & \multicolumn{5}{c}{Oracle 1} 
      & \multicolumn{5}{c}{Oracle 2} 
      & \multicolumn{5}{c}{Oracle~2 Enhanced$^*$} \\
    \cmidrule(lr){5-6} \cmidrule(lr){7-11} \cmidrule(lr){12-16} \cmidrule(lr){17-21}
      &  &  & 
      & Gap & \cellcolor{gray!20}$t_{\mathrm{total}}$
      & Iter. & Cuts & $|\hat{\mathcal{A}}|$
      & $t_{\mathrm{sp}}$ & \cellcolor{gray!20}$t_{\mathrm{total}}$ & Iter. & Cuts & $|\hat{\mathcal{A}}|$
      & $t_{\mathrm{sp}}$ & \cellcolor{gray!20}$t_{\mathrm{total}}$ & Iter. & Cuts & $|\hat{\mathcal{A}}|$ & $t_{\mathrm{sp}}$ & \cellcolor{gray!20}$t_{\mathrm{total}}$ \\
    \midrule
    1 & 1    & 30  & I & 0.27\% & \cellcolor{gray!20}T      
    & 5.4 & 11.2 & 442.0  & T & \cellcolor{gray!20}T 
    & 2.6 & 1.8 & 19.0 & 6.0 & \cellcolor{gray!20}7.0 
    & 2.6 & 2.0 & 78.0 & 1.2 & \cellcolor{gray!20}4.0 \\
    2 & 1    & 100 & I & 5.68\% & \cellcolor{gray!20}T      
    & 5.4 & 13.8 & 1859.6 & T & \cellcolor{gray!20}T 
    & 2.6 & 1.8 & 61.8 & 19.5 & \cellcolor{gray!20}22.0 
    & 2.6 & 2.0 & 239.6 & 3.9 & \cellcolor{gray!20}11.9 \\
    3 & 0.01 & 5   & I & 0 & \cellcolor{gray!20}29.9   
    & 6.0 & 11.8 & 110.8  & T & \cellcolor{gray!20}T 
    & 1.0 & 0.8 & 2.2 & 0.7 & \cellcolor{gray!20}1.0  
    & 1.0 & 0.0 & 0.0 & 0.1 & \cellcolor{gray!20}0.4 \\
    4 & 0.01 & 30  & I & 0.16\% & \cellcolor{gray!20}T      
    & 6.0 & 17.7 & 733.0  & T & \cellcolor{gray!20}T 
    & 3.7 & 3.7 & 15.0 & 5.0 & \cellcolor{gray!20}6.0
    & 1.0 & 0.0 & 0.0 & 1.0 & \cellcolor{gray!20}2.1 \\
    5 & 1    & 30  & M & 0.10\% & \cellcolor{gray!20}T      
    & 6.0 & 36.0 & 1421.0 & T & \cellcolor{gray!20}T 
    & 4.0 & 20.0 & 66.7 & 4.1 & \cellcolor{gray!20}4.8 
    & 6.0 & 24.3 & 700.7 & 2.5 & \cellcolor{gray!20}7.4 \\
    6 & 1    & 100 & M & 0.31\% & \cellcolor{gray!20}T      
    & 6.0 & 36.0 & 5373.3 & T & \cellcolor{gray!20}T 
    & 6.0 & 219.0 & 0.0 & 14.0 & \cellcolor{gray!20}16.2 
    & 5.7 & 22.0 & 2126.7 & 7.2 & \cellcolor{gray!20}23.2 \\
    7 & 0.01 & 5   & M & 0 & \cellcolor{gray!20}20.2   
    & 6.0 & 37.0 & 314.4  & T & \cellcolor{gray!20}T 
    & 2.7 & 7.7 & 9.0  & 0.8 & \cellcolor{gray!20}1.0 
    & 5.3 & 24.3 & 119.7 & 0.4 & \cellcolor{gray!20}1.2 \\
    8 & 0.01 & 30  & M & 0.01\% & \cellcolor{gray!20}T   
    & 6.0 & 38.0 & 1469.0 & T & \cellcolor{gray!20}T 
    & 3.0 & 9.7 & 21.0 & 5.0 & \cellcolor{gray!20}5.7 
    & 5.0 & 22.7 & 662.0 & 2.0 & \cellcolor{gray!20}6.3 \\
    \bottomrule
    \end{tabular}

    \begin{tablenotes}\footnotesize
    \item \textbf{Note:} T indicates a timeout at 3600 seconds; each row reports averages over 5 random seeds. All cases have 80 variables and 15 constraints, $\Gamma=10$, and $(d_p,d_n)=(0.3,0.1)$. $\theta$ is the Wasserstein radius and $N$ is the number of empirical sample points. In the Var. column, M denotes instances with continuous decision variables and I denotes pure-integer instances. The superscript $^*$ indicates that the empirical points defining $\hat{\mathbb P}_i$ are also used to initialize the scenario pool; this warm start does not otherwise alter the ambiguity set.
    \end{tablenotes}
    \end{threeparttable}
\end{table}

\end{landscape}

\begin{landscape}

\begin{table}[ht!]
    \centering
    \caption{Performance metrics for AS-DRO with First-order Moment Ambiguity Sets}
    \label{tab:ASDRO_first_order_moment}
    \renewcommand{\arraystretch}{1.0}
    \setlength{\tabcolsep}{2pt}

    \begin{threeparttable}
    \begin{tabular}{ccccc cc ccccc ccccc ccccc}
    \toprule
    \multirow{2}{*}{Case} 
      & \multirow{2}{*}{Size} 
      & \multirow{2}{*}{$d(A)$} 
      & \multirow{2}{*}{$\Gamma$} 
      & \multirow{2}{*}{Var.} 
      & \multicolumn{2}{c}{Reformulation} 
      & \multicolumn{5}{c}{Oracle 1} 
      & \multicolumn{5}{c}{Oracle 2} 
      & \multicolumn{5}{c}{Oracle 2 Enhanced} \\
      \cmidrule(lr){6-7} \cmidrule(lr){8-12} \cmidrule(lr){13-17} \cmidrule(lr){18-22}
      &  &  &  &
      & Gap & \cellcolor{gray!20}$t_{\mathrm{total}}$
      & Iter. & Cuts & $|\hat{\mathcal{A}}|$
      & $t_{\mathrm{sp}}$ & \cellcolor{gray!20}$t_{\mathrm{total}}$ & Iter. & Cuts & $|\hat{\mathcal{A}}|$
      & $t_{\mathrm{sp}}$ & \cellcolor{gray!20}$t_{\mathrm{total}}$ & Iter. & Cuts & $|\hat{\mathcal{A}}|$ 
      & $t_{\mathrm{sp}}$ & \cellcolor{gray!20}$t_{\mathrm{total}}$ \\
    \midrule

    1 & (20,5)   & D  & 2  & M & 31.69\% & \cellcolor{gray!20}T 
    & 4.8 & 11.6 & 32.0 & 22.7  & \cellcolor{gray!20}22.8 
    & 3.0 & 6.8 & 13.4 &  0.10  & \cellcolor{gray!20}0.12
    & 3.0 & 6.8 & 6.8 & 0.04  & \cellcolor{gray!20}0.07 \\
    2 & (20,5)   & D  & 2  & I & 0 & \cellcolor{gray!20}3.0 
    &3.2  & 7.2 & 21.2 & 20.3  & \cellcolor{gray!20}20.3 
    & 2.0 & 4.4 & 8.8 & 0.06  & \cellcolor{gray!20}0.07 
    & 2.0 & 4.4 & 4.4 & 0.04 & \cellcolor{gray!20}0.04 \\
    3 & (20,5)   & S  & 2  & M & 23.11\% & \cellcolor{gray!20}T 
    & 4.2 & 8.8 & 24.6 & 6.4 & \cellcolor{gray!20}6.4 
    & 3.2 & 6.2 & 11.6 & 0.07 & \cellcolor{gray!20}0.09 
    & 3.4 & 6.8 & 6.8 & 0.04 & \cellcolor{gray!20}0.05 \\
    4 & (20,5)   & S  & 2  & I & 0 & \cellcolor{gray!20}1.4 
    & 2.6 & 4.6 & 13.4 & 5.8 & \cellcolor{gray!20}5.8   
     & 2.0   & 4.0 & 8.0 & 0.05   & \cellcolor{gray!20}0.06 
    & 2.0   & 4.0  & 4.0  & 0.02 & \cellcolor{gray!20}0.03  \\
    5 & (100,30) & D  & 10 & M & 491.94\% & \cellcolor{gray!20}T 
    & 3.8 & 9.0 & 20.6 & 23.4 & \cellcolor{gray!20}23.5 
    & 3.4 & 8.6 & 12.0  & 0.81 & \cellcolor{gray!20}0.92
    & 2.4 & 5.4 & 5.4 & 0.43 & \cellcolor{gray!20}0.53 \\
    6 & (100,30) & D  & 10 & I & 26.56\% & \cellcolor{gray!20}T 
    & 1.6  & 1.0 & 2.6  & 1.5 & \cellcolor{gray!20}1.7 
    & 1.6 & 1.0 & 1.4  & 0.30 & \cellcolor{gray!20}0.53 
    & 1.6 & 1.0 & 1.0 & 0.27& \cellcolor{gray!20}0.43 \\
    7 & (100,30) & S  & 10 & M & 199.0\% & \cellcolor{gray!20}T 
    & 4.2 & 15.6 & 33.0 & 10.8 & \cellcolor{gray!20}10.9 
    & 4.2 & 15.4 & 20.4 & 0.71 & \cellcolor{gray!20}0.79 
    & 2.8 & 9.8 & 9.8 & 0.31 & \cellcolor{gray!20}0.37 \\
    8 & (100,30) & S  & 10 & I & 33.52\% & \cellcolor{gray!20}T  
    & 2.0  & 1.6 & 3.2 & 0.8   & \cellcolor{gray!20}1.0 
    & 2.0 & 1.6 & 2.6 & 0.22 & \cellcolor{gray!20}0.43
    & 2.0   & 1.6 & 1.6 & 0.19 & \cellcolor{gray!20}0.34 \\

    \bottomrule
    \end{tabular}
    
    \end{threeparttable}
\end{table}

\begin{table}[ht!]
    \centering
    \caption{Performance metrics for AS-DRO with Second-order Moment Ambiguity Sets}
    \label{tab:ASDRO_second_order_moment}
    \renewcommand{\arraystretch}{1.0}
    \setlength{\tabcolsep}{3pt}

    \begin{threeparttable}
    \begin{tabular}{ccccc ccccc ccccc ccccc}
    \toprule
    \multirow{2}{*}{Case} 
      & \multirow{2}{*}{Size} 
      & \multirow{2}{*}{$d(A)$} 
      & \multirow{2}{*}{$\Gamma$} 
      & \multirow{2}{*}{Var.} 
      & \multicolumn{5}{c}{Oracle 1} 
      & \multicolumn{5}{c}{Oracle 2} 
      & \multicolumn{5}{c}{Oracle 2 Enhanced} \\
      \cmidrule(lr){6-10} \cmidrule(lr){11-15} \cmidrule(lr){16-20}
      &  &  &  &
      & Iter. & Cuts & $|\hat{\mathcal{A}}|$ & $t_{\mathrm{sp}}$ & \cellcolor{gray!20}$t_{\mathrm{total}}$ & Iter. & Cuts & $|\hat{\mathcal{A}}|$
      & $t_{\mathrm{sp}}$ & \cellcolor{gray!20}$t_{\mathrm{total}}$ & Iter. & Cuts & $|\hat{\mathcal{A}}|$
      & $t_{\mathrm{sp}}$ & \cellcolor{gray!20}$t_{\mathrm{total}}$ \\
    \midrule
    1 & (10,3)   & D  & 2  & M & 
     4.8 & 6.2 & 31.0 & 211.07 & \cellcolor{gray!20}211.10 &
    2.8 & 4.0 & 9.6 & 0.38 & \cellcolor{gray!20}0.40 & 
    3.0 & 4.2 & 6.6 & 0.06 & \cellcolor{gray!20}0.08 \\
    2 & (10,3)   & D  & 2  & I & 
     2.2 & 2.8 & 14.0 & 74.09  & \cellcolor{gray!20}74.10 &
     3.0 & 4.6 & 9.4 & 0.33 & \cellcolor{gray!20}0.34 &
     2.0 & 3.0 & 5.4 & 0.04 & \cellcolor{gray!20}0.05 \\
    3 & (10,3)   & S  & 2  & M & 
     4.0 & 6.2 & 29.2 &  216.10 & \cellcolor{gray!20}216.12 &
     2.6 & 3.8 & 8.4 & 0.30 & \cellcolor{gray!20}0.32 & 
     3.0 & 4.6 & 4.8 & 0.05 & \cellcolor{gray!20}0.06 \\
    4 & (10,3)   & S  & 2  & I 
     & 2.0 & 2.6 & 13.0 & 97.16  &  \cellcolor{gray!20}97.17 
     & 2.8 & 4.2 & 8.6  & 0.28 & \cellcolor{gray!20}0.29 
     & 2.2 & 3.2 & 3.4 & 0.03 & \cellcolor{gray!20}0.04 \\
    5 & (100,20) & D  & 10 & M 
    & 6.2 & 17.0 & 80.2 & 661.46 & \cellcolor{gray!20}661.70 
    & 2.8 & 9.6 & 20.0 & 2.04 & \cellcolor{gray!20}2.15 
    & 3.8 & 12.6 & 13.8 & 0.74 & \cellcolor{gray!20}0.85 \\
    6 & (100,20) & D  & 10 & I 
    & 2.4 & 4.6 & 22.2 & 175.78 & \cellcolor{gray!20}176.16 
    & 2.6 & 6.2 & 11.6 & 1.06 & \cellcolor{gray!20}1.43 
    & 2.8 & 7.6 & 8.2  & 0.49 & \cellcolor{gray!20}0.85 \\
    7 & (100,20) & S  & 10 & M 
    & 5.6 & 22.0 & 103.0 & 803.08 & \cellcolor{gray!20}803.28 
    & 2.8 & 11.4 & 20.6 & 1.27 & \cellcolor{gray!20}1.35 
    & 3.6 & 15.8 & 17.8 & 0.49 & \cellcolor{gray!20}0.57 \\
    8 & (100,20) & S  & 10 & I 
    & 2.6 & 4.6 & 21.4 & 169.40 & \cellcolor{gray!20}169.94 
    & 2.4 & 6.0 & 9.8  & 0.66 & \cellcolor{gray!20}0.98 
    & 2.8 & 7.0 & 7.4 & 0.33 & \cellcolor{gray!20}0.66 \\
    \bottomrule
    \end{tabular}

    \begin{tablenotes}\footnotesize
    \item \textbf{Note:} For Tables~\ref{tab:ASDRO_first_order_moment} and~\ref{tab:ASDRO_second_order_moment}, T indicates a timeout at 3600 seconds and each row reports averages over 5 random seeds. Size reports (number of variables, number of constraints). D denotes 75\% positive and 5\% negative entries, while S denotes 40\% positive and 0\% negative entries. The parameter $\Gamma$ is the rowwise uncertainty budget. In the Var. column, M denotes instances with continuous decision variables and I denotes pure-integer instances.
    \end{tablenotes}
    \end{threeparttable}
\end{table}
\end{landscape}

\begin{landscape}
    \begin{table}[ht!]
    \centering
    \caption{Performance metrics for AS-DRO with $\ell_1$-norm Wasserstein Ambiguity Sets}
    \label{tab:ASDRO_first_order_was}
    \renewcommand{\arraystretch}{1.0}
    \setlength{\tabcolsep}{2.5pt}

    \begin{threeparttable}
    \begin{tabular}{ccccccccccccccccccccc}
    \toprule
    \multirow{2}{*}{Case} 
      & \multirow{2}{*}{$\theta$} 
      & \multirow{2}{*}{$N$} 
      & \multirow{2}{*}{Var.} 
      & \multicolumn{2}{c}{Reformulation} 
      & \multicolumn{5}{c}{Oracle 1} 
      & \multicolumn{5}{c}{Oracle 2} 
      & \multicolumn{5}{c}{Oracle~2 Enhanced$^*$} \\
    \cmidrule(lr){5-6} \cmidrule(lr){7-11} \cmidrule(lr){12-16} \cmidrule(lr){17-21}
      &  &  & 
      & Gap & \cellcolor{gray!20}$t_{\mathrm{total}}$
      & Iter. & Cuts & $|\hat{\mathcal{A}}|$
      & $t_{\mathrm{sp}}$ & \cellcolor{gray!20}$t_{\mathrm{total}}$ & Iter. & Cuts & $|\hat{\mathcal{A}}|$
      & $t_{\mathrm{sp}}$ & \cellcolor{gray!20}$t_{\mathrm{total}}$ & Iter. & Cuts & $|\hat{\mathcal{A}}|$
      & $t_{\mathrm{sp}}$ & \cellcolor{gray!20}$t_{\mathrm{total}}$ \\
    \midrule
    1 & 0.01 & 5  & M & 27.96\% & \cellcolor{gray!20}T 
    & 3.6 & 11.0 & 110 & 446.90 & \cellcolor{gray!20}447.01 
    & 4.2 & 12.8 & 13.4 & 1.05 & \cellcolor{gray!20}1.16 
    & 2.2 & 5.2 & 26 & 0.36 & \cellcolor{gray!20}0.45 \\
    2 & 0.01 & 30 & M & 30.06\% & \cellcolor{gray!20}T 
    & 4.8 & 13.6 & 816 & 672.96 & \cellcolor{gray!20}673.56 
    & 4.6 & 13.8 & 18.4 & 6.23 & \cellcolor{gray!20}6.78 
    & 2.2 & 5.2 & 156 & 2.06 & \cellcolor{gray!20}2.56 \\
    3 & 1  & 30 & M & 28.27\% & \cellcolor{gray!20}T 
    & 4.0 & 11.0 & 660 & 460.50 & \cellcolor{gray!20}461.12 
    & 4.0 & 11.4 & 168.8 & 6.17 & \cellcolor{gray!20}6.75 
    & 2.2 & 5.2 & 156 & 2.21 & \cellcolor{gray!20}2.77 \\
    4 & 1  & 60 & M & 27.84\% & \cellcolor{gray!20}T 
    & 4.2 & 12.0 & 1440 & 528.48 & \cellcolor{gray!20}529.75 
    & 4.2 & 12.0 & 324.8 & 11.70 & \cellcolor{gray!20}12.80 
    & 2.2 & 5.0 & 300 & 4.15 & \cellcolor{gray!20}5.18 \\
    5 & 0.01 & 5  & I & 0  & \cellcolor{gray!20}61.90 
    & 1.8 & 1.4 & 14 & 57.00 & \cellcolor{gray!20}57.10 
    & 1.8 & 1.4 & 2 & 0.26 & \cellcolor{gray!20}0.35 
    & 1.2 & 0.2 & 1 & 0.14 & \cellcolor{gray!20}0.24 \\
    6 & 0.01 & 30 & I & 0  & \cellcolor{gray!20}261.19 
    & 2.2 & 2.4 & 144 & 99.72 & \cellcolor{gray!20}100.28 
    & 2.2 & 2.4 & 5.4 & 2.07 & \cellcolor{gray!20}2.60 
    & 1.2 & 0.2 & 6 & 0.84 & \cellcolor{gray!20}1.34 \\
    7 & 1  & 30 & I & 0  & \cellcolor{gray!20}280.64 
    & 1.4 & 0.6 & 36 & 26.16 & \cellcolor{gray!20}26.69 
    & 1.4 & 0.6 & 13.2 & 1.11 & \cellcolor{gray!20}1.61 
    & 1.2 & 0.2 & 6 & 0.98 & \cellcolor{gray!20}1.48 \\
    8 & 1  & 60 & I & 0  & \cellcolor{gray!20}1073.97 
    & 1.2 & 0.2 & 24 & 11.41 & \cellcolor{gray!20}12.42 
    & 1.2 & 0.2 & 12 & 2.37 & \cellcolor{gray!20}3.37 
    & 1.0 & 0.0 & 0 & 1.83 & \cellcolor{gray!20}2.84 \\
    \bottomrule
    \end{tabular}

    \end{threeparttable}
\end{table}

\begin{table}[ht!]
    \centering
    \caption{Performance metrics for AS-DRO with $\ell_2$-norm Wasserstein Ambiguity Sets}
    \label{tab:ASDRO_second_order_was}
    \renewcommand{\arraystretch}{1.0}
    \setlength{\tabcolsep}{2.5pt}

    \begin{threeparttable}
    \begin{tabular}{ccccccccccccccccccccc}
    \toprule
    \multirow{2}{*}{Case} 
      & \multirow{2}{*}{$\theta$} 
      & \multirow{2}{*}{$N$} 
      & \multirow{2}{*}{Var.} 
      & \multicolumn{2}{c}{Reformulation} 
      & \multicolumn{5}{c}{Oracle 1} 
      & \multicolumn{5}{c}{Oracle 2} 
      & \multicolumn{5}{c}{Oracle~2 Enhanced$^*$} \\
    \cmidrule(lr){5-6} \cmidrule(lr){7-11} \cmidrule(lr){12-16} \cmidrule(lr){17-21}
      &  &  & 
      & Gap & \cellcolor{gray!20}$t_{\mathrm{total}}$
      & Iter. & Cuts & $|\hat{\mathcal{A}}|$ & $t_{\mathrm{sp}}$ & \cellcolor{gray!20}$t_{\mathrm{total}}$ & Iter. & Cuts & $|\hat{\mathcal{A}}|$
      & $t_{\mathrm{sp}}$ & \cellcolor{gray!20}$t_{\mathrm{total}}$ & Iter. & Cuts & $|\hat{\mathcal{A}}|$
      & $t_{\mathrm{sp}}$ & \cellcolor{gray!20}$t_{\mathrm{total}}$  \\
    \midrule
    1 & 0.01 & 5  & M & 24.87\% & \cellcolor{gray!20}T 
    & 3.8 & 5.2 & 51.2 & 311.88 & \cellcolor{gray!20}311.96 
    & 4.0 & 6.6 & 15.4 & 0.29 & \cellcolor{gray!20}0.32 
    & 2.0 & 3.0 & 15 & 0.09 & \cellcolor{gray!20}0.12 \\
    2 & 0.01 & 30 & M & 32.34\% & \cellcolor{gray!20}T 
    & 4.4 & 7.2 & 432.0 & 410.03 & \cellcolor{gray!20}410.37 
    & 4.0 & 7.8 & 71.0 & 1.71 & \cellcolor{gray!20}1.83 
    & 2.0 & 3.0 & 90 & 0.54 & \cellcolor{gray!20}0.65 \\
    3 & 1  & 30 & M & 29.05\% & \cellcolor{gray!20}T 
    & 4.0 & 7.0 & 408.8 & 288.64 & \cellcolor{gray!20}288.88 
    & 3.2 & 6.0 & 141.6 & 1.43 & \cellcolor{gray!20}1.59 
    & 2.0 & 3.0 & 90 & 0.53 & \cellcolor{gray!20}0.65 \\
    4 & 1  & 60 & M & 28.57\% & \cellcolor{gray!20}T 
    & 3.2 & 6.2 & 744.0 & 293.90 & \cellcolor{gray!20}294.32 
    & 3.0 & 5.8 & 265.0 & 2.70 & \cellcolor{gray!20}3.00 
    & 2.0 & 3.0 & 180 & 1.13 & \cellcolor{gray!20}1.36 \\
    5 & 0.01 & 5  & I & 0  & \cellcolor{gray!20}0.17 
    & 2.4 & 2.8 & 28.0 & 113.44 & \cellcolor{gray!20}113.48 
    & 2.4 & 2.8 & 12.2 & 0.21 & \cellcolor{gray!20}0.24 
    & 1.0 & 0.0 & 0 & 0.05 & \cellcolor{gray!20}0.07 \\
    6 & 0.01 & 30 & I & 0  & \cellcolor{gray!20}1.35 
    & 2.4 & 3.4 & 204.0 & 138.53 & \cellcolor{gray!20}138.71 
    & 2.4 & 3.4 & 64.6 & 1.25 & \cellcolor{gray!20}1.38 
    & 1.0 & 0.0 & 0 & 0.26 & \cellcolor{gray!20}0.40 \\
    7 & 1  & 30 & I & 0  & \cellcolor{gray!20}1.37 
    & 2.4 & 2.6 & 156.0 & 105.03 & \cellcolor{gray!20}105.20 
    & 2.4 & 2.6 & 78.4 & 1.20 & \cellcolor{gray!20}1.35 
    & 1.0 & 0.0 & 0 & 0.26 & \cellcolor{gray!20}0.38 \\
    8 & 1  & 60 & I & 0  & \cellcolor{gray!20}3.48 
    & 2.4 & 2.8 & 336.0 & 113.82 & \cellcolor{gray!20}114.15 
    & 2.4 & 2.8 & 159.2 & 2.43 & \cellcolor{gray!20}2.73 
    & 1.0 & 0.0 & 0 & 0.51 & \cellcolor{gray!20}0.74 \\
    \bottomrule
    \end{tabular}

    \begin{tablenotes}\footnotesize
    \item \textbf{Note:} For Tables~\ref{tab:ASDRO_first_order_was} and~\ref{tab:ASDRO_second_order_was}, T indicates a timeout at 3600 seconds and each row reports averages over 5 random seeds. The $\ell_1$ table has 50 variables and 10 constraints, while the $\ell_2$ table has 20 variables and 5 constraints; all cases use $\Gamma=5$ and $(d_p,d_n)=(0.3,0.1)$. $\theta$ is the Wasserstein radius and $N$ is the number of empirical sample points. In the Var. column, M denotes instances with continuous decision variables and I denotes pure-integer instances. The superscript $^*$ indicates that the empirical points defining $\hat{\mathbb P}_i$ are also used to initialize the scenario pool; this warm start does not otherwise alter the ambiguity set.
    \end{tablenotes}
    \end{threeparttable}
\end{table}    
\end{landscape}

\begin{landscape}
\begin{table}[ht!]
    \centering
    \caption{Performance metrics for DRCCP with First-order Moment Ambiguity Sets}
    \label{tab:DRCCP_first_order_moment}
    \renewcommand{\arraystretch}{1.0}
    \setlength{\tabcolsep}{2pt}

    \begin{threeparttable}
    \begin{tabular}{cccccc cc ccccc ccccc ccccc}
    \toprule
    \multirow{2}{*}{Case} 
      & \multirow{2}{*}{Size} 
      & \multirow{2}{*}{$d(A)$} 
      & \multirow{2}{*}{Model} 
      & \multirow{2}{*}{Type} 
      & \multirow{2}{*}{$I_{\max}$} 
      & \multicolumn{2}{c}{Reformulation} 
      & \multicolumn{5}{c}{Oracle 1} 
      & \multicolumn{5}{c}{Oracle 2} 
      & \multicolumn{5}{c}{Oracle 2 Enhanced} \\
      \cmidrule(lr){7-8} \cmidrule(lr){9-13} \cmidrule(lr){14-18} \cmidrule(lr){19-23}
      &  &  &  &  & 
      & Gap & \cellcolor{gray!20}$t_{\mathrm{total}}$ & Iter. & Cuts & $|\hat{\mathcal{A}}|$ 
      & $t_{\mathrm{sp}}$ & \cellcolor{gray!20}$t_{\mathrm{total}}$ & Iter. & Cuts & $|\hat{\mathcal{A}}|$
      & $t_{\mathrm{sp}}$ & \cellcolor{gray!20}$t_{\mathrm{total}}$ & Iter. & Cuts & $|\hat{\mathcal{A}}|$
      & $t_{\mathrm{sp}}$ & \cellcolor{gray!20}$t_{\mathrm{total}}$ \\
    \midrule
    1 & (20,5) & S & MIP & I & 1  & 0     & \cellcolor{gray!20}20.2  
    & 36.4 & 52.6 & 136 & 3.89 & \cellcolor{gray!20}4.22 
    & 5.2 & 9.4 & 10.2 & 0.05 & \cellcolor{gray!20}0.06 
    & 3.4 & 6.2 & 6.2 & 0.02 & \cellcolor{gray!20}0.03 \\
    2 & (40,5)   & D & IP &I & 1  & 0 & \cellcolor{gray!20}9.22 
    & 1.8 & 0.8  & 2.2   & 0.06  & \cellcolor{gray!20}0.09 
    & 1.8 & 1.0  & 1.0 & 0.02 & \cellcolor{gray!20}0.04 
    & 1.8 & 1.0  & 1.0  & 0.01 & \cellcolor{gray!20}0.03 \\
    3 & (200,30) & S & IP & I & 1  & 750\% & \cellcolor{gray!20}T    
     & 6.0 & 16.2 & 46.4  & 3.46  & \cellcolor{gray!20}9.83 
     & 5.4 & 24.8 & 25.6  & 0.67 & \cellcolor{gray!20}5.92 
     & 5.6 & 25.0 & 25.4 & 0.63 & \cellcolor{gray!20}4.94 \\
    4 & (20,5) & S & MIP & J & 2  & 0  & \cellcolor{gray!20}19.45 
    & 14.4 & 20.4 & 58.6 & 0.42 & \cellcolor{gray!20}0.47 
    & 4   & 5.6 & 5.8 & 0.04 & \cellcolor{gray!20}0.06 
    & 3.8 & 5.4 & 5.4 & 0.03 & \cellcolor{gray!20}0.04 \\
    5 & (40,5)   & D & IP & J & 2  & 0 & \cellcolor{gray!20}52.59 
    & 3.0 & 3.4  & 9.2   & 0.27 & \cellcolor{gray!20}0.36  
    & 3.2 & 3.8  & 4.0  & 0.06 & \cellcolor{gray!20}0.12 
    & 3.2 & 3.8  & 4.0  & 0.05 & \cellcolor{gray!20}0.11 \\
    6 & (200,30) & S & IP &J & 4  & 374\% & \cellcolor{gray!20}T    
    & 2.6 & 25.8 & 76.8  & 4.38  & \cellcolor{gray!20}4.97 
    & 2.8 & 28.6 & 29.0 & 0.95 & \cellcolor{gray!20}1.62 
    & 2.8 & 28.6 & 28.8 & 0.76 & \cellcolor{gray!20}1.45 \\
    7 & (200,30) & D & IP& C & 10 & --     & \cellcolor{gray!20}--    
    & 3.4 & 32.0 & 79.6 & 458.66 & \cellcolor{gray!20}463.15 
    & 3.4 & 32.0 & 32.0 & 3.79 & \cellcolor{gray!20}6.82 
    & 3.4 & 32.0 & 32.0 & 0.06 & \cellcolor{gray!20}6.19 \\
    8 & (40,5)   & D &IP& C & 5  & --     & \cellcolor{gray!20}--    
    & 3.4 & 31.0 & 92.0 & 13.92 & \cellcolor{gray!20}16.94 
    & 3.2 & 31.0 & 31.0 & 2.08 & \cellcolor{gray!20}5.21 
    & 3.2 & 31.0 & 31.0 & 0.01 & \cellcolor{gray!20}4.53 \\
    9 & (20,5) & S & MIP & C & 10 & --     & \cellcolor{gray!20}--    
    & 2.2 & 3.6 & 10.6 & 0.15 & \cellcolor{gray!20}0.17 
    & 2.2 & 4.0 & 4.0 & 0.04 & \cellcolor{gray!20}0.07 
    & 2.2 & 4.0 & 4.0 & 0.01 & \cellcolor{gray!20}0.06 \\
    \bottomrule
    \end{tabular}
    \begin{tablenotes}\footnotesize
    \item \textbf{Note:} For Table~\ref{tab:DRCCP_first_order_moment}, T indicates a timeout at 3600 seconds and a dash denotes an unavailable entry; each row reports averages over 5 random seeds. Size reports (number of decision variables, number of chance-constraint groups). D denotes 75\% positive and 5\% negative entries, while S denotes 40\% positive and 0\% negative entries; $\Gamma=5$. MIP and IP denote mixed-integer and pure-integer instances. Type denotes individual (I), joint (J), or combinatorial (C) chance constraints. Here $I_{\max}:=\max_{m\in[M]}I_m$ is the largest number of component constraints in a group.
    \end{tablenotes}
    \end{threeparttable}
\end{table}
\end{landscape}

\begin{landscape}
\begin{table}[ht!]
    \centering
    \caption{Performance metrics for DRCCP with Second-order Moment Ambiguity Sets}
    \label{tab:DRCCP_second_order_moment}
    \renewcommand{\arraystretch}{1.0}
    \setlength{\tabcolsep}{3pt}

    \begin{threeparttable}
    \begin{tabular}{ccc c c ccccc ccccc ccccc}
    \toprule
    \multirow{2}{*}{Case} 
      & \multirow{2}{*}{$d(A)$} 
      & \multirow{2}{*}{Model} 
      & \multirow{2}{*}{Type} 
      & \multirow{2}{*}{$I_{\max}$} 
      & \multicolumn{5}{c}{Oracle 1} 
      & \multicolumn{5}{c}{Oracle 2} 
      & \multicolumn{5}{c}{Oracle 2 Enhanced} \\
      \cmidrule(lr){6-10} \cmidrule(lr){11-15} \cmidrule(lr){16-20}
      &  &  &  &
      & Iter. & Cuts & $|\hat{\mathcal{A}}|$ & $t_{\mathrm{sp}}$ & \cellcolor{gray!20}$t_{\mathrm{total}}$ & Iter. & Cuts & $|\hat{\mathcal{A}}|$
      & $t_{\mathrm{sp}}$ & \cellcolor{gray!20}$t_{\mathrm{total}}$ & Iter. & Cuts & $|\hat{\mathcal{A}}|$
      & $t_{\mathrm{sp}}$ & \cellcolor{gray!20}$t_{\mathrm{total}}$ \\
    \midrule

1 & D & IP & I & 1 
  & 5.6 & 11.4 & 24.4& 146.51 & \cellcolor{gray!20}148.06 
  & 5.2 & 16.2 & 19.4& 0.44 & \cellcolor{gray!20}1.20 
  & 3.6 & 10.2 & 10.2& 0.26 & \cellcolor{gray!20}0.67  \\
2 & S & IP & I & 1
  & 5.0 & 18.2 & 50.6& 161.75 & \cellcolor{gray!20}162.59 
  & 4.6 & 22.2 & 25.8& 0.28 & \cellcolor{gray!20}0.69 
  & 4.4 & 20.0 & 20.2 & 0.21 & \cellcolor{gray!20}0.60 \\
3 & S & MIP & I & 1 
  & 4.4 & 15.2 & 44.0& 89.56 & \cellcolor{gray!20}90.13 
  & 4.4 & 20.0 & 22.0& 0.29 & \cellcolor{gray!20}0.72 
  & 4.0 & 18.4 & 18.6 & 0.19 & \cellcolor{gray!20}0.57 \\
4 & D & IP & J & 3
  & 5.8 & 19.4 & 30.6& 141.85 & \cellcolor{gray!20}144.99 
  & 6.4 & 25.4 & 30.4& 1.03 & \cellcolor{gray!20}2.97 
  & 5.8 & 19.0 & 19.0 & 0.75 & \cellcolor{gray!20}2.22 \\
5 & S & IP & J & 8
  & 2.2 & 17.0 & 30.4& 56.26 & \cellcolor{gray!20}56.77 
 & 2.6 & 19.0 & 20.6 & 0.55 & \cellcolor{gray!20}1.08 
  & 2.4 & 17.8 & 18.2 & 0.41 & \cellcolor{gray!20}0.80 \\
6 & S & MIP & J & 3
  & 3.0 & 11.0 & 22.4& 53.21 & \cellcolor{gray!20}53.79 
  & 3.0 & 14.2 & 16.0& 0.34 & \cellcolor{gray!20}0.89 
  & 2.6 & 12.2 & 12.2 & 0.23 & \cellcolor{gray!20}0.64 \\

7 & D & IP & C & 10
  & 6.2 & 24.4 & 41.0& 138.35 & \cellcolor{gray!20}154.18 
  & 6.2 & 25.0 & 25.6& 2.69 & \cellcolor{gray!20}12.09 
  & 6.2 & 24.0 & 24.2 & 2.28 & \cellcolor{gray!20}12.05 \\

8 & S & IP & C & 10
  & 4.0 & 20.6 & 31.6& 68.31 & \cellcolor{gray!20}114.69 
  & 3.8 & 21.4 & 23.4& 1.04 & \cellcolor{gray!20}43.82 
  & 3.8 & 20.2 & 20.6 & 0.83 & \cellcolor{gray!20}36.69 \\

9 & S & MIP & C & 5
  & 5.0 & 16.0 & 32.2& 119.37 &  \cellcolor{gray!20}123.90 
  & 4.4 & 18.4 & 20.8& 0.64 & \cellcolor{gray!20}3.60 
  & 4.2 & 14.8 & 15.0& 0.50 & \cellcolor{gray!20}3.15  \\

    \bottomrule
    \end{tabular}

    \begin{tablenotes}\footnotesize
    \item \textbf{Note:} For Table~\ref{tab:DRCCP_second_order_moment}, each row reports averages over 5 random seeds. All cases have 100 decision variables and 20 chance-constraint groups. D denotes 50\% positive and 10\% negative entries, while S denotes 20\% positive and 5\% negative entries; $\Gamma=10$. MIP and IP denote mixed-integer and pure-integer instances. Type denotes individual (I), joint (J), or combinatorial (C) chance constraints. Here $I_{\max}:=\max_{m\in[M]}I_m$ is the largest number of component constraints in a group.
    \end{tablenotes}
    \end{threeparttable}
\end{table}
\end{landscape}

\begin{landscape}
\begin{table}[ht!]
    \centering
    \caption{Performance metrics for DRCCP with $\ell_1$-norm Wasserstein Ambiguity Sets}
    \label{tab:DRCCP_first_order_was}
    \renewcommand{\arraystretch}{1.0}
    \setlength{\tabcolsep}{2pt}

    \begin{threeparttable}
    \begin{tabular}{cccccc cc ccccc ccccc ccccc}
    \toprule
    \multirow{2}{*}{Case} 
      & \multirow{2}{*}{Size} 
      & \multirow{2}{*}{$N$} 
      & \multirow{2}{*}{Model} 
      & \multirow{2}{*}{$I_{\max}$} 
      & \multirow{2}{*}{Type} 
      & \multicolumn{2}{c}{Reformulation} 
      & \multicolumn{5}{c}{Oracle 1} 
      & \multicolumn{5}{c}{Oracle 2} 
      & \multicolumn{5}{c}{Oracle~2 Enhanced$^*$} \\
      \cmidrule(lr){7-8} \cmidrule(lr){9-13} \cmidrule(lr){14-18} \cmidrule(lr){19-23}
      &  &  &  &  & 
      & Gap & \cellcolor{gray!20}$t_{\mathrm{total}}$
      & Iter. & Cuts & $|\hat{\mathcal{A}}|$& $t_{\mathrm{sp}}$ & \cellcolor{gray!20}$t_{\mathrm{total}}$ & Iter. & Cuts & $|\hat{\mathcal{A}}|$
      & $t_{\mathrm{sp}}$ & \cellcolor{gray!20}$t_{\mathrm{total}}$ & Iter. & Cuts & $|\hat{\mathcal{A}}|$
      & $t_{\mathrm{sp}}$ & \cellcolor{gray!20}$t_{\mathrm{total}}$  \\
    \midrule

1 & (100,10) & 30 & IP & 1  & I 
  & 66.4\% & \cellcolor{gray!20}T 
  & 2.4 & 1.8 & 102 & 72.4 & \cellcolor{gray!20}111.5 
  & 2.2 & 1.4 & 33.4& 3.6 & \cellcolor{gray!20}33.1 
  & 3.4 & 2.8 & 72.4& 5.1 & \cellcolor{gray!20}146.5  \\
2 & (20,4) & 5 & MIP & 1 & I
  & 3.16\% & \cellcolor{gray!20}T
  & 5.8 & 6.6 & 66& 0.84 & \cellcolor{gray!20}1.31 
  & 2.2 & 2.4 & 11.0& 0.44 & \cellcolor{gray!20}0.61
  & 2.2 & 2.4 & 11.0 & 0.38 & \cellcolor{gray!20}0.54 \\
3 & (40,4) & 10 & IP & 1 & I
  & 0 & \cellcolor{gray!20}8.1
  & 2.0 & 1.4 & 16& 8.5 & \cellcolor{gray!20}8.55 
  & 2.0 & 1.4 & 12.8& 0.32 & \cellcolor{gray!20}0.39 
  & 2.0 & 1.4 & 12.6& 0.25 & \cellcolor{gray!20}0.32 \\
4 & (100,10) & 30 & IP & 4 & J
  & -- & \cellcolor{gray!20}T
  & 1.8 & 2.2 & 120& 95.22 & \cellcolor{gray!20}277.11 
  & 1.8 & 2.2 & 52.0& 7.76 & \cellcolor{gray!20}80.80 
  & 2.0 & 3.4 & 84.6 & 7.14 & \cellcolor{gray!20}125.61 \\
5 & (20,4) & 5 & MIP & 4 & J
  & 0.13\% & \cellcolor{gray!20}T
  & 4.6 & 6.4 & 41.8& 33.57 & \cellcolor{gray!20}33.66 
  & 2.2 & 2.8 & 13.4& 0.25 & \cellcolor{gray!20}0.30 
  & 2.2 & 2.8 & 12.8& 0.16 & \cellcolor{gray!20}0.21  \\
6 & (40,4) & 10 & IP & 4 & J
  & 0 & \cellcolor{gray!20}314.4
  & 2.4 & 1.4 & 22.4& 33.37 & \cellcolor{gray!20}33.64 
  & 3.0 & 2.0 & 16.4& 0.89 & \cellcolor{gray!20}1.13 
  & 2.6 & 1.6 & 14.4& 0.55 & \cellcolor{gray!20}0.78  \\
7 & (100,10) & 5 & IP & 10 & C
  & -- & \cellcolor{gray!20}--
  & 4.4 & 5.2 & 40.2& 128.04 & \cellcolor{gray!20}157.44 
  & 4.6 & 5.4 & 25.6& 6.62 & \cellcolor{gray!20}26.43 
  & 5.4 & 6.2 & 28.8& 5.75 & \cellcolor{gray!20}27.41  \\
8 & (20,4) & 5 & MIP & 5 & C
  & -- & \cellcolor{gray!20}--
  & 6.2 & 8.4 & 59.2& 76.68 & \cellcolor{gray!20}76.79 
  & 2.4 & 2.4 & 10.8& 0.25 & \cellcolor{gray!20}0.29 
  & 2.0 & 2.0 & 9.0& 0.16 & \cellcolor{gray!20}0.20  \\
9 & (40,4) & 10 & IP & 10 & C
  & -- & \cellcolor{gray!20}--
  & 3.0 & 2.4 & 40& 68.13 & \cellcolor{gray!20}73.45 
  & 3.0 & 2.4 & 19.2& 1.87 & \cellcolor{gray!20}9.93 
  & 3.0 & 2.4 & 22.0& 1.39 & \cellcolor{gray!20}6.92  \\

    \bottomrule
    \end{tabular}
    \begin{tablenotes}\footnotesize \item \textbf{Note:} For Table~\ref{tab:DRCCP_first_order_was}, T indicates a timeout at 3600 seconds and a dash denotes an unavailable entry; each row reports averages over 5 random seeds. Size reports (number of decision variables, number of chance-constraint groups). Matrices have 30\% positive and 10\% negative entries, $\Gamma=5$, and $\theta=5$; $N$ is the number of empirical sample points. MIP and IP denote mixed-integer and pure-integer instances. Type denotes individual (I), joint (J), or combinatorial (C) chance constraints, and $I_{\max}:=\max_{m\in[M]}I_m$. The superscript $^*$ indicates that the empirical points defining the relevant empirical distribution are also used to initialize the scenario pool; this warm start does not otherwise alter the ambiguity set. \end{tablenotes}
    \end{threeparttable}
\end{table}

\end{landscape}

\begin{landscape}
    \begin{table}[ht!]
    \centering
    \caption{Performance metrics for DRCCP with $\ell_2$-norm Wasserstein Ambiguity Sets}
    \label{tab:DRCCP_second_order_was}
    \renewcommand{\arraystretch}{1.0}
    \setlength{\tabcolsep}{2pt}

    \begin{threeparttable}
    \begin{tabular}{cccccc cc ccccc ccccc ccccc}
    \toprule
    \multirow{2}{*}{Case} 
      & \multirow{2}{*}{Size} 
      & \multirow{2}{*}{$N$} 
      & \multirow{2}{*}{Model} 
      & \multirow{2}{*}{$I_{\max}$} 
      & \multirow{2}{*}{Type} 
      & \multicolumn{2}{c}{Reformulation} 
      & \multicolumn{5}{c}{Oracle 1} 
      & \multicolumn{5}{c}{Oracle 2} 
      & \multicolumn{5}{c}{Oracle~2 Enhanced$^*$} \\
      \cmidrule(lr){7-8} \cmidrule(lr){9-13} \cmidrule(lr){14-18} \cmidrule(lr){19-23}
      &  &  &  &  & 
      & Gap & \cellcolor{gray!20}$t_{\mathrm{total}}$
      & Iter. & Cuts & $|\hat{\mathcal{A}}|$& $t_{\mathrm{sp}}$ & \cellcolor{gray!20}$t_{\mathrm{total}}$ & Iter. & Cuts & $|\hat{\mathcal{A}}|$
      & $t_{\mathrm{sp}}$ & \cellcolor{gray!20}$t_{\mathrm{total}}$ & Iter. & Cuts & $|\hat{\mathcal{A}}|$
      & $t_{\mathrm{sp}}$ & \cellcolor{gray!20}$t_{\mathrm{total}}$  \\
    \midrule

1 & (100,10) & 30 & IP & 1  & I 
  & 53.8\% & \cellcolor{gray!20}T 
  & 3 & 3 & 162.6& 112.5 & \cellcolor{gray!20}135.9 
  & 3 & 2.8 & 75.8& 6.2 & \cellcolor{gray!20}23.6 
  & 2.8 & 2.6 & 74.8 & 3.7 & \cellcolor{gray!20}25 \\

2 & (20,4) & 5 & MIP & 1 & I
  & 0.93\% & \cellcolor{gray!20}T
  & 3.2 & 2.8 & 25.6& 0.2 & \cellcolor{gray!20}0.2 
  & 9.2 & 8.4 & 41.6& 0.6 & \cellcolor{gray!20}0.7 
  & 9.2 & 8.4 & 41.6 & 0.5 & \cellcolor{gray!20}0.6 \\

3 & (40,4) & 20 & IP & 1 & I
  & 0 & \cellcolor{gray!20}9.7
  & 2 & 1.2 & 33.6& 19.6 & \cellcolor{gray!20}19.7 
  & 2.4 & 1.6 & 28.2& 0.7 & \cellcolor{gray!20}0.9 
  & 2 & 1.2 & 22.2 & 0.5 & \cellcolor{gray!20}0.7 \\

4 & (100,10) & 30 & IP & 4 & J
  & -- & \cellcolor{gray!20}--
  & 2 & 2.8 & 150.8& 121.5 & \cellcolor{gray!20}195.5 
  & 2 & 3 & 72.4& 6.4 & \cellcolor{gray!20}97.1 
  & 2 & 3 & 78.8& 4.8 & \cellcolor{gray!20}64.1 \\

5 & (20,4) & 5 & MIP & 4 & J
  & -- & \cellcolor{gray!20}--
  & 2.4 & 1.6 & 12.6 & 8.5 & \cellcolor{gray!20}8.6 
  & 13.2 & 12.6 & 51.6 & 1.2 & \cellcolor{gray!20}1.3 
  & 14 & 13.4 & 54.6 & 1.0 & \cellcolor{gray!20}1.2 \\

6 & (40,4) & 10 & IP & 4 & J
  & -- & \cellcolor{gray!20}--
  & 2.8 & 2 & 37.2 & 65.8 & \cellcolor{gray!20}66.0 
  & 2.8 & 2 & 17.2 & 0.9 & \cellcolor{gray!20}1.2 
  & 2.8 & 2 & 18.2 & 0.6 & \cellcolor{gray!20}0.9 \\

7 & (100,10) & 5 & IP & 10 & C
  & -- & \cellcolor{gray!20}--
  & 3.4 & 4.6 & 36.2& 103.8 & \cellcolor{gray!20}184.2 
  & 4.2 & 5.2 & 22.6& 4.5 & \cellcolor{gray!20}103.4 
  & 4 & 4.8 & 21.6 & 3.2 & \cellcolor{gray!20}79.5 \\

8 & (20,4) & 5 & MIP & 5 & C
  & -- & \cellcolor{gray!20}--
  & 2.8 & 2.2 & 19.2& 0.6 & \cellcolor{gray!20}0.7 
  & 14.6 & 14.2 & 67.4& 1.2 & \cellcolor{gray!20}1.3 
  & 14.6 & 14.2 & 67 & 0.9 & \cellcolor{gray!20}1.0 \\

9 & (40,4) & 10 & IP & 10 & C
  & -- & \cellcolor{gray!20}--
  & 2.2 & 1.6 & 28.6& 43.1 & \cellcolor{gray!20}44.8 
  & 2.4 & 1.6 & 13.8& 1.1 & \cellcolor{gray!20}2.9 
  & 2.2 & 1.4 & 13 & 0.8 & \cellcolor{gray!20}2.3 \\ 

    \bottomrule
    \end{tabular}
    \begin{tablenotes}\footnotesize \item \textbf{Note:} For Table~\ref{tab:DRCCP_second_order_was}, T indicates a timeout at 3600 seconds and a dash denotes an unavailable entry; each row reports averages over 5 random seeds. Size reports (number of decision variables, number of chance-constraint groups). Matrices have 30\% positive and 10\% negative entries, $\Gamma=5$, and $\theta=2$; $N$ is the number of empirical sample points. MIP and IP denote mixed-integer and pure-integer instances. Type denotes individual (I), joint (J), or combinatorial (C) chance constraints, and $I_{\max}:=\max_{m\in[M]}I_m$. The superscript $^*$ indicates that the empirical points defining the relevant empirical distribution are also used to initialize the scenario pool; this warm start does not otherwise alter the ambiguity set. \end{tablenotes}
    \end{threeparttable}
\end{table}
\end{landscape}

\appendix
\section{Proofs and Supplementary Formulations}
\subsection{Proof of Theorem~\ref{linear-robust}: RO Reformulation}
\label{app:proof-thm32}
\begin{proof}
Fix $m \in [M]$ and $\bm x\in\mathcal X$. Whenever the $m$th robust row is feasible, its inner supremum is finite. The conic Slater condition then yields strong duality and dual attainment:
\[
\sup_{\bm a_m\in\mathcal A_m}\bm a_m^\top\bm x
=\min_{\substack{\bm p_m\in\mathcal K_m^*\\
\bm D_m^\top\bm p_m=\bm x}}
\bm p_m^\top\bm d_m.
\]
Thus a feasible robust row admits the certificate in~\eqref{eq_RO_duality_reform}. Conversely, any $\bm p_m\in\mathcal K_m^*$ satisfying $\bm D_m^\top\bm p_m=\bm x$ and $\bm p_m^\top\bm d_m\le b_m$ implies the robust row by conic weak duality. Applying this equivalence to every $m \in [M]$ proves~\eqref{eq_RO_duality_reform}.
\end{proof}

\subsection{Proof of Theorem~\ref{linear-dro}: Standard DRO Reformulation}
\label{app:proof-thm34}
\begin{proof}
\textbf{Step 1: Dualization over the probability measure.}
Fix $m \in [M]$ and $\bm x \in \mathcal X$. Let $\mathfrak C_m$ be the cone of finite nonnegative measures $\mu$ on $(\mathcal A_m,\mathcal F_m)$ such that $\bm e_{mt}^\top\bm a_m$ is $\mu$-integrable for every $t\in[T_m]$. Assumption~\ref{assumption:regularity}(ii) then also ensures the integrability of $\bm a_m^\top\bm x$. The inner DRO problem is
\begin{equation}\label{primal_continuous}
\begin{aligned}
\sup_{\mathbb P_m\in\mathfrak C_m}\quad
&\int_{\mathcal A_m}\bm a_m^\top\bm x\,d\mathbb P_m \\
\text{s.t.}\quad
&\mathbb P_m(\mathcal A_m)=1,\quad
\int_{\mathcal A_m}\bm e_{mt}^\top\bm a_m\,d\mathbb P_m\le\gamma_{mt},
\quad t\in[T_m].
\end{aligned}
\end{equation}
Define $\mathcal L_m\mu:=\bigl(\mu(\mathcal A_m),(\int_{\mathcal A_m}\bm e_{mt}^\top\bm a_m\,d\mu)_{t\in[T_m]}\bigr)$, $\bm r_m:=(1,\bm\gamma_m^\top)^\top$, and $\mathcal Q_m:=\{0\}\times\mathbb R_-^{T_m}$. The constraints in~\eqref{primal_continuous} are $\mathcal L_m\mu-\bm r_m\in\mathcal Q_m$. If $\bar{\mathbb P}_m$ is the strictly feasible distribution from Theorem~\ref{linear-dro}, then, for every $\bm y$ sufficiently close to $\bm r_m$, $y_0>0$ and $\mu=y_0\bar{\mathbb P}_m$ satisfies $\mathcal L_m\mu-\bm y\in\mathcal Q_m$. Hence $\bm r_m\in\operatorname{int}[\mathcal L_m(\mathfrak C_m)-\mathcal Q_m]$. Proposition~3.4 of \cite{shapiro2001duality} therefore gives zero duality gap and, whenever the common value is finite, dual attainment. The dual is
\begin{equation}\label{dual-continuous}
\begin{aligned}
\inf_{\substack{\alpha_m\in\mathbb R\\\bm\beta_m\in\mathbb R_+^{T_m}}}\quad
&\alpha_m+\bm\beta_m^\top\bm\gamma_m \\
\text{s.t.}\quad
&\alpha_m+\sum_{t\in[T_m]}\beta_{mt}\bm e_{mt}^\top\bm a_m
\ge\bm a_m^\top\bm x,\quad \forall \bm a_m\in\mathcal A_m.
\end{aligned}
\end{equation}

\textbf{Step 2: Dualization over the sample space.}
If the $m$th DRO row is feasible, the moment problem has a finite value, so~\eqref{dual-continuous} attains an optimum $(\alpha_m,\bm\beta_m)$. Its pointwise constraint gives an upper bound of $\alpha_m$ on the sample-space supremum. By the conic Slater condition and Appendix~\ref{app:proof-thm32}, this finite supremum satisfies
\[
\sup_{\bm a_m\in\mathcal A_m}
\bm a_m^\top(\bm x-\bm E_m^\top\bm\beta_m)
=\min_{\substack{\bm p_m\in\mathcal K_m^*\\
\bm D_m^\top\bm p_m=\bm x-\bm E_m^\top\bm\beta_m}}
\bm p_m^\top\bm d_m.
\]
Because $\bm p_m^\top\bm d_m\le\alpha_m$ and $\alpha_m+\bm\beta_m^\top\bm\gamma_m\le b_m$, the minimizing $\bm p_m$ gives the certificate in~\eqref{eq_DRO_reform}. Conversely, given any certificate in~\eqref{eq_DRO_reform}, setting $\alpha_m:=\bm p_m^\top\bm d_m$ makes $(\alpha_m,\bm\beta_m)$ feasible in~\eqref{dual-continuous} by conic weak duality; moment weak duality then implies the original DRO row. Applying both directions to every $m\in[M]$ proves~\eqref{eq_DRO_reform}.
\end{proof}

\subsection{Proof of Theorem~\ref{thm:discretize_unbounded}: Finite-Support Approximation}
\label{app:proof-thm3}
\begin{proof}
For each integer $n \geq 1$, define the truncation set $T_n := \{\bm{a} \in \mathcal{A} : |h(\bm{a})| \leq n\}$, so that $T_n \uparrow \mathcal{A}$ as $n \to \infty$. Since $|h| \in L^1(\mathbb{P})$, the dominated convergence theorem yields $\mathbb{P}(T_n) \to 1$ and $\int_{\mathcal{A} \setminus T_n} |h| \, d\mathbb{P} \to 0$. We restrict attention to $n$ large enough that $\mathbb{P}(T_n) > 0$.

On $T_n$, the function $h$ takes values in $[-n, n]$. Partition this interval into $K_n = 2n^2$ subintervals $\{I_k^{(n)}\}_{k=1}^{K_n}$ of equal length $1/n$, inducing a measurable partition $B_k^{(n)} := \{\bm{a} \in T_n : h(\bm{a}) \in I_k^{(n)}\}$ of $T_n$. For each $k$ with $\mathbb{P}(B_k^{(n)}) > 0$, pick any representative $\bm{a}_k^{(n)} \in B_k^{(n)}$; cells with zero probability are omitted. Let $\mathcal{A}_n^* := \{\bm{a}_k^{(n)} : \mathbb{P}(B_k^{(n)}) > 0\}$, set weights $w_k^{(n)} := \mathbb{P}(B_k^{(n)})/\mathbb{P}(T_n)$ so that $\sum_k w_k^{(n)} = 1$, and define the finitely supported probability measure $\mathbb{P}_n \;:=\; \sum_{\bm{a}_k^{(n)} \in \mathcal{A}_n^*} w_k^{(n)}\, \delta_{\bm{a}_k^{(n)}}$.

Since $h(\bm{a})$ and $h(\bm{a}_k^{(n)})$ both lie in $I_k^{(n)}$ whenever $\bm{a} \in B_k^{(n)}$, we have $|h(\bm{a}) - h(\bm{a}_k^{(n)})| \leq 1/n$ on each $B_k^{(n)}$. Noting that
\[
\mathbb{P}(T_n) \cdot \mathbb{E}_{\mathbb{P}_n}[h] \;=\; \sum_k \mathbb{P}(B_k^{(n)})\, h(\bm{a}_k^{(n)}) \;=\; \sum_k \int_{B_k^{(n)}} h(\bm{a}_k^{(n)}) \, d\mathbb{P}(\bm{a}),
\]
we obtain
\[
\left| \int_{T_n} h \, d\mathbb{P} \;-\; \mathbb{P}(T_n) \cdot \mathbb{E}_{\mathbb{P}_n}[h] \right| \;\leq\; \sum_k \int_{B_k^{(n)}} \bigl|h(\bm{a}) - h(\bm{a}_k^{(n)})\bigr|\, d\mathbb{P}(\bm{a}) \;\leq\; \frac{1}{n}.
\]
Decomposing $\mathbb{E}_{\mathbb{P}}[h] = \int_{T_n} h \, d\mathbb{P} + \int_{\mathcal{A} \setminus T_n} h \, d\mathbb{P}$ and applying the triangle inequality gives
\[
\bigl| \mathbb{E}_{\mathbb{P}}[h] - \mathbb{P}(T_n) \cdot \mathbb{E}_{\mathbb{P}_n}[h] \bigr| \;\leq\; \int_{\mathcal{A} \setminus T_n} |h| \, d\mathbb{P} \;+\; \frac{1}{n}.
\]
Moreover, since $|h(\bm{a}_k^{(n)})| \leq n$ for all $k$, we have $|\mathbb{E}_{\mathbb{P}_n}[h]| \leq n$, and using $|h| > n$ on $\mathcal{A}\setminus T_n$,
\[
\bigl(1 - \mathbb{P}(T_n)\bigr) \cdot |\mathbb{E}_{\mathbb{P}_n}[h]| \;\leq\; n \cdot \mathbb{P}(\mathcal{A}\setminus T_n) \;\leq\; \int_{\mathcal{A}\setminus T_n} |h|\, d\mathbb{P}.
\]
Combining the last two displays with the triangle inequality,
\[
\bigl|\mathbb{E}_{\mathbb{P}}[h] - \mathbb{E}_{\mathbb{P}_n}[h]\bigr| \;\leq\; 2\int_{\mathcal{A}\setminus T_n} |h|\, d\mathbb{P} \;+\; \frac{1}{n} \;\longrightarrow\; 0,
\]
which completes the proof.
\end{proof}

\subsection{Master Problem Reformulations 
(Section~\ref{BiCS Master Problem})}
\label{appendix:master}

The inner maximization in the master problem 
\eqref{MasterProblem} optimizes the probability measure $\mathbb P_m$, represented by the weight vector $\bm P_m:=(P_{m0},\ldots,P_{mK_m})^\top\in\mathbb R_+^{K_m+1}$, over the finite scenario set 
$\hat{A}_m = \{\bm{a}_{mk}\}_{k=0}^{K_m}$. By Step~1 of Algorithm~\ref{BiCSalgorithm}, $\mathcal P_m(\hat A_m)$ is nonempty initially and remains nonempty as $\hat A_m$ expands. Since the ambiguity set 
$\mathcal{P}_m$ (Definition~\ref{ambiguity_set_define}) imposes only 
linear constraints on the probability weights, this inner problem 
takes the following primal--dual pair for each constraint $m \in [M]$.

\noindent\textbf{Primal (inner maximization):}
\begin{equation} \label{primal-discrete}
\begin{aligned}
    \max_{\bm P_m} \quad
    & \sum_{k=0}^{K_m}P_{mk}f_m(\bm a_{mk},\bm x) \\
    \text{s.t.} \quad
    & \sum_{k=0}^{K_m}P_{mk}=1, \qquad
      \sum_{k=0}^{K_m}P_{mk}g_t(\bm a_{mk})\le\gamma_{mt},
      \quad \forall t\in[T_m].
\end{aligned}
\end{equation}

\noindent\textbf{Dual:}\; For $\alpha_m\in\mathbb R$ and $\bm\beta_m\in\mathbb R_+^{T_m}$,
\begin{equation} \label{dual-discrete}
\begin{aligned}
    \min_{\alpha_m,\,\bm\beta_m} \quad
    & \alpha_m+\bm\beta_m^\top\bm\gamma_m \\
    \text{s.t.} \quad
    & \alpha_m+\sum_{t\in[T_m]}\beta_{mt}g_t(\bm a_{mk})
      \ge f_m(\bm a_{mk},\bm x),
      \quad \forall k=0,\ldots,K_m.
\end{aligned}
\end{equation}

The linear structure of the probability measure allows us to apply 
standard optimality-based reformulations to eliminate the inner 
maximization and obtain a single-level representation of the master 
problem with an embedded worst-case expected value. We present two such reformulations below.

\medskip
\noindent\textbf{Dual reformulation.}\; 
By substituting the dual formulation \eqref{dual-discrete} into the 
left-hand side of constraint \eqref{inner_max_constraint} and imposing
$\alpha_m+\bm\beta_m^\top\bm\gamma_m\le b_m$ for each $m\in[M]$, the inner
maximization can be eliminated explicitly. This substitution reduces 
the master problem with an embedded worst-case expected value \eqref{MasterProblem} to a single-level 
optimization problem that can be solved using standard commercial 
solvers. Although formulation \eqref{dual-discrete} may appear similar 
to the continuous dual formulation \eqref{dual-continuous}, the two are 
fundamentally different. The discrete dual \eqref{dual-discrete} is 
well suited for an iterative algorithm, as the scenario set 
$\hat{A}_m$ is a finite subset of the sample space generated by the 
subproblem and can be enumerated explicitly. In contrast, the 
continuous dual formulation \eqref{dual-continuous} requires 
enumeration over the entire sample space, which is intractable.

\medskip
\noindent\textbf{KKT reformulation.}\; 
Alternatively, the inner maximization can be reformulated using the 
KKT conditions \cite{karush1939minima, kuhn2013nonlinear}. For linear 
programs, the KKT conditions provide necessary and sufficient 
conditions for optimality, comprising primal feasibility 
\eqref{primal-discrete}, dual feasibility \eqref{dual-discrete}, and 
the following complementary slackness conditions:
\begin{subequations} \label{complementary-slackness}
\begin{align}
    & P_{mk} \!\left( \alpha_m + \sum_{t \in [T_m]} \beta_{mt}\, 
      g_t(\bm{a}_{mk}) - f_m(\bm{a}_{mk}, \bm{x}) \right) = 0, 
      \quad \forall k=0,\ldots,K_m,\; \forall m \in [M], \\
    & \beta_{mt} \!\left( \sum_{k=0}^{K_m} P_{mk}\, 
      g_t(\bm{a}_{mk}) - \gamma_{mt} \right) = 0, 
      \quad \forall t \in [T_m],\; \forall m \in [M].
\end{align}
\end{subequations}
Any solution satisfying these conditions is optimal for the inner 
linear program, so enforcing them together with
$\sum_{k=0}^{K_m}P_{mk}f_m(\bm a_{mk},\bm x)\le b_m$ for each $m\in[M]$
eliminates the inner maximization operator. When valid finite bounds are available, the bilinear complementary slackness terms can then be 
linearized via a standard Big-$M$ technique, yielding a single-level 
mixed-integer formulation.

\subsection{Scenario Reduction Model}\label{appendix:scenario_reduction}

Let $\mathcal I_m^{\mathrm{new}}\subseteq\{0,\ldots,K_m\}$ index the scenarios generated in the current oracle call at $\bm x^*$, where $\hat A_m=\{\bm a_{mk}\}_{k=0}^{K_m}$. The following model preserves the returned oracle value $\hat\Omega_m(\bm x^*)$ using the fewest new scenarios:
\begin{align*}
\min_{\bm P_m,\,\bm y} \quad & \sum_{k\in\mathcal I_m^{\mathrm{new}}} y_k\\
\text{s.t.} \quad 
& \sum_{k=0}^{K_m} P_{mk}f_m(\bm a_{mk},\bm x^*)=\hat\Omega_m(\bm x^*),
  \qquad \sum_{k=0}^{K_m}P_{mk}=1, \\
& \sum_{k=0}^{K_m}P_{mk}g_t(\bm a_{mk})\le\gamma_{mt},
  \quad \forall t\in[T_m], \qquad
  P_{mk}\ge0, \quad \forall k=0,\ldots,K_m, \\
& P_{mk}\le y_k,\quad y_k\in\{0,1\},
  \quad \forall k\in\mathcal I_m^{\mathrm{new}}.
\end{align*}
Here, $y_k$ indicates whether new scenario $\bm a_{mk}$ is retained; the existing support remains available because the linking constraints apply only to $k\in\mathcal I_m^{\mathrm{new}}$. The value equality preserves the returned oracle value, while the objective minimizes the number of retained new scenarios. Only new scenarios with $y_k^*=1$ are added to the master problem.

\bibliographystyle{abbrv}
\bibliography{ref}

\end{document}